\newcommand{\rrVert}{\Vert}
\newcommand{\rrvert}{\vert}
\newcommand{\llVert}{\Vert}
\newcommand{\llvert}{\vert}
\newtheorem{teo}{Theorem}
\newtheorem{lem}[teo]{Lemma}
\newtheorem{prop}[teo]{Proposition}
\newtheorem{cor}[teo]{Corollary}
\begin{document}
\begin{frontmatter}

\title{On a functional contraction method}
\runtitle{On a functional contraction method}

\begin{aug}
\author[A]{\fnms{Ralph} \snm{Neininger}\corref{}\ead[label=e1]{neiningr@math.uni-frankfurt.de}}
\and
\author[A]{\fnms{Henning} \snm{Sulzbach}\ead[label=e2]{henning.sulzbach@gmail.com}}
\runauthor{R. Neininger and H. Sulzbach}
\affiliation{Goethe University Frankfurt}
\address[A]{Institute for Mathematics\\
Goethe University Frankfurt\\
60054 Frankfurt am Main\\
Germany\\
\printead{e1}\\
\phantom{E-mail:\ }\printead*{e2}} 
\end{aug}

\received{\smonth{4} \syear{2013}}
\revised{\smonth{2} \syear{2014}}

%
\begin{abstract}
Methods for proving functional limit laws are developed for sequences
of stochastic processes which allow a recursive distributional
decomposition either in time or space. Our approach is an extension of
the so-called contraction method to the space $\mathcal{C}[0,1]$ of continuous
functions endowed with uniform topology and the space $\mathcal
{D}[0,1]$ of c\`
{a}dl\`{a}g functions with the Skorokhod topology. The contraction
method originated from the probabilistic analysis of algorithms and
random trees where characteristics satisfy natural distributional
recurrences. It is based on stochastic fixed-point equations, where
probability metrics can be used to obtain contraction properties and
allow the application of Banach's fixed-point theorem. We develop the
use of the Zolotarev metrics on the spaces $\mathcal{C}[0,1]$ and
$\mathcal{D}[0,1]$ in this
context. Applications are given, in particular, a short proof of
Donsker's functional limit theorem is derived and recurrences arising
in the probabilistic analysis of algorithms are discussed.
\end{abstract}

%
\begin{keyword}[class=AMS]
\kwd[Primary ]{60F17}
\kwd{68Q25}
\kwd[; secondary ]{60G18}
\kwd{60C05}
\end{keyword}
\begin{keyword}
\kwd{Functional limit theorem}
\kwd{contraction method}
\kwd{recursive distributional equation}
\kwd{Zolotarev metric}
\kwd{Donsker's invariance principle}
\end{keyword}

\end{frontmatter}

\section{Introduction} \label{secintro1}
The contraction method is an approach for proving convergence in
distribution for sequences of random variables which satisfy recurrence
relations in distribution. Such recurrence relations for a sequence
$(Y_n)_{n\ge0}$ are often of the form
%
%
\begin{equation}
\label{rec1} Y_n \stackrel{d} {=} \sum
_{r=1}^K A_r(n) Y_{I_r^{(n)}}^{(r)}
+ b(n),\qquad n \geq n_0,
\end{equation}
where $\stackrel{d}{=}$ denotes that the left-hand side and right-hand
side are identically distributed, and $(Y_j^{(r)})_{j\ge0}$ have the
same distribution as $(Y_n)_{n\ge0}$ for all $r=1,\ldots, K$, where
$K\ge1$ and $n_0\ge0$ are fixed integers. Moreover,
$I^{(n)}=(I_1^{(n)},\ldots,I_K^{(n)})$ is a vector of random integers
in $\{0,\ldots,n\}$. The\vspace*{1pt} basic independence assumption that fixes the
distribution of the right-hand side is that $(Y_j^{(1)})_{j\ge0},\ldots, (Y_j^{(K)})_{j\ge0}$ and $(A_1(n),\ldots,A_K(n),b(n),I^{(n)})$
are independent. Note, however, that dependencies between the
coefficients $A_r(n)$, $b(n)$ and the integers $I_r^{(n)}$ are allowed.

Recurrences of the form~(\ref{rec1}) come up in diverse fields, for
example, in the study of random trees, the probabilistic analysis of
recursive algorithms, in branching processes, in the context of random
fractals and in models from stochastic geometry where a recursive
decomposition can be found, as well as in information and coding
theory. For surveys of such occurrences, see \cite{RoRu01,NeRu04,Ne04}.
In some applications, one may need $K$ to depend on $n$ or the
case $K=\infty$, where generalizations of the results for our case of
fixed $K$ can be stated; cf. \cite{NeRu04}, Section~4.3, for such
extensions in the finite-dimensional case.

The sequence $(Y_n)_{n\ge0}$ satisfying~(\ref{rec1}) often is a
sequence of real random variables with real coefficients $A_r(n)$,
$b(n)$. However, the same recurrence appears also for sequences of
random vectors $(Y_n)_{n\ge0}$ in $\mathbb{R}^d$. Then the $A_r(n)$ are
random linear maps from $\mathbb{R}^d$ to $\mathbb{R}^d$ and $b(n)$
is a random
vector in $\mathbb{R}^d$. We will also review below work that considered
random sequences $(Y_n)_{n\ge0}$ into a separable Hilbert space
satisfying~(\ref{rec1}) where $A_r(n)$ become random linear operators
on the space and $b(n)$ a random vector in the Hilbert space. In the
present work, we develop a limit theory for such sequences in separable
Banach spaces, where our main applications are first to the space
$\mathcal{C}[0,1]$
endowed with the uniform topology. Secondly, although not a Banach
space, we will also be able to cover the space
$\mathcal{D}[0,1]$ equipped with the Skorokhod topology. Hence, we consider
sequences $(Y_n)_{n\ge0}$ of stochastic processes with state space
$\mathbb{R}$ and time parameter $t\in[0,1]$ with continuous, respectively,
c{\'a}dl{\'a}g paths and are interested in conditions that together with~(\ref{rec1}) allow to deduce functional limit theorems for rescaled
versions of $(Y_n)_{n\ge0}$.

For functions $f\in\mathcal{C}[0,1]$ or $f\in\mathcal{D}[0,1]$, we
denote the uniform norm by
\[
\|f\|_\infty:= \sup_{x\in[0,1]}\bigl|f(x)\bigr|.
\]
For functions $f,g \in\mathcal{D}[0,1]$, the Skorokhod distance
$d_{\mathrm{sk}}(f,g)$ is
used; see Section~\ref{subsecdsk}.

The rescaling of the process $(Y_n)_{n\ge0}$ can be done by centering
and normalization by the order of the standard deviation in case
moments of sufficient order are available. Subsequently, we assume that
the scaling has already been done and we denote the scaled process by
$(X_n)_{n\ge0}$. Note that affine scalings of the $Y_n$ implies that
the sequence $(X_n)_{n\ge0}$ also does satisfy a recurrence of type
(\ref{rec1}), where only the coefficients are changed:
%
%
\begin{equation}
\label{rec2} X_n \stackrel{d} {=} \sum
_{r=1}^K A_r^{(n)}
X_{I_r^{(n)}}^{(r)} + b^{(n)}, \qquad n \geq
n_0
\end{equation}
with conditions on identical distributions and independence similar to
recurrence~(\ref{rec1}). The coefficients $A_r^{(n)}$ and $b^{(n)}$ in
the modified recurrence~(\ref{rec2}) are typically directly computable
from the original coefficients $A_r(n)$, $b(n)$ and the \mbox{scaling} used;
see, for example, for the case of random vectors in $\mathbb{R}^d$,
\cite{NeRu04}, equation~(4).
Subsequently, we consider equations of type~(\ref{rec2}) together with
assumptions on the moments of $X_n$ which in applications have to be
obtained by an appropriate scaling.

For the asymptotic distributional analysis of sequences $(X_n)_{n\ge
0}$ satisfying~(\ref{rec2}), the so-called contraction method has
become a powerful tool. In the seminal paper~\cite{Ro91}, R\"osler
introduced this methodology for deriving a limit law for a special
instant of this equation that arises in the analysis of the complexity
of the Quicksort algorithm. In the framework of the contraction method,
first one derives limits of the coefficients $A_r^{(n)}$, $b^{(n)}$,
%
%
\begin{equation}
\label{coeconv} A_r^{(n)} \to A_r, \qquad
b^{(n)} \to b \qquad(n \to\infty)
\end{equation}
in an appropriate sense. If with $n\to\infty$, also the $I^{(n)}_r$
become large and it is plausible that the quantities $X_n$ converge,
say to a random variable $X$; then, by letting formally $n\to\infty$,
equation~(\ref{rec2}) turns into
%
%
\begin{equation}
\label{fix2} X \stackrel{d} {=} \sum_{r=1}^K
A_r X^{(r)} + b
\end{equation}
with $X^{(1)},\ldots,X^{(K)}$ distributed as $X$ and $X^{(1)},\ldots,X^{(K)}$, $(A_1,\ldots,A_k,b)$ independent. Hence, one can use the
distributional fixed-point equation~(\ref{fix2}) to characterize the
limit distribution ${\mathcal L}(X)$. The idea from R\"osler \cite
{Ro91} to formalize such an approach and to derive at least weak
convergence $X_n \to X$ consists of first using the right-hand side of
(\ref{fix2}) to define a map as follows: if $X_n$ are $B$-valued random
variables, denote by $\mathcal{M}(B)$ the space of all probability
measures on $B$ and
%
%
\begin{eqnarray}
\label{limitmap}
&\displaystyle T \dvtx \mathcal{M}(B) \to\mathcal{M}(B),&
\\
&\displaystyle T(\mu) = \mathcal{L} \Biggl( \sum_{r=1}^K
A_r Z^{(r)} + b \Biggr),&
\end{eqnarray}
where $(A_1, \ldots, A_K,b), Z^{(1)}, \ldots, Z^{(K)}$ are independent
and $Z^{(1)}, \ldots, Z^{(K)}$ have distribution $\mu$. Then a random
variable $X$ solves~(\ref{fix2}) if and only if its distribution
${\mathcal L}(X)$ is a fixed point of the map $T$. To obtain fixed
points of $T$ appropriate subspaces of $\mathcal{M}(B)$ are endowed
with a complete metric, such that the restriction of~$T$ becomes a
contraction. Then Banach's fixed-point theorem yields a (in the
subspace) unique fixed point of $T$ and one can as well use the metric
to also derive convergence
of ${\mathcal L}(X_n)$ to ${\mathcal L}(X)$ in this metric. If the
metric is also strong enough to imply weak convergence, one has
obtained the desired limit law $X_n \to X$.

This approach has been established and applied to a couple of examples
in R\"osler \cite{Ro91,Ro92} and Rachev and R\"uschendorf \cite
{RaRu95}. In the latter paper also the flexibility of the approach by
using various probability metrics has been demonstrated. Later on
general convergence theorems have been derived stating conditions under
which convergence of the coefficients of the form~(\ref{coeconv})
together with a contraction property of the map~(\ref{limitmap})
implies convergence in distribution $X_n \to X$. For random variables
in $\mathbb{R}$ with the minimal $\ell_2$ metric, see R\"osler \cite{Ro99},
and Neininger \cite{Ne01} for $\mathbb{R}^d$ with the same metric.
For a
more widely applicable framework for random variables in $\mathbb
{R}^d$, see
Neininger and R\"uschendorf \cite{NeRu04}, where in particular various
problems with normal limit laws could be solved which seem to be beyond
the scope of the minimal $\ell_p$ metric; see also \cite{NeRu04b}. An
extension of these theorems to continuous time, that is, to processes
$(X_t)_{t\ge0}$ satisfying recurrences similar to~(\ref{rec2}) was
given in Janson and Neininger \cite{JaNe08}.

For the case of random variables in a separable Hilbert space leading
to functional limit laws, general limit theorems for recurrences (\ref
{rec1}) have been developed in Drmota, Janson and Neininger \cite
{DrJaNe08}. The main application there was a functional limit law for
the profile of random trees which, via a certain encoding of the
profile, led to random variables in the Bergman space of square
integrable analytic functions on a domain in the complex plane.
In Eickmeyer and R\"uschendorf~\cite{EiRu07}, \mbox{general} limit theorems
for recurrences in $\mathcal{D}[0,1]$ under the $L_p$-topology were
developed. Note
that the uniform topology for $\mathcal{C}[0,1]$ and the Skorokhod
topology for $\mathcal{D}[0,1]
$ considered in the present paper are finer than the $L_p$-topology. In
$\mathcal{C}[0,1]$, the uniform topology provides more continuous
functionals such
as the supremum $f \mapsto\sup_{t\in[0,1]} f(t)$ or projections $f
\mapsto f(s_1,\ldots,s_k)$, for fixed $s_1,\ldots,s_k\in[0,1]$, to
which the continuous mapping theorem can be applied. In $\mathcal
{D}[0,1]$, these
functionals are also appropriate for the continuous mapping theorem if
the limit random variable has continuous sample paths.

Besides the minimal $\ell_p$ metrics the probability metrics that have
proved useful in most of the papers mentioned above is the family of
Zolotarev metrics $\zeta_s$ being reviewed and further developed here
in Section~\ref{zolosec}. All generalizations from $\mathbb{R}$ via
$\mathbb{R}
^d$ to separable Hilbert spaces are based on the fact that convergence
in $\zeta_s$ implies weak convergence; see Section~\ref{zolosec}.
However, for Banach spaces this is not true in general. Counterexamples
have been reported in Bentkus and Rachkauskas~\cite{BeRa84}, sketched
here in Section~\ref{zolobasic}. Also completeness of the $\zeta_s$
metrics on appropriate subspaces of ${\mathcal M}(B)$ is only known for
the case of separable Hilbert spaces; see~\cite{DrJaNe08}, Theorem 5.1.

Our study of the spaces $(\mathcal{C}[0,1], \| \cdot\|_\infty)$
and $(\mathcal{D}[0,1],
d_{\mathrm{sk}})$ is also based on
the Zolotarev metrics $\zeta_s$. Hence, we mainly have to deal with
implications that can be drawn from convergence
in the $\zeta_s$ metrics as well as with the lack of knowledge
about completeness of $\zeta_s$. In Section~\ref{subsecweak},
implications of convergence in the Zolotarev metric are discussed
together with additional conditions that enable to deduce in general
weak convergence from convergence in $\zeta_s$. A key ingredient here
is a technique developed in Barbour
\cite{Barbour90} in the context of Stein's method; see also Barbour and
Janson \cite{BaJa09}.
We also obtain criteria for the uniform integrability of $\{\|X_n\|
_\infty^s | n\ge0\}$
for $0\le s\le3$ in the presence of convergence in the Zolotarev
metric. This enables in applications as well to obtain moments
convergence of the $\sup$-functional.

In Section~\ref{nnnCM}, we give general convergence theorems in the
framework of the contraction method first for a general separable
Banach space and then apply and refine this to the space $(\mathcal
{C}[0,1], \|
\cdot\|_\infty)$ and develop a technique to also apply this to the
metric space $(\mathcal{D}[0,1], d_{\mathrm{sk}})$.
In particular, based on Janson and Kaijser \cite{jankai12}, we give a
criterion for the finiteness of the Zolotarev metric on appropriate
subspaces that can easily be checked in applications.

To compensate for the lack of knowledge about completeness of the
$\zeta
_s$ metrics, we need to
assume that the map $T$ in~(\ref{limitmap}) has a fixed point in an
appropriate subspace of ${\mathcal M}(\mathcal{C}[0,1])$ and
${\mathcal M}(\mathcal{D}[0,1])$,
respectively. In applications, one may verify this existence of a fixed
point either by guessing one successfully:
in the application of our framework to Donsker's functional limit
theorem in Section~\ref{secapp2}, the Wiener measure can easily be
guessed and be seen to be the fixed point of the map $T$ coming up there.
Alternatively, in general the existence of a fixed point may arise from
infinite iteration of the map $T$: applied to some probability measure,
such an iteration has a series representation for which one may be able
to show that it is the desired fixed point.
This path is being taken in an application of our framework outlined in
Section~\ref{secpm}.

In Section~\ref{secapp2}, we apply our functional contraction method
to derive a short proof of Donsker's functional limit theorem. This
does not require the full generality of our setting but illustrates how
self-similarities can easily been exploited with this approach. The
application in Section~\ref{secpm} is on the asymptotic study of
fundamental complexities in computer science. Here, the full generality
of our approach is needed to obtain a functional limit law. We
highlight and discuss the use of our conditions (C1)--(C5)
formulated in Section~\ref{nnnCM} on the recurrence~(\ref{rec2}) at
this example. Details on the verification of the conditions are
contained in Broutin, Neininger and Sulzbach \cite{BrNeSu11} where,
based on the functional limit law, also various long open standing
problems on the complexities in computer science are solved.


\section{The Zolotarev metric} \label{zolosec}

Let $(B, \| \cdot\|)$ be a real Banach space and $\mathcal{B}$ its Borel
$\sigma$-algebra. In Section~\ref{zolobasic}, we assume that the norm on $B$
induces a separable topology. We denote by $\mathcal{M}(B)$ the set of
all probability measures on $(B,\mathcal{B})$.
First, we introduce the Zolotarev metric $\zeta_s$ and collect some of
its basic properties, mainly covered in \cite{Zolo76,Zolo78}. In the
second subsection, we define our use of the Zolotarev metrics on the
metric space $(\mathcal{D}[0,1], d_{\mathrm{sk}})$. Although not a Banach
space, we will be
able to declare the Zolotarev metrics $\zeta_s$ on $(\mathcal
{D}[0,1], d_{\mathrm{sk}})$
using the notion of differentiability of functions $\mathcal
{D}[0,1]\to\mathbb{R}$ induced
by the supremum norm on $\mathcal{D}[0,1]$. We also comment in Remarks
\ref{remmeas} and~\ref{remsko} on delicate measurability issues for the
nonseparable Banach space ${(B, \| \cdot\|)=(\mathcal{D}[0,1], \|
\cdot\|
_\infty)}$ and the realm of our methodology when working with the
coarser (separable) topology on $\mathcal{D}[0,1]$ induced by the
Skorokhod metric.
In the third subsection,
conditions that allow to conclude from convergence in $\zeta_s$ to weak
convergence are studied for the case $(B, \| \cdot\|)=(\mathcal
{C}[0,1],\| \cdot
\|_\infty)$ as well as for the case $(\mathcal{D}[0,1], d_{\mathrm{sk}})$.
We also discuss
further implications from $\zeta_s$-convergence in these two spaces as
well as criteria for finiteness of $\zeta_s$. Additional material to
the content of this section can be found in the second author's
dissertation \cite{SuDiss}, Chapter~2.

\subsection{Definition and basic properties} \label{zolobasic}
For functions $f\dvtx  B \to\mathbb{R}$, which are Fr\'echet
differentiable, the
derivative of $f$ at a point $x$ is denoted by $Df(x)$. Note that
$Df(x)$ is an element of the space $L(B, \mathbb{R})$ of continuous linear
forms on $B$. We also consider higher order derivatives, where $D^m
f(x)$ denotes the $m$th derivative of $f$ at a point $x$. Thus, $D^m
f(x)$ is a continuous $m$-linear (or multilinear) form on $B$. The
space of continuous multilinear forms $g\dvtx  B^m \to\mathbb{R}$ is
equipped with
the norm
\[
\llVert g\rrVert= \sup_{ \llVert h_1\rrVert\leq1, \ldots, \llVert
h_m\rrVert\leq1} \bigl|g(h_1,
\ldots,h_m)\bigr|.
\]
For a comprehensive account on differentiability in Banach spaces, we
refer to Cartan \cite{cartan1971}.
Subsequently, $s >0$ is fixed and for $m:= \lceil s \rceil- 1$ and
$\alpha:= s-m$ we define
%
%
\begin{equation}
\label{deffs} \mathcal{F}_s = \bigl\{f\dvtx B \to\mathbb{R}\dvtx
\bigl\llVert D^m f(x) - D^m f(y) \bigr\rrVert\leq\llVert
x-y\rrVert ^{\alpha},\ \forall x,y \in B \bigr\}.
\end{equation}
For $\mu, \nu\in\mathcal{M}(B)$, the Zolotarev distance between
$\mu$
and $\nu$ is defined by
%
%
\begin{equation}
\label{defzo} \zeta_s(\mu, \nu) = \sup_{f \in\mathcal{F}_s} \bigl|
\mathbf{E} \bigl[f(X)-f(Y) \bigr] \bigr|,
\end{equation}
where $X$ and $Y$ are $B$-valued random variables with $\mathcal{L}(X)
= \mu$
and $\mathcal{L}(Y) = \nu$. Here, $\mathcal{L}(X)$ denotes the
distribution of the
random variable $X$.
The expression in~(\ref{defzo}) does not need to be finite or even well
defined. However, we have \mbox{$\zeta_s(\mu, \nu)<\infty$} if
%
%
\begin{equation}
\label{eqmom1} \int\llVert x\rrVert^s\, d \mu(x), \int\llVert x
\rrVert^s\, d \nu(x) < \infty
\end{equation}
and
%
%
\begin{equation}
\label{eqmom2} \int f(x, \ldots, x) \,d \mu(x) = \int f(x, \ldots, x) \,d \nu(x)
\end{equation}
for any bounded $k$-linear form $f$ on $B$ and any $1 \leq k \leq m$.
For random variables $X$, $Y$ in $B$, we use the abbreviation $\zeta
_s(X,Y):= \zeta_s(\mathcal{L}(X), \mathcal{L}(Y))$.
Finiteness of $\zeta_s(X,Y)$ in $\mathbb{R}^d$ fails to hold if $X$
and $Y$ do
not have the same mixed moments up to order $m$. The assumption on the
finite absolute moment of order $s$ can be relaxed slightly; see
Theorem 4 in \cite{Zolo77}.

We denote
\[
\mathcal{M}_s(B):= \biggl\{ \mu\in\mathcal{M}(B) \Big| \int\llVert x
\rrVert^s\, d \mu(x) < \infty \biggr\}
\]
and for all $\nu\in\mathcal{M}_s(B)$ denote
\[
\mathcal{M}_s(\nu):= \bigl\{ \mu\in\mathcal{M}_s(B) |
\mu \mbox{ and }\nu\mbox{ satisfy~(\ref{eqmom2})} \bigr\}.
\]
Then $\zeta_s$ is a metric on the space ${\mathcal M}_s(\nu)$ for any
$\nu\in\mathcal{M}_s(B)$; see \cite{Zolo79}, Remark~1, page~198.

A crucial property of $\zeta_s$ in the context of recursive
decompositions of stochastic processes is the following lemma; see
Theorem 3 in \cite{Zolo77}. A short proof is given for the reader's
convenience.

%
\begin{lem} \label{lemtrans}
Let $B'$ be a Banach space and $g\dvtx  B \to B'$ a linear and continuous
operator. Then we have
\[
\zeta_s \bigl(g(X),g(Y) \bigr) \leq\llVert g\rrVert^s
\zeta_s(X,Y), \qquad{\mathcal L}(X), {\mathcal L}(Y) \in{\mathcal
M}_s(\nu).
\]
Here, $\| g \|$ denotes the operator norm of $g$, that is, $\| g\| =
\sup_{x \in B, \|x \| \leq1} \| g(x) \|$.
\end{lem}

\begin{pf} Note that $g$ is also bounded. It suffices to show that
\[
\bigl\{ \llVert g\rrVert^{-s} f \circ g\dvtx f \in
\mathcal{F}'_s \bigr\}\subseteq\mathcal{F}_s,
\]
where $\mathcal{F}'_s$ is defined analogously to $\mathcal{F}_s$ in $B'$.
Let $f \in\mathcal{F}_s$ and $\eta:= \llVert g\rrVert^{-s} f \circ
g$. Then
$\eta$ is $m$-times continuously differentiable and we have $D^m\eta(x)
= \llVert g\rrVert^{-s} (D^m(f(g(x))) \circ g^{\otimes m}$ for $x \in
B$. Here, $g^{\otimes m}\dvtx  B^m \to(B')^m$ denotes the mapping
$g^{\otimes m}(h_1, \ldots,h_m) = (g(h_1), \ldots, g(h_m))$. This implies
\begin{eqnarray*}
\bigl\|D^m\eta(x) - D^m\eta(y)\bigr\| & = & \llVert g\rrVert
^{-s} \bigl\| \bigl(D^mf \bigl(g(x) \bigr) \bigr) \circ
g^{\otimes m} - \bigl(D^mf \bigl(g(y) \bigr) \bigr) \circ
g^{\otimes m} \bigr\|
\\
& \leq& \llVert g\rrVert^{-\alpha} \bigl\|g(x) - g(y)\bigr\|^{\alpha}
\\
& = & \llVert g\rrVert^{-\alpha} \bigl\|g(x-y)\bigr\|^{\alpha} \leq\|x-y\|
^{\alpha}.
\end{eqnarray*}
The assertion follows.
\end{pf}

Another basic property is that $\zeta_s$ is $(s,+)$ ideal.

%
\begin{lem} \label{lemideal}
The metric $\zeta_s$ is ideal of order $s$ on ${\mathcal M}_s(\nu)$ for
any $\nu\in\mathcal{M}_s(B)$, that is, we have
\begin{eqnarray*}
\zeta_s(cX, cY) & = & |c|^s \zeta_s(X,Y),
\\
\zeta_s(X + Z, Y+ Z) & \leq& \zeta_s(X,Y)
\end{eqnarray*}
for any $c \in\mathbb{R}\setminus\{0\}$, ${\mathcal L}(X),
{\mathcal
L}(Y)\in{\mathcal M}_s(\nu)$ and random variables $Z$ in $B$, such
that $(X,Y)$ and $Z$ are independent.
\end{lem}

The lemma directly implies
%
%
\begin{equation}
\label{ungldo} \zeta_s(X_1 + X_2,
Y_1 + Y_2) \leq\zeta_s(X_1,
Y_1) + \zeta_s(X_2, Y_2)
\end{equation}
for ${\mathcal L}(X_1), {\mathcal L}(Y_1) \in{\mathcal M}_s(\nu_1)$
and ${\mathcal L}(X_2), {\mathcal L}(Y_2) \in{\mathcal M}_s(\nu_2)$
with arbitrary $\nu_1,\nu_2 \in{\mathcal M}_s(B)$ such that
$(X_1,Y_1)$ and $(X_2,Y_2)$ are independent.

We want to give a result similar to Lemma~\ref{lemtrans} where the
linear operator may also be random itself. We focus on the case that
$B'$ either equals $B$ or
$\mathbb{R}$ where an extension to $\mathbb{R}^d$ for $d > 1$ is
straightforward. Let
$B^*$ be the topological dual of $B$ and $\widehat{B}$ be the space of
all continuous linear maps from $B$ to $B$. Endowed with the operator norms
\[
\| f \|_{\mathrm{op}} = \sup_{x \in B, \|x \| \leq1} \bigl|f(x)\bigr|, \qquad\| f
\|_{\mathrm{op}} = \sup_{x \in B, \|x \| \leq1} \bigl\|f(x)\bigr\|,
\]
both spaces, $B^*$ and $\widehat{B}$, respectively, are Banach spaces.
However, these spaces are typically nonseparable, hence not suitable
for our purposes of measurability. Therefore, we will equip them with
smaller $\sigma$-algebras. Similar to the use of weak-* convergence,
let $\mathcal{B}^*$ be the $\sigma$-algebra on $B^*$ that is
generated by all continuous
(with respect to $\| \cdot\|_{\mathrm{op}}$) linear forms
$\varphi
$ on $B^*$ (i.e., elements of the bidual $B^{**}$) of the form $\varphi
(a) = a(x)$ for some $x \in B$. Note that the set of these continuous
linear forms coincides with the bidual $B^{**}$ if and only if $B$ is
reflexive, a property that is not satisfied in our applications. We
move on to $\widehat{B}$ and define $\widehat{\mathcal{B}}$ to be
the $\sigma
$-algebra generated by all continuous (with respect to $\| \cdot\|
_{\mathrm{op}}$) linear maps $\psi$ from $\widehat{B}$ to $B$ of the
form $\psi(a) = a(x)$ for some $x \in B$.
By Pettis' theorem, we have $\mathcal{B}= \sigma( \ell\in B^*)$.
Hence, if $S
\subseteq B^*$ with $\mathcal{B}= \sigma( \ell\in S)$, then
$\widehat{\mathcal{B}}$ is
also generated by the continuous
linear forms $\varrho$ on $\widehat{B}$ that can be written as
$\varrho
(a) = \ell(a(x))$ for $\ell\in S$ and $x \in B$.

Using the separability of $B$, it is now easy to see that the
norm-functionals $B^*\to\mathbb{R}$, $f \mapsto\|f \|_{\mathrm{op}}
$ and
$\widehat{B} \to\mathbb{R}$, $f \mapsto\|f \|_{\mathrm{op}}$ are
$\mathcal{B}^*$--$\mathcal{B}(\mathbb{R}
)$ measurable and \mbox{$\widehat{\mathcal{B}}$--$\mathcal{B}(\mathbb{R})$}
measurable, respectively.

%
\begin{defn} \label{deflin}
By a \emph{random continuous linear form on $B$}, we denote any random
variable with values in $(B^*, \mathcal{B}^*)$. Analogously,\vspace*{1pt} \emph{random
continuous linear operators
on $B$} are random variables with values in $(\widehat{B}, \widehat
{\mathcal{B}})$.
\end{defn}

Note that the definition of the $\sigma$-algebras $\mathcal{B}^*$ and
$\widehat
{\mathcal{B}}$ implies in particular that for any $a \in B^*$ or $a
\in\widehat
{B}, x \in B$, random continuous
linear form or operator $A$ and random variable $X$ in $B$, we have
that the compositions $a(X)$, $A(x)$ and $A(X)$ are again random
variables. The latter property
follows from measurability of the map $(a,x) \mapsto a(x)$ with respect
to $(\mathcal{B}^* \otimes\mathcal{B})$--$\mathcal{B}(\mathbb{R})$ and
$(\widehat{\mathcal{B}} \otimes\mathcal{B})$--$\mathcal{B}$,
respectively. In the case of the dual space, this follows as for any
$r\in\mathbb{R}$ we have
\begin{eqnarray*}
&& \bigl\{ (a,x)\in B^*\times B\dvtx a(x)< r \bigr\}
\\
&&\qquad= \bigcup_{k \geq1}\,
\bigcup_{m \geq1}\,
\bigcap_{n \geq m}\,
\bigcup_{i \geq1} \bigl\{a \in B^*\dvtx a(e_i)< r
-1/k \bigr\} \times\{x\in B\dvtx \|x - e_i\| < 1/n\},
\end{eqnarray*}
where $\{e_i | i\ge1\}$ denotes a countable dense subset of $B$;
the case $\widehat{B}$ being analogous.

The following lemma follows from Lemma~\ref{lemtrans} by conditioning.

%
\begin{lem} \label{lemzetalinrandom}
Let ${\mathcal L}(X), {\mathcal L}(Y) \in{\mathcal M}_s(\nu)$ for some
$\nu\in{\mathcal M}_s(B)$. Then, for any random linear continuous form
or operator $A$ with $\mathbf{E} [\|A\|_{\mathrm{op}}^s
] < \infty$ independent
of $X$ and $Y$, we have
\[
\zeta_s \bigl(A(X), A(Y) \bigr) \leq\mathbf{E} \bigl[\|A
\|_{\mathrm{op}}^s \bigr] \zeta_s(X,Y).
\]
\end{lem}

Zolotarev gave upper and lower bounds for $\zeta_s$, most of them being
valid if more structure on $B$ is assumed. Subsequently, only an upper
bound in terms of the minimal $\ell_p$ metric is needed.
For $p>0$ and $\mu, \nu\in\mathcal{M}_p(B)$, the minimal $\ell_p$
distance between $\mu$ and $\nu$ is defined by
\[
\ell_p(\mu, \nu) = \inf\mathbf{E} \bigl[\|X-Y\|^p
\bigr]^{(1/p)
\wedge1},
\]
where the infimum is taken over all common distributions ${\mathcal
L}(X,Y)$ with marginals $\mathcal{L}(X) = \mu$ and $\mathcal{L}(Y) =
\nu$. We
abbreviate $\ell_p(X,Y):=\ell_p({\mathcal L}(X),{\mathcal L}(Y))$.

The next lemma gives an upper bound of $\zeta_s$ in terms of $\ell_s$
where the first statement follows from the Kantorovich--Rubinstein
theorem and the second essentially coincides with Lemma 5.7 in \cite{DrJaNe08}.

%
\begin{lem} \label{lemls}
Let ${\mathcal L}(X),{\mathcal L}(Y) \in{\mathcal M}_s(\nu)$ for some
$\nu\in{\mathcal M}_s(B)$ with $B$ separable. If $s \leq1$ then
%
%
\begin{equation}
\label{zeta=ls} \zeta_s(X,Y) = \ell_s(X,Y).
\end{equation}
If $s > 1$ then
\[
\zeta_s(X,Y) \leq \bigl(\mathbf{E} \bigl[\| X\|^s
\bigr]^{1 -
1/s}+ \mathbf{E} \bigl[\| Y\|^s \bigr]^{1 - 1/s}
\bigr) \ell_s(X,Y).
\]
\end{lem}

If $X_n, X$ are real-valued random variables, $n\ge1$, then $\zeta
_s(X_n, X) \rightarrow0$ implies convergence of absolute moments of
order up to $s$ since there is a constant $C_s > 0$ such that the
function $x\mapsto C_s |x|^s$ is an element of $\mathcal{F}_s$, hence
$| \mathbf{E} [|X_n|^s - |X|^s ]| \leq C_s^{-1} \zeta_s(X_n,X)$.

We proceed with the fundamental question of how convergence in the
$\zeta_s$ distance relates to weak convergence on $B$.
By the first statement of the previous lemma, or more elementary,
by the proof of the Portmanteau lemma \cite{Billingsley1999}, Theorem~2.1(ii)--(iii), one obtains that for $0 < s \leq1$
convergence in the $\zeta_s$ metric implies weak convergence; see also
\cite{DrJaNe08}, page 300.

If $B$ is a separable Hilbert space, then for any $s > 0$ convergence
in the $\zeta_s$ metric implies weak convergence. This was first proved
by Gin\'{e} and
Le\'{o}n in \cite{GiLe1980}, see also Theorem 5.1 in \cite{DrJaNe08}.
In infinite-dimensional Banach spaces convergence in the $\zeta_s$
metric does not need to imply weak convergence: for any probability
distribution $\mu$ on $B=\mathcal{C}[0,1]$ with zero mean and $\int\|
x\|_\infty^s
\,d\mu(s) < \infty$ for some $s > 2$, that is pre-Gaussian, that is,
there exists a Gaussian
measure $\nu$ on $\mathcal{C}[0,1]$ with zero mean and the same
covariance as $\mu$,
one has $\zeta_s$-convergence of a rescaled sum of independent random
variables with distribution $\mu$ toward $\nu$; see inequality (48) in
\cite{Zolo76}.
However, pre-Gaussian probability distributions supported by a bounded
subset of $\mathcal{C}[0,1]$ that do not satisfy the central limit
theorem can be
found in \cite{DuSt}. For the central limit theorem in Banach spaces,
see \cite{LeTa}. 
Note that convergence with respect to $\zeta_s$ implies convergence of
the characteristic functions, hence $\zeta_s(X_n,X) \rightarrow0$
implies that $\mathcal{L}(X)$ is the only possible accumulation point of
$ (\mathcal{L}(X_n) )_{n \geq0}$ in the weak topology.

\subsection{The Zolotarev metric on $(\mathcal{D}[0,1],d_{\mathrm{sk}})$} \label{subsecdsk}
In this section, we discuss our use of the Zolotarev metric on the
metric space $(\mathcal{D}[0,1], d_{\mathrm{sk}})$ of
c\`{a}dl\`{a}g functions on $[0,1]$ endowed with the Skorokhod metric
defined by
\begin{eqnarray*}
&& d_{\mathrm{sk}}(f,g)
\\
&&\qquad = \inf \bigl\{ \varepsilon>0 | \max \bigl\{ \bigl|f(t) - g
\bigl(\tau(t) \bigr)\bigr|, \bigl|\tau(t) - t\bigr| \bigr\} < \varepsilon\mbox{ for all } t
\in[0,1]
\\
&&\phantom{\qquad = \inf \bigl\{} \mbox{for some monotonically increasing and bijective } \tau\dvtx [0,1]
\to[0,1] \bigr\}.
\end{eqnarray*}
The Borel $\sigma$-algebra of the induced topology is denoted by
$\mathcal{B}
_{\mathrm{sk}}$. For a general introduction to this space, see Billingsley \cite{Billingsley1999}, Chapter~3. In particular, $(\mathcal{D}[0,1],
d_{\mathrm{sk}})$ is a
Polish space, $\mathcal{B}_{\mathrm{sk}}$ coincides with the $\sigma$-algebra generated
by the finite-dimensional projections, the $\sigma$-algebra generated
by the open spheres (with respect to the uniform metric) and the
$\sigma
$-algebra generated by all norm-continuous linear forms on $\mathcal
{D}[0,1]$; see
\cite{Pestman94}, Theorem 3. Subsequently, norm on $\mathcal{D}[0,1]$
will always
refer to the uniform norm $\| \cdot\|_\infty$.
Moreover, the norm function $\mathcal{D}[0,1]\to\mathbb{R}$, $f
\mapsto\|f
\|_\infty$ is
$\mathcal{B}_{\mathrm{sk}}$--$\mathcal{B}(\mathbb{R})$ measurable. By Theorem 2,
respectively, Theorem 4, in
\cite{Pestman94}, any norm-continuous linear form on $\mathcal
{D}[0,1]$ is $\mathcal{B}
_{\mathrm{sk}}$--$\mathcal{B}(\mathbb{R})$ measurable and any norm-continuous linear
map from $\mathcal{D}[0,1]
$ to $\mathcal{D}[0,1]$ is $\mathcal{B}_{\mathrm{sk}}$--$\mathcal{B}_{\mathrm{sk}}$
measurable. Recently, Janson and
Kaijser \cite{jankai12}, Theorem 15.8, generalized the latter result and
proved that any norm-continuous $k$-linear form on $\mathcal{D}[0,1]$
is $(\mathcal{B}
_{\mathrm{sk}})^{\otimes k}$--$\mathcal{B}(\mathbb{R})$ measurable. We do,
however, not know
whether ${\mathcal F}_s$ defined in~(\ref{deffs}) based on the uniform
norm on $\mathcal{D}[0,1]$ is a subset of the $\mathcal
{B}_{\mathrm{sk}}$--$\mathcal{B}(\mathbb{R})$ measurable
functions. Hence, we denote the $\mathcal{B}_{\mathrm{sk}}$--$\mathcal
{B}(\mathbb{R})$
measurable functions
by ${\mathcal E}$ and define the Zolotarev metrics analogously to (\ref
{defzo}) by
\[
\zeta_s(\mu, \nu) = \sup_{f \in\mathcal{F}_s \cap{\mathcal E}} \bigl| \mathbf{E}
\bigl[f(X)-f(Y) \bigr] \bigr|,
\]
where $X$ and $Y$ are $(\mathcal{D}[0,1],d_{\mathrm{sk}})$-valued random
variables with $\mathcal{L}
(X) = \mu$ and $\mathcal{L}(Y) = \nu$.

We denote by $\mathcal{M}_s(\mathcal{D}[0,1])$ the set of probability
distributions
$\mu$ on $\mathcal{D}[0,1]$ with $\int\|x\|_\infty^s\, d \mu(x) <
\infty$ and for $\nu
\in\mathcal{M}_s(\mathcal{D}[0,1])$, we define $\mathcal{M}_s(\nu
)$ to be the
subset of measures $\mu$ from $\mathcal{M}_s(\mathcal{D}[0,1])$
satisfying~(\ref{eqmom2}).
Then $\zeta_s$ is a metric on $\mathcal{M}_s(\nu)$ for all $\nu\in
\mathcal{M}_s(\mathcal{D}[0,1])$, Lemmas~\ref{lemtrans} and \ref{lemideal},
inequality~(\ref{ungldo}), Lemma~\ref{lemls} where~(\ref{zeta=ls}) is
to be replaced by $\zeta_s(X,Y) \leq\ell_s(X,Y)$, and the implication
$\zeta_s(X_n, X) \rightarrow0 \Rightarrow X_n \rightarrow X$ in
distribution if $0 < s \leq1$ remain valid. 

The situation becomes more involved concerning random linear forms and
operators as defined in Definition~\ref{deflin} in the separable
Banach case.
Let $\mathcal{D}[0,1]^*$ and $\widehat{\mathcal{D}[0,1]}$ be the
dual space, respectively, the
space of norm-continuous endomorphisms on $\mathcal{D}[0,1]$ as in the
Banach case.
For reasons of measurability, we need to restrict to smaller subspaces.
Let $\mathcal{D}[0,1]^*_c \subseteq\mathcal{D}[0,1]^*$ be the subset
of functions that are
additionally continuous with respect to $d_{\mathrm{sk}}$. Analogously,
$\widehat
{\mathcal{D}[0,1]}_c \subseteq\widehat{\mathcal{D}[0,1]}$ are those
endomorphism which are
continuous regarded as maps from $(\mathcal{D}[0,1], d_{\mathrm{sk}})$ to
$(\mathcal{D}[0,1], d_{\mathrm{sk}})$.
We endow $\mathcal{D}[0,1]^*_c$ with the $\sigma$-algebra generated
by the function
$f \mapsto\| f \|_{\mathrm{op}}$ and all elements $\varphi$ of
$\mathcal{D}[0,1]
^{**}$ of the form $\varphi(a) = a(x)$ for some $x \in\mathcal
{D}[0,1]$. Also the
$\sigma$-algebra on $\widehat{\mathcal{D}[0,1]}_c$ is generated by
the function $f
\mapsto\| f \|_{\mathrm{op}}$ and the continuous linear maps $\psi\dvtx
\widehat{\mathcal{D}[0,1]} \to\mathcal{D}[0,1]$ of the form
$\varphi(a) = a(x)$ for some $x \in
\mathcal{D}[0,1]$.
Under these conditions, we have the same measurability results as in
the Banach case and Lemma~\ref{lemzetalinrandom} remains valid.

%
\begin{rem} \label{remmeas}
Note that we could as well develop the use of the Zolotarev metric
together with the contraction method for the Banach space $(\mathcal
{D}[0,1], \|
\cdot\|_\infty)$. This can be done analogously to the discussion of
Sections~\ref{subsecweak} and~\ref{nnnCM} and in fact would lead to a
proof of Donsker's theorem similar to the one given in Section~\ref
{donskerproof} when replacing the linear interpolation
$S^n=(S^n_t)_{t\in[0,1]}$ by a constant (c\`{a}dl\`{a}g) interpolation
of the random walk. However, the applicability of such a framework
seems to be limited due to measurability problems in the nonseparable
space $(\mathcal{D}[0,1], \| \cdot\|_\infty)$: for example, the
random function
$X$ defined by
\[
X_t = \mathbf{1}_{ \{t \geq U \}}, \qquad t \in[0,1]
\]
with $U$ being uniformly distributed on the unit interval is known to
be nonmeasurable with respect to the Borel-$\sigma$-algebra on
$(\mathcal{D}[0,1],
\| \cdot\|_\infty)$. However, we have applications of the functional
contraction method developed here in mind on processes with jumps at
random times. A typical example in the context of random trees is given
in Section~\ref{secpm}; see also \cite{BrNeSu11}. Hence, in order to
even have measurability of the processes considered it requires to work
with the coarser Skorokhod topology than the uniform topology and this
is our reason for using the Zolotarev metric on $(\mathcal{D}[0,1],
d_{\mathrm{sk}})$ instead
of $(\mathcal{D}[0,1], \| \cdot\|_\infty)$.
\end{rem}

%
\begin{rem}\label{remsko}
Although the methodology developed below covers sequences $(X_n)_{n\ge
0}$ of processes with jumps at random times these times will typically
need to be the same for all $n\ge n_0$. In particular, sequences of
processes with jumps at random times that require a (uniformly small)
deformation of the time scale to be aligned cannot be covered by this
methodology. The technical reason is that in condition~(C1) below
(see Section~\ref{nnnCM}) the convergence of the random continuous
endomorphisms $\|A^{(n)}_r - A_r\|_s$ is with respect to the operator
norm based on the uniform norm which in general does not allow a
deformation of the time scale.
\end{rem}

\subsection{Weak convergence on \texorpdfstring{$(\mathcal{C}[0,1],\|\cdot\|_\infty)$}{$(\mathcal{C}[0,1],||cdot||_{infty})$} and 
\texorpdfstring{$(\mathcal{D}[0,1],d_{\mathrm{sk}})$}{$(\mathcal{D}[0,1],d_{\mathrm{sk}})$}}\label{subsecwcd}
\label{subsecweak}
In this subsection, we only consider the spaces $(\mathcal{C}[0,1], \|
\cdot\|
_\infty)$ and $(\mathcal{D}[0,1], d_{\mathrm{sk}})$.

For random variables $X=(X(t))_{t\in[0,1]}$, $Y=(Y(t))_{t\in[0,1]}$ in
$(\mathcal{C}[0,1], \| \cdot\|_{\infty})$ with $\zeta_s(X,Y) <
\infty$ we have
%
%
\begin{equation}
\label{unglfdd} \zeta_s\bigl( \bigl(X(t_1), \ldots,
X(t_k) \bigr), \bigl(Y(t_1), \ldots, Y(t_k)
\bigr)\bigr) \leq k^{s/2} \zeta_s(X,Y)
\end{equation}
for all $0 \leq t_1 \leq\cdots\leq t_k \leq1$. This follows from
Lemma~\ref{lemtrans} using the continuous and linear function $g\dvtx
\mathcal{C}[0,1]
\to\mathbb{R}^k, g(f) = (f(t_1),\ldots, f(t_k))$ and observing that
$\|g \| =
\sqrt{k}$. The bound $\zeta_s((X(t_1), \ldots, X(t_k)), (Y(t_1),\ldots, Y(t_k))) \leq\zeta_s(X,Y)
$ can be obtained if $\mathbb{R}^k$ is endowed with the $\max$-norm
instead of
the Euclidean norm. However, no use of this is made here. Hence, we
obtain for random variables $X_n$, $X$ in $(\mathcal{C}[0,1], \| \cdot
\|_{\infty
})$, $n\ge1$, the implication
\[
\zeta_s(X_n, X) \rightarrow0 \quad\Rightarrow\quad
X_n \stackrel{\mathrm{f.d.d.}} {\longrightarrow} X.
\]
Here, $\stackrel{\mathrm{f.d.d.}}{\longrightarrow}$ denotes weak
convergence of all finite-dimensional marginals of the processes.
Additionally, if $Z$ is a random variable in $[0,1]$, independent of
$(X_n)$ and $X$, then applying Lemma~\ref{lemzetalinrandom}
with the random continuous linear form $A$ defined by $A(f) = f(Z)$
implies
%
%
\begin{equation}
\label{unifein} \zeta_s \bigl(X_n(Z), X(Z) \bigr) \leq
\mathbf{E} \bigl[Z^s \bigr] \zeta_s(X_n, X).
\end{equation}

In the c\`{a}dl\`{a}g case, that is, $X=(X(t))_{t\in[0,1]}$,
$Y=(Y(t))_{t\in[0,1]}$ being random variables in $(\mathcal{D}[0,1], d_{\mathrm{sk}})$
inequality~(\ref{unglfdd}) remains true by Lemma~\ref{lemtrans}.
(The fact that $g$ is not continuous with respect to the product
Skorokhod topology does not cause problems since measurability is
sufficient here.) Next, in general, the operator $A$ is no element of
$\mathcal{D}[0,1]^*_c$. Hence, we cannot apply Lemma~\ref{lemzetalinrandom} to
deduce~(\ref{unifein}). Nevertheless, by Theorem 2 in \cite{Zolo77},
the convergence of the characteristic functions of $X_n(t)$ is uniform
in $t$, hence we also have convergence in distribution of $X_n(Z)$ to $X(Z)$.
The\vspace*{-1pt} same argument works for the moments of $X_n(Z)$.
We summarize these properties in the following proposition, where
$\stackrel{d}{\longrightarrow}$ denotes convergence in distribution.

%
\begin{prop} \label{propfdd} For random variables $X_n$, $X$ in
$(\mathcal{C}[0,1],
\| \cdot\|_{\infty})$ or  $(\mathcal{D}[0,1], d_{\mathrm{sk}})$, $n\ge1$, with
$\zeta_s(X_n,X) \rightarrow0$ for $n\to\infty$ we have
\[
X_n \stackrel{\mathrm{f.d.d.}} {\longrightarrow} X.
\]
$\mathcal{L}(X)$ is the only possible accumulation point of $(\mathcal
{L}(X_n))_{n\ge
1}$ in the weak topology.
For all $t \in[0,1]$ we have
\[
X_n(t) \stackrel{d} {\longrightarrow} X(t), \qquad\mathbf{E}
\bigl[\bigl|X_n(t)\bigr|^s \bigr] \to\mathbf{E}
\bigl[\bigl|X(t)\bigr|^s \bigr].
\]
For any random variable $Z$ in $[0,1]$ being independent of $(X_n)$ and
$X$, we have
\[
\mathbf{E} \bigl[\bigl|X_n(Z)\bigr|^s \bigr]\to\mathbf{E}
\bigl[\bigl|X(Z)\bigr|^s \bigr], \qquad X_n(Z) \stackrel{d} {
\longrightarrow} X(Z).
\]
\end{prop}

To conclude from convergence in the $\zeta_s$ metric to weak
convergence on $(\mathcal{C}[0,1], \| \cdot\|_\infty)$ or $(\mathcal
{D}[0,1], d_{\mathrm{sk}})$, further
assumptions are needed. Let, for $r>0$,
%
%
\begin{eqnarray}\label{defcrn}
{\mathcal C}_r[0,1]&:=& \bigl\{ f\in\mathcal{C}[0,1] | \exists
0=t_1<t_2<\cdots<t_\ell=1,\ \forall i=1,\ldots, \ell\dvtx
\nonumber\\[-8pt]\\[-8pt]
&&\hspace*{103pt} |t_i-t_{i-1}|\ge r, f|_{[t_{i-1},t_i]}\mbox{ is linear} \bigr\} \nonumber
\end{eqnarray}
denote the set of all continuous functions for which there is a
decomposition of $[0,1]$ into intervals of length at least $r$ such
that the function is piecewise linear on those intervals.
Analogously, we define
%
%
\begin{eqnarray} \label{defdrn}
{\mathcal D}_r[0,1]&:=& \bigl\{ f\in\mathcal{D}[0,1] | \exists
0=t_1<t_2<\cdots<t_\ell=1,\ \forall i=1,\ldots,
\ell\dvtx
\nonumber\\[-8pt]\\[-8pt]
&&\hspace*{21pt} |t_i-t_{i-1}|\ge r, f|_{[t_{i-1},t_i)} \mbox{ is
constant, continuous in }1 \bigr\}.\nonumber
\end{eqnarray}

%
\begin{teo} \label{teozet}
Let $X_n$ be random variables in $\mathcal{C}_{r_n}[0,1]$, $n\ge0$,
and $X$ a random
variable in $\mathcal{C}[0,1]$. Assume that for $0 < s \leq3$ with
$s=m+\alpha$ as
in~(\ref{deffs})
%
%
\begin{equation}
\label{eqzetas2} \zeta_s(X_n, X) = o \biggl(
\log^{-m} \biggl(\frac{1}{r_n} \biggr) \biggr).
\end{equation}
Then $X_n \rightarrow X$ in distribution. The assertion remains valid
if $\mathcal{C}[0,1], \mathcal{C}_{r_n}[0,1]$ are replaced by
$\mathcal{D}[0,1]$, $\mathcal{D}_{r_n}[0,1]$ endowed with the Skorokhod
topology and $X$ has continuous sample
paths.
\end{teo}

As discussed above, $\zeta_s$ convergence does not imply weak
convergence in the spaces $\mathcal{C}[0,1]$ and $\mathcal{D}[0,1]$
without any further
assumption such as~(\ref{eqzetas2}).
In the counterexample from \cite{DuSt}, the sequence $S_n / \sqrt{n}$
there converges to a Gaussian limit with respect
to $\zeta_s$ for $2 < s \leq3$ where the rate of convergence is upper
bounded by the order $n^{1 - s/2}$; see \cite{Zolo76} or \cite{SuDiss}.
Moreover, the sequence is piecewise linear but the sequence $r_n$ can
only be chosen of the order $(cn)^{-2n}$ for some $c > 0$. Hence,
(\ref{eqzetas2}) is not satisfied.

In applications such as our proof of Donsker's functional limit law in
Section~\ref{donskerproof} or the application of the present
methodology to a problem from the probabilistic analysis of algorithms
in \cite{BrNeSu11}, the rate of convergence will typically be of
polynomial order which is fairly sufficient.

We postpone the proof of the theorem to the end of this section and
state two variants, where the first one, Corollary~\ref{Cor4}, contains
a slight relaxation of the assumptions that is useful in applications
such as in the analysis of the complexity of partial match queries in
quadtrees; see Section~\ref{apppm} or \cite{BrNeSu11}.
The second one will be needed in the case $s > 2$; see Section~\ref{secapp2}.

%
\begin{cor} \label{Cor4}
Let $X_n, X$ be $\mathcal{C}[0,1]$ valued random variables, $n \geq
0$, and $0 < s
\leq3$ with $s=m+\alpha$ as in~(\ref{deffs}). Suppose $X_n = Y_n +
h_n$ with $Y_n$ being $\mathcal{C}[0,1]$ valued random variables and
$h_n \in\mathcal{C}[0,1]$,
$n \ge0$, such that $\|h_n -h \|_\infty\to0$ for a
$h \in\mathcal{C}[0,1]$ and
%
%
\begin{equation}
\label{eqbdge} \mathbf{P} \bigl(Y_n \notin\mathcal{C}_{r_n}[0,1]
\bigr) \to0.
\end{equation}
If
\[
\zeta_s(X_n, X) = o \biggl(\log^{-m} \biggl(
\frac{1}{r_n} \biggr) \biggr),
\]
then
\[
X_n \stackrel{d} {\longrightarrow} X.
\]
The statement remains true if $\mathcal{C}[0,1]$ and $\mathcal
{C}_{r_n}[0,1]$ are replaced by $\mathcal{D}[0,1]$
and $\mathcal{D}_{r_n}[0,1]$ endowed with the Skorokhod topology,
respectively, $X$ has
continuous sample paths and $h$ remains continuous.
\end{cor}

%
\begin{cor} \label{Cor1}
Let $X_n, Y_n, X$ be $\mathcal{C}[0,1]$ valued random variables, $n
\geq0$, and $0
< s \leq3$ with $s=m+\alpha$ as in~(\ref{deffs}). Suppose $X_n \in
\mathcal{C}_{r_n}[0,1]
$ for all $n$ and $Y_n \rightarrow X$ in distribution. If
\[
\zeta_s(X_n, Y_n) = o \biggl(
\log^{-m} \biggl(\frac{1}{r_n} \biggr) \biggr),
\]
then
\[
X_n \stackrel{d} {\longrightarrow} X.
\]
The statement remains true if $\mathcal{C}[0,1]$ and $\mathcal
{C}_{r_n}[0,1]$ are replaced by $\mathcal{D}[0,1]$
and $\mathcal{D}_{r_n}[0,1]$ endowed with the Skorokhod topology,
respectively, and $X$ has
continuous sample paths.
\end{cor}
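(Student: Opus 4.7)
I would reduce the corollary to (the proof of) Theorem \ref{thm:zet} by establishing weak convergence $X_n\to X$ via two separate ingredients---convergence of the finite-dimensional distributions and tightness of $(X_n)$ in $\Co$ (respectively in $\Do$ under $d_{sk}$)---and then invoking Prokhorov. The key new input compared to Theorem \ref{thm:zet} is that the target in the Zolotarev bound is the varying sequence $Y_n$ rather than a fixed limit; but since $Y_n\to X$ weakly on a Polish space, $(Y_n)$ is automatically tight, and this will stand in for the role played by $X$ in Theorem \ref{thm:zet}.

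For the finite-dimensional part, fix $0\le t_1<\cdots<t_k\le 1$ and apply Lemma \ref{lem:trans} to the evaluation map $g\colon f\mapsto (f(t_1),\ldots,f(t_k))$, which has $\|g\|=\sqrt k$ (in the $\Do$-case I may take the $t_i$ to be continuity points of $X$, which is all of $[0,1]$ since $X$ has continuous paths). This gives
\[
\zeta_s\!\bigl((X_n(t_1),\ldots,X_n(t_k)),(Y_n(t_1),\ldots,Y_n(t_k))\bigr)\le k^{s/2}\zeta_s(X_n,Y_n)\to 0.
\]
In $\R^k$, $\zeta_s$-convergence forces the difference of characteristic functions to tend to zero pointwise. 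Combined with $Y_n\stackrel{\mathrm{fdd}}{\to}X$ (inherited from $Y_n\to X$ via the continuous mapping theorem applied to the evaluation functionals, using continuity of the paths of $X$), L\'evy's continuity theorem in $\R^k$ yields $X_n\stackrel{\mathrm{fdd}}{\to}X$.

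The remaining tightness step is the main obstacle. Since $(Y_n)$ is weakly convergent, it is tight in $\Co$ (respectively in $\Do$), which by Arzel\`a--Ascoli (respectively Billingsley's $w'$ criterion) furnishes a uniform modulus-of-continuity control on $(Y_n)$. The structural assumption $X_n\in\Crn$ forces $X_n$ to be determined by its values at $O(1/r_n)$ breakpoints, so the modulus $w(X_n,\delta)$ at any scale $\delta\ge r_n$ reduces to a maximum over finitely many finite-dimensional increments. I plan to transfer the modulus estimate from $Y_n$ to $X_n$ by the same smoothing/mollification argument as in the proof of Theorem \ref{thm:zet}: one constructs a smooth test functional $\varphi$ approximating the event ``modulus of $X_n$ at scale $\delta$ exceeds $\varepsilon$'' which, after rescaling, lies in $\F_s$, so that $|\E{\varphi(X_n)}-\E{\varphi(Y_n)}|\le C_\varphi\,\zeta_s(X_n,Y_n)$. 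The smoothing cost of order $\log^m(1/r_n)$ incurred by mollifying over $O(1/r_n)$ breakpoints is precisely what the hypothesis $\zeta_s(X_n,Y_n)=o(\log^{-m}(1/r_n))$ absorbs. The $\Do$-case follows the same blueprint with Billingsley's modulus $w'$ and the piecewise-constant structure of $\Drn$ in place of the piecewise-linear one, and weak convergence $X_n\to X$ then follows from fdd convergence plus tightness via Prokhorov.
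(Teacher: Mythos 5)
Your proposal follows a genuinely different route from the paper. The paper's proof of Corollary~\ref{Cor1} is a direct and short modification of the proof of Theorem~\ref{thm:zet}: it keeps the same test functionals $g_{i,n}$ and Lemma~\ref{lem:help}, rewrites \eqref{rechnung1} with $Y_n$ in place of $X$ (so the error term is $a_n^{-1}\zeta_s(X_n,Y_n)$), and then bridges the remaining gap $\Ec{\prod_i g_{i,n}(Y_n)}-\Ec{\prod_i g_{i,n}(X)}\to 0$ by choosing, via Skorokhod's representation theorem, versions with $Y_n\to X$ almost surely, noting that $g_{i,n}(Y_n)-g_{i,n}(X)\to 0$ a.s.~off $\partial B_{\gamma_i}(x_i)$ (using that $\psi_{p,x_i}$ is $1$-Lipschitz in its argument uniformly in $p$), and applying dominated convergence. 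No separate tightness argument is made; Lemma~\ref{lem:help} directly gives weak convergence.

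By contrast, you propose the classical fdd-plus-tightness-plus-Prokhorov route. Your fdd step is correct: \eqref{ungl:fdd} applied to $\zeta_s(X_n,Y_n)$, the decay of the difference of characteristic functions under $\zeta_s$-convergence, $Y_n\stackrel{\mathrm{fdd}}{\to}X$ from the continuous mapping theorem (legitimate in $\Do$ because $X$ has continuous paths), and L\'evy's theorem together give $X_n\stackrel{\mathrm{fdd}}{\to}X$. The real gap is in the tightness step. You assert that a smooth test functional approximating the event $\{w(X_n,\delta)\ge\varepsilon\}$ can be built ``by the same smoothing/mollification argument as in the proof of Theorem~\ref{thm:zet},'' but that proof is tailored to sup-norm balls around \emph{fixed} centers $x_i$: Lemma~\ref{lem:diff} smooths $\|\,\cdot-x_i\|_\infty$ via $L_p$, and Lemma~\ref{ana1} gives a positive-measure lower bound using the uniform continuity of the \emph{fixed} function $g=x_i$. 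The modulus of continuity is a sup over a two-parameter family of increments of $f$ with itself; there is no fixed reference function, and the analogue of Lemma~\ref{ana1} (a lower bound on the Lebesgue measure of $\{(s,t):|s-t|\le\delta,\ |f(s)-f(t)|\ge(1-\vartheta)\gamma\}$ when $f\in\Crn$ and $w(f,\delta)\ge\gamma$) is not automatic --- the relevant length scale depends on the (random, unbounded) slopes of $f$, not merely on $r_n$. Your heuristic that $X_n\in\Crn$ ``reduces $w(X_n,\delta)$ to a maximum over finitely many finite-dimensional increments'' is also off, since the breakpoints of $X_n$ are sample-dependent rather than a fixed grid. So while your route is plausible in spirit, its central technical step is substantially harder than the paper's and is not carried out; the paper sidesteps all of this by applying Lemma~\ref{lem:help} directly and using Skorokhod's representation for the $Y_n\to X$ comparison.
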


In $\mathcal{C}[0,1]$ (or $\mathcal{D}[0,1]$, if the limit $X$ has
continuous paths), convergence
in distribution implies distributional convergence of the supremum norm
$\|X_n\|_\infty$ by the continuous mapping theorem. In applications,
one is also interested
in convergence of moments of the supremum. For random variables $X$ in
$\mathcal{C}[0,1]$ or $\mathcal{D}[0,1]$, we denote by
\[
\|X\|_s:= \bigl(\mathbf{E} \bigl[\|X\|_\infty^s
\bigr] \bigr)^{(1/s)\wedge1}
\]
the $L_s$-norm of the supremum norm.

%
%
\begin{teo} \label{teosup}
Let $X_n, X$ be $\mathcal{C}[0,1]$ valued random variables and $0 < s
\leq3$ with
$\|X_n \|_s, \|X\|_s < \infty$ for all $n \geq0$. Suppose one of the
following conditions is satisfied:
\begin{longlist}[(2)]
\item[(1)] $X_n \in\mathcal{C}_{r_n}[0,1]$ for all $n$ and
%
%
\begin{equation}
\zeta_s(X_n, X) = o \biggl(\log^{-m} \biggl(
\frac{1}{r_n} \biggr) \biggr).
\end{equation}
\item[(2)]\label{item2} $X_n = Y_n + h_n$ with $Y_n$ being $\mathcal
{C}[0,1]$ valued
random variables and $h_n \in\mathcal{C}[0,1]$, $n \ge0$, such that
$\|h_n -h \|
_\infty\to0$ for a
$h \in\mathcal{C}[0,1]$,
%
%
\begin{equation}
\label{eqbdge2} \mathbf{E} \bigl[\|X_n\|_\infty^s
\mathbf{1}_{\{ Y_n \notin
\mathcal{C}_{r_n}[0,1]\}} \bigr] \rightarrow0
\end{equation}
and
\[
\zeta_s(X_n, X) = o \biggl(\log^{-m} \biggl(
\frac{1}{r_n} \biggr) \biggr).
\]
\item[(3)]$(Y_n)_{n \geq0}$ is a sequence of $\mathcal{C}[0,1]$ valued
random variables
with $Y_n \leq Z$ almost surely for a $\mathcal{C}[0,1]$ valued random
variable $Z$
with $\|Z\|_s < \infty$, $X_n \in\mathcal{C}_{r_n}[0,1]$ for all $n$ and
\[
\zeta_s(X_n, Y_n) = o \biggl(
\log^{-m} \biggl(\frac{1}{r_n} \biggr) \biggr).
\]
\end{longlist}
Then $\{\|X_n\|_\infty^s | n\ge0\}$ is uniformly integrable. All
statements remain true if
$\mathcal{C}[0,1], \mathcal{C}_{r_n}[0,1]$ are replaced by $\mathcal
{D}[0,1], \mathcal{D}_{r_n}[0,1]$ and $h$ in item~(2) remains continuous.
\end{teo}

It is of interest whether the metric space $({\mathcal M}_s(\nu),\zeta
_s)$ is complete. This is true for $0 < s \leq1$. Also, in the case
that $B$ is a separable Hilbert space, this holds true; see Theorem 5.1
in \cite{DrJaNe08}. Nevertheless, the problem remains open in the
general case, in particular in the cases $\mathcal{C}[0,1]$ and
$\mathcal{D}[0,1]$ with $s>1$. We
can only state the following proposition.

%
\begin{prop} \label{propcomp}
Let $B = (\mathcal{C}[0,1], \|\cdot\|_\infty)$ or $B = (\mathcal
{D}[0,1], d_{\mathrm{sk}})$, $s > 0$ and
$\nu\in\mathcal{M}_s(B)$. Furthermore, let $(\mu_n)_{n\ge0}$ be a
sequence of probability measures from $\mathcal{M}_s(\nu)$ which is a
Cauchy sequence with respect to the $\zeta_s$ metric. Then there exists
a probability measure $\mu$ on $\mathbb{R}^{[0,1]}$ such that, as
$n\to
\infty$,
%
%
\begin{equation}
\label{convprop} \mu_n \stackrel{\mathrm{f.d.d.}} {\longrightarrow} \mu.
\end{equation}
\end{prop}

\begin{pf}
Let $\mathcal{L}(X_n) = \mu_n$ for all $n\ge0$.
According to~(\ref{unglfdd}), $(X_n(t_1), \ldots,\break X_n(t_k))_{n\ge0}$
is a Cauchy sequence and hence it exists a random variable $Y_{t_1,
\ldots, t_k}$ in $\mathbb{R}^k$ with
\[
\bigl(X_n(t_1), \ldots, X_n(t_k)
\bigr) \stackrel{d} {\longrightarrow} Y_{t_1,
\ldots, t_k}\qquad(n\to\infty).
\]
The set of distributions of $Y_{t_1, \ldots, t_k}$ for $0 \leq t_1 <
\cdots< t_k\le1$ and $k \in\mathbb N$ is consistent so there exists a
process $Y$ on the product space $\mathbb{R}^{[0,1]}$ whose distribution
satisfies~(\ref{convprop}).
\end{pf}

%
\begin{rem} If the distribution $\mu$ found in Proposition~\ref{propcomp} has a version with continuous paths then condition~(\ref{eqmom2}) for $\mu_n$ and $\mu$ is satisfied.
\end{rem}

We now present proofs of the theorems and corollaries of the present
sections. Theorem~\ref{teozet} essentially follows directly from
Theorem 2 in \cite{Barbour90}; see also \cite{BaJa09}. Nevertheless, we
present a version of the proof given there so that we can deduce the
variants and implications given in our other statements. A basic tool
are Theorems~2.2, 2.3~and~2.4 in Billingsley \cite{Billingsley1999}.

%
\begin{lem} \label{lemhelp}
Let $(\mu_n)_{n \geq0}, \mu$ be probability measures on a separable
metric space $(S,d)$. For $r > 0, x \in S$ let $B_r(x) = \{ y \in S\dvtx
d(x,y) < r\}$.
If for any $x_1, \ldots, x_k \in S, \gamma_1, \ldots, \gamma_k > 0$
with $\mu(\partial B_{\gamma_i}(x_i)) = 0$ for $i=1, \ldots, k$ it holds
%
%
\begin{equation}
\label{conv22} \mu_n \biggl( \bigcap_{i \in I }
B_{\gamma_i}(x_i) \biggr) \rightarrow\mu \biggl( \bigcap
_{i \in I } B_{\gamma_i}(x_i) \biggr),
\end{equation}
where $I = \{1, \ldots, k \}$, then $\mu_n \rightarrow\mu$ weakly.

Let $(S,d) = (\mathcal{D}[0,1], d_{\mathrm{sk}})$. Then the assertion remains
true when the
balls $B_{\gamma_i}(x_i)$ are still defined with respect to the \emph
{uniform} distance and $\mu(\mathcal{C}[0,1]) = 1$.
\end{lem}

\begin{pf}
The first part of the lemma is a special case of Theorem 2.4 in \cite
{Billingsley1999}. To prove the assertion in the c{\`a}dl{\`a}g space,
we apply
Theorem 2.2 in \cite{Billingsley1999} upon choosing $\mathcal{A}_P$
there to be the set of
finite intersection of sets $A$ where $A$ is either a $\mu$-continuous
open sphere (in the uniform distance) whose center lies in $\mathcal
{C}[0,1]$ or a
measurable set with positive uniform distance from $\mathcal{C}[0,1]$.
Using~(\ref{conv22}) and the inclusion-exclusion formula, it is easy
to see that
$\mu_n(C) \to0$ for any measurable set $C$ with positive uniform
distance from $\mathcal{C}[0,1]$, in particular $\mu_n(A) \to\mu
(A)$ for any $A \in
\mathcal{A}_P$. Moreover, we can decompose any open set $O \in
\mathcal{D}[0,1]$ (in
the Skorokhod topology) into $O'$ and $O \setminus O'$ with
\[
O':= \bigcup_{x,\delta}
B^{\| \cdot\|}_{x}(\delta),
\]
where the union is over all $x \in O \cap\mathcal{C}'$ for a countable
set $ \mathcal{C}'$ that is dense in $\mathcal{C}[0,1]$ and $\delta
\in\mathbb{Q}^+$ such
that $B^{\| \cdot\|}_{x}(\delta) \subseteq O$ and $B^{\| \cdot\|
}_{x}(\delta)$ is $\mu$-continuous. We have $O \cap\mathcal
{C}[0,1]\subseteq O'$
since any ball in the metric $d_{\mathrm{sk}}$ with center in $\mathcal
{C}[0,1]$ contains a
concentric ball in the uniform distance. Hence,
\[
O \setminus O' = \bigcup_{\delta\in\mathbb{Q}^+} \bigl
\{x \in O \setminus O'\dvtx \|y-x\| > \delta\mbox{ for all } y
\in\mathcal{C}[0,1] \bigr\}.
\]
Thus, any open set $O$ is a countable union of sets in $\mathcal{A}_P$
which proves all conditions of Theorem 2.2 in \cite{Billingsley1999} to
be satisfied and the claim follows.
\end{pf}

A main difficulty in deducing weak convergence from convergence in
$\zeta_s$ compared to the Hilbert space case is the
nondifferentiability of the norm function $x \mapsto\| x \|_\infty$;
see \cite{dieudonne}, page 147. We will instead use the smoother
$L_p$-norm which approximates the
supremum norm in the sense that
%
%
\begin{equation}
\label{ellpconv} L_p(x) \rightarrow\| x \|_\infty
\end{equation}
for any fixed $x \in\mathcal{C}[0,1]$ as $p \rightarrow\infty$.

For the remaining part of this section, $p$, for fixed values or
tending to infinity, is always to be understood as an even integer with
$p \geq4$. We use the Bachmann--Landau big-$O$ notation.

%
\begin{lem} \label{lemdiff}
For $x, y \in\mathcal{C}[0,1]$ let
\[
L_p(x) = \biggl(\int_0^1
\bigl[x(t) \bigr]^p \,dt \biggr)^{1/p}, \qquad
\psi_{p,y}(x) = L_p \bigl( \bigl(1+ [x-y]^2
\bigr)^{1/2} \bigr).
\]
Then $L_p$ is smooth on $\mathcal{C}[0,1]\setminus\{\mathbf{0}\}$
where $\mathbf
{0}$ is the zero-function and $\psi_{p,y}$ is smooth on $\mathcal
{C}[0,1]$ for all
$y\in\mathcal{C}[0,1]$. Furthermore, for $k \in\{1,2,3 \}$, we have
\[
\bigl\| D^kL_p(x)\bigr\| = O \bigl(p^{k-1}
L_p^{1-k}(x) \bigr),
\]
uniformly for $p$ and $x \in\mathcal{C}[0,1]\setminus\{ \mathbf{0}
\}$.
Moreover, again for $k \in\{1,2,3 \}$,
%
%
\begin{equation}
\label{unglpsi} \bigl\| D^k \psi_{p,y}(x)\bigr\| = O
\bigl(p^{k-1} \bigr)
\end{equation}
uniformly for $p$ and $x,y \in\mathcal{C}[0,1]$. All assertions
remain valid when
$\mathcal{C}[0,1]$ is replaced by $\mathcal{D}[0,1]$, moreover both
functions $L_p$ and $\psi
_{p,y}$ are continuous with respect to the Skorokhod metric for all $p$
and $y \in\mathcal{D}[0,1]$.
\end{lem}

\begin{pf}
The smoothness properties are obvious. Differentiating $L_p$ by the
chain rule yields
\[
DL_p(x)[h] = \biggl(\int_0^1
\bigl[x(t) \bigr]^p \,dt \biggr)^{1/p -1} \int
_0^1 \bigl[x(t) \bigr]^{p-1} h(t) \,dt.
\]
For $h\in\mathcal{C}[0,1]$ with $\|h\| \leq1$ by Jensen's inequality and
$L_p(h)\le\|h\|$, we obtain that the right-hand side of the latter
display is
uniformly bounded by $1$. The bounds on the norms of the higher order
derivatives follow along the same lines. Using the same ideas, it is
easy to see that
\[
\bigl\| D^k \psi_{p,y}(x)\bigr\| = O \Biggl( \sum
_{j=1}^{k} p^{j-1} L_p^{1-j}
\bigl(\omega_y(x) \bigr) \Biggr),
\]
uniformly in $p$ and $x,y \in\mathcal{C}[0,1]$ where $\omega_y(x) =
(1+|x-y|^2)^{1/2}$. This gives~(\ref{unglpsi}).
\end{pf}

Note that the convergence in~(\ref{ellpconv}) holds pointwise; it is
easy to construct a sequence of continuous functions $(x_p)_{p \geq0}$ such
that $L_p(x_p) \rightarrow0$ and $\|x_p\|_\infty\rightarrow\infty$
as $p \to\infty$. Additionally to the obvious bound $L_p(x) \leq\|x\|
_\infty$, we will need the following
simple lemma which contains sort of a converse of this inequality.

%
\begin{lem} \label{ana1}
Let $\lambda$ denote the Lebesgue measure on the unit interval and let
$\gamma> 0$ and $0 < \vartheta< 1$.
\begin{longlist}
\item[(a)]
For all $f \in{\mathcal D}_r[0,1]$, we have
\[
\|f \|_\infty\geq\gamma\quad\Rightarrow\quad\lambda \bigl( \bigl\{t\dvtx \bigl|f(t)\bigr|
\geq (1-\vartheta) \gamma \bigr\} \bigr)\geq r.
\]
Moreover, for any $g \in\mathcal{C}[0,1]$, there exists a $\delta=
\delta(g,\gamma, \vartheta) > 0 $ such that
\[
\|f - g\|_\infty\geq\gamma\quad\Rightarrow\quad\lambda \bigl( \bigl\{t\dvtx \bigl|f(t)
- g(t)\bigr| \geq(1-\vartheta) \gamma \bigr\} \bigr) \geq\min(r, \delta).
\]
\item[(b)]
For all $f \in{\mathcal C}_r[0,1]$, we have
\[
\|f \|_\infty\geq\gamma\quad\Rightarrow\quad\lambda \bigl( \bigl\{t\dvtx \bigl|f(t)\bigr|
\geq (1-\vartheta) \gamma \bigr\} \bigr)\geq\frac{\vartheta}{2} r.
\]
Moreover, for $g \in\mathcal{C}[0,1]$, there exists a $\delta=
\delta(g,\gamma,\vartheta) > 0$
with
\[
\|f - g\|_\infty\geq\gamma\quad\Rightarrow\quad\lambda \bigl( \bigl\{t\dvtx \bigl|f(t)
- g(t)\bigr| \geq(1-\vartheta) \gamma \bigr\} \bigr) \geq\frac{\vartheta}{4} \min(r,
\delta). %
\]
\end{longlist}
\end{lem}

\begin{pf}
Ad (a): The first assertion is trivial. The second one follows by choosing
$\delta> 0$ small enough such that $|g(x) - g(y)| \leq\frac
{\vartheta
\gamma}{2}$ for all $|x-y| < \delta$.

Ad (b): For the first statement, assume $\|f\|_\infty\geq\gamma$ and
let $[e_0, e_1]$ be an interval where $f$ attains its maximum. A
geometric argument shows that
the quantity $\lambda(\{t \in[e_0,e_1]\dvtx  |f(t)| \geq
(1-\vartheta
) \gamma\} )$ is minimized when $f(e_0) = \gamma$ and $f(e_1) =
-(1-\vartheta)\gamma$. In this case, the quantity equals
$\vartheta r / (2(2-\vartheta))$ which implies the assertion since $0 <
\vartheta< 1$. Finally, the last statement follows from a combination
of the latter argument and by choosing $\delta> 0$ again
such that $|g(x) - g(y)| \leq\frac{\vartheta\gamma}{2}$ for all
$|x-y| < \delta$.
\end{pf}

%

We start with the proofs of Theorem~\ref{teozet} and its corollaries
in the continuous case.

\begin{pf*}{Proof of Theorem~\ref{teozet}}
For $r > 0, x \in\mathcal{C}[0,1]$ let $B_r(x) = \{ y \in\mathcal
{C}[0,1]\dvtx  \|y-x\|_\infty< r\}
$. According to Lemma~\ref{lemhelp}, we need to verify that
%
%
\begin{equation}
\label{ggnn} \mathbf{P} \biggl(X_n \in\bigcap
_{i \in I} B_{\gamma_i}(x_i) \biggr)
\rightarrow\mathbf{P} \biggl(X \in\bigcap_{i \in I}
B_{\gamma_i}(x_i) \biggr)
\end{equation}
for $I = \{1, \ldots, k\}$ and $x_1, \ldots, x_k \in S, \gamma_1,
\ldots, \gamma_k > 0$ such that $\mathbf{P} (X \in\break (\partial
B_{\gamma_i}(x_i) ) ) = 0$.
The lack of uniformity in~(\ref{ellpconv}) leads us to find lower and
upper bounds on the desired quantity. We will establish
%
%
\begin{equation}
\label{limsup} \limsup_{n \to\infty} \mathbf{P} \biggl(X_n
\in\bigcap_{i \in I} B_{\gamma_i}(x_i)
\biggr) \leq\mathbf{P} \biggl(X \in\bigcap_{i \in I}
B_{\gamma_i}(x_i) \biggr)
\end{equation}
and
%
%
\begin{equation}
\label{liminf} \liminf_{n \to\infty} \mathbf{P} \biggl(X_n
\in\bigcap_{i \in I} B_{\gamma_i}(x_i)
\biggr) \ge\mathbf{P} \biggl(X \in\bigcap_{i
\in I}
B_{\gamma_i}(x_i) \biggr)
\end{equation}
separated from each other.
To this end, it is sufficient to construct functions $g_{i,n}, \tilde
{g}_{i,n}\dvtx  \mathcal{C}[0,1]\to[0,1]$ satisfying
%
%
\begin{eqnarray}
\tilde{g}_{i,n}(x) &\leq&\mathbf{1}_{  B_{\gamma_i}(x_i)  }(x) \leq
g_{i, n}(x) \qquad\mbox{for all } x \in\mathcal{C}_{r_n}[0,1],
\label{Ei1}
\\
g_{i, n}(x),\tilde{g}_{i,n}(x) &\rightarrow&
\mathbf{1}_{  B_{\gamma
_i}(x_i)  }(x) \qquad\mbox{for all } x \in\mathcal{C}[0,1]
\setminus\partial B_{\gamma_i}(x_i) \label{Ei2}
\end{eqnarray}
and such that $a_{n} \prod_{i \in I} g_{i,n}$, $\tilde{a}_{n} \prod_{i
\in I} \tilde{g}_{i,n} \in\mathcal{F}_s$ for appropriate constants $a_{n},
\tilde{a}_{n} > 0$ such that
$a_n^{-1} \zeta_s(X_n,X) \to0$ and $\tilde{a}_n^{-1} \zeta_s(X_n,X)
\to0$ as $n \to\infty$. This is sufficient since we then may conclude
%
%
\begin{eqnarray}\label{rechnung1}
\mathbf{P} \biggl(X_n \in\bigcap
_{i \in I} B_{\gamma_i}(x_i) \biggr) &\leq&
\mathbf{E} \biggl[\prod_{i \in I} g_{i,n}(X_n)
\biggr]
\nonumber\\[-8pt]\\[-8pt]
&\leq&\mathbf{E} \biggl[\prod_{i \in I}
g_{i,n}(X) \biggr] + a_n^{-1}
\zeta_s(X_n,X)\nonumber
\end{eqnarray}
and
%
%
\begin{eqnarray}\label{rechnung2}
\mathbf{P} \biggl(X_n \in\bigcap
_{i \in I} B_{\gamma_i}(x_i) \biggr) &\geq&
\mathbf{E} \biggl[\prod_{i \in I} \tilde{g}_{i,n}(X_n)
\biggr]
\nonumber\\[-8pt]\\[-8pt]
&\geq&\mathbf{E} \biggl[\prod_{i \in I} \tilde
{g}_{i,n}(X) \biggr] - \tilde{a}_n^{-1}
\zeta_s(X_n,X).\nonumber
\end{eqnarray}
%
While this is the basic idea subsequently, the construction is slightly
more involved.

We first give a motivation of how to construct the functions $g_{i,n}$:
according to~(\ref{Ei2}), asymptotically, the functions $g_{i,n}$ have
to separate points $x \in\mathcal{C}[0,1]$ which are in $B_{\gamma_i}(x_i)$
from those which are not. This is why we use the $L_p$ norm.
Consider $\psi_{p, x_i}$ as introduced in Lemma~\ref{lemdiff}.
If $x \in\overline{B_{\gamma_i}(x_i)}$, then $\psi_{p, x_i}(x) \leq
(1+\gamma_i^2)^{1/2}$ whereas if
$x \notin\overline{B_{\gamma_i}(x_i)}$ then $\liminf_{p \rightarrow
\infty} \psi_{p, x_i}(x) > (1+ \gamma_i^2)^{1/2}$.

Let $\varphi\dvtx  \mathbb{R}\to[0,1]$ be a three times continuously
differentiable function with $\varphi(u) = 1$ for $u \leq0$ and
$\varphi(u) = 0$ for $u \geq1$. For $\varrho\in\mathbb{R}$ and
$\eta>0$, we
denote $\varphi_{\varrho, \eta}\dvtx  \mathbb{R}^+ \to[0,1]$ by
$\varphi
_{\varrho,
\eta}(u) = \varphi((u - \varrho)/ \eta)$.

Let $g_i(x) = \varphi_{(1+\gamma_i^2)^{1/2}, \eta}(\psi_{p,x_i}(x))$.
Let $g_{i,n} = g_i$ with $\eta= \eta_n \downarrow0$ and $p = p_n
\uparrow\infty$. Then $g_{i,n}$ has the properties in~(\ref{Ei1}) and
(\ref{Ei2}).

We do not know how to construct functions $\tilde{g}_{i,n}$ with the
properties~(\ref{Ei1}) and~(\ref{Ei2}). Instead, we construct functions
$\bar g_{i,n}$ satisfying related conditions:
let $0 < \vartheta< 1$ and $x \in\mathcal{C}_{r_n}[0,1]$. 
By Lemma~\ref{ana1}(b), we can find $\delta= \delta(\vartheta)$ (also
depending on $x_1, \ldots, x_k, \gamma_1,\ldots,\gamma_k$ which are
kept fixed) with
%
%
\begin{eqnarray}
\label{trick17}
&& \bigl\{ \|x - x_i \|_\infty\geq
\gamma_i \bigr\}\nonumber
\\
&&\qquad \subseteq \biggl\{ \lambda \bigl( \bigl\{ t\dvtx \bigl|x(t)
- x_i(t)\bigr| \geq\gamma_i(1-\vartheta) \bigr\} \bigr)
\geq \frac{\vartheta}{4}\min(r_n, \delta) \biggr\}
\nonumber\\[-8pt]\\[-8pt]
&&\qquad \subseteq \biggl\{ \psi_{p,x_i}(x) \geq \bigl(1 +
\gamma_i^2(1- \vartheta)^2
\bigr)^{1/2} \biggl(\frac{\vartheta}{4} \min(r_n, \delta)
\biggr) ^{1/p} \biggr\}\nonumber
\\
&&\qquad \subseteq \bigl\{\bar g_{i,n}(x) = 0 \bigr\}\nonumber
\end{eqnarray}
with $\bar g_{i,n}(x) = \varphi_{(1 + \gamma_i^2(1-\vartheta)^2)^{1/2}
(\vartheta\min(r_n, \delta)/4)^{1/p}- \eta, \eta}(\psi_{p,x_i}(x))$.
This gives~(\ref{Ei1}).
$\bar g_{i,n}$ does not fulfill (\ref{Ei2}), but we have
\[
\bar g_{i,n}(x) \rightarrow\mathbf{1}_{  B_{\gamma_i(1-\vartheta
)}(x_i)  }(x)
\]
for $x \in\mathcal{C}[0,1]\setminus\partial B_{\gamma
_i(1-\vartheta)}(x_i)$
and $ p = p_n \uparrow\infty, \eta= \eta_n \downarrow0$ such that
$r_n^{1/p_n} \rightarrow1$.
This gives for every $0 < \vartheta< 1$ with $\mathbf{P} (X \in
\partial B_{\gamma_i(1-\vartheta)}(x_i) ) = 0$ for all $i \in I$
\[
\lim_{n \rightarrow\infty} \mathbf{E} \biggl[\prod
_{i \in I} \bar g_{i,n}(X) \biggr] = \mathbf{P} \biggl(X
\in\bigcap_{i \in I} B_{\gamma_i(1-\vartheta
)}(x_i)
\biggr).
\]
Assuming that $\bar a_n \prod_{i \in I} \bar g_{i,n} \in\mathcal
{F}_s$ and
letting $n$ tend to infinity~(\ref{rechnung2}) rewrites as
%
%
\begin{eqnarray}\label{eq1234}
&& \liminf_{n\to\infty} \mathbf{P} \biggl(X_n
\in\bigcap_{i \in I} B_{\gamma_i}(x_i)
\biggr)
\nonumber\\[-8pt]\\[-8pt]
&&\qquad \geq\mathbf{P} \biggl(X \in\bigcap_{i
\in I}B_{\gamma_i(1-\vartheta)}(x_i) \biggr)- \limsup_{n\to
\infty}\bar a_n^{-1} \zeta_s(X_n,X),\nonumber
\end{eqnarray}
where $\bar a_n$ may depend on $\vartheta$ and $\delta$.
Below, we will see that the error term on the right-hand side of (\ref
{eq1234}) vanishes as $n \to\infty$ uniformly in $\vartheta, \delta$.
So, choosing $\vartheta\downarrow0$ such that $\mathbf{P} (X
\in\partial B_{\gamma_i(1-\vartheta)}(x_i) ) = 0$ for all $i
\in I$ the assertion
\[
\liminf_{n\to\infty} \mathbf{P} \biggl(X_n \in\bigcap
_{i \in I} B_{\gamma_i}(x_i) \biggr)
\geq\mathbf{P} \biggl(X \in\bigcap_{i
\in I}
B_{\gamma_i}(x_i) \biggr) %
\]
follows.

It remains to show that the error terms vanish in the limit.
By Lemma~\ref{lemdiff} $g(x) = \varphi_{\varrho, \eta}(\psi_{p,y}(x))$
and using the mean value theorem,
we obtain for $m = 0, 1, 2$
\[
\bigl\| g^{(m)}(x+h) - g^{(m)}(x) \bigr\| \leq C_m
p^m \eta^{- (m+1)} \|h\| _\infty^\alpha
\]
for $p \geq4, \eta< 1$ and some constants $C_m > 0$.
It is easy to check that the same is valid for products of functions of
form $g$ with different constants, independent of the parameters.
It follows that both error terms in~(\ref{rechnung1}) and (\ref
{eq1234}) are bounded by $C'_m p_n^m\eta_n^{-(m+1)} \zeta_{s}(X_n, X)$
for all $n$,
uniformly in $\vartheta, \delta$, where $C_m'$ denotes a fixed constant
for each $m \in\{0,1,2\}$. By~(\ref{eqzetas2}), we can choose $p_n
\uparrow\infty$ and $\eta_n \downarrow0$ such that both $r_n^{1/p_n}
\rightarrow1$ and the error terms vanish in the limit.
\end{pf*}

\begin{pf*}{Proof of Corollary~\ref{Cor4}}
Again, according to Lemma~\ref{lemhelp}, we only have to verify (\ref
{ggnn}), for which
we modify the proof of Theorem~\ref{teozet}: first note that the
assumption of piecewise linearity of $X_n$ and the convergence rate for
$\zeta_s(X_n,X)$ are not necessary for the upper bound
\[
\limsup_{n\to\infty} \mathbf{P} \biggl(X_n \in\bigcap
_{i \in I} B_{\gamma_i}(x_i) \biggr)
\leq\mathbf{P} \biggl(X \in\bigcap_{i
\in I}
B_{\gamma_i}(x_i) \biggr). %
\]
For the lower bound let $\varepsilon> 0$ and note that
\begin{eqnarray*}
\mathbf{P} \biggl(X_n \in\bigcap_{i \in I}
B_{\gamma_i}(x_i) \biggr) &\geq&\mathbf{P}
\biggl(X_n \in\bigcap_{i \in I}
B_{\gamma
_i}(x_i) \cap \bigl\{Y_n \in\mathcal
{C}_{r_n}[0,1] \bigr\} \biggr).
\end{eqnarray*}
We modify the functions $\bar g_{i,n}(x)$. Let $0 < \gamma_{K_i} <
\gamma_i$ such that
\[
\mathbf{P} \biggl(X \in\bigcap_{i \in I}
B_{\gamma_{K_i}}(x_i) \biggr) \geq\mathbf{P} \biggl(X \in\bigcap
_{i \in I} B_{\gamma
_i}(x_i) \biggr) -
\varepsilon
\]
and\vspace*{2pt} $\mathbf{P} (X \in\partial B_{\gamma_{K_i}}(x_i) )
=0$ for all $i$. Let
$0 < \vartheta< 1$ and $n_0$ be large enough such that
$\varrho_n = \|h_n-h\|_{\infty} < \min_i ( \gamma_{K_i}
(1-\vartheta)
\wedge\gamma-\gamma_{K_i})$ and
$\mathbf{P} (Y_n \notin\mathcal{C}_{r_n}[0,1] ) <
\varepsilon$ for all $n
\geq n_0$. 
By Lemma~\ref{ana1}(b),
there exists $\delta= \delta(\vartheta)$ such that for $y \in
\mathcal
{C}_{r_n}[0,1]$ with $x = y + h_n$ and $n \geq n_0$
%
\begin{eqnarray*}
&& \bigl\{ \|x - x_i \|_\infty\geq\gamma_i \bigr\}
\\[-2pt]
&&\qquad \subseteq \bigl\{ \|y + h - x_i\|_\infty\geq
\gamma_{K_i} \bigr\}
\\[-2pt]
&&\qquad \subseteq \biggl\{ \lambda \bigl( \bigl\{t\dvtx \bigl|y(t) + h(t) -
x_i(t)\bigr| \geq\gamma_{K_i} (1-\vartheta) \bigr\} \bigr)
\geq\frac{\vartheta}{4}\min(r_n, \delta) \biggr\}
\\[-2pt]
&&\qquad  \subseteq \biggl\{ \lambda \bigl( \bigl\{t\dvtx \bigl|x(t) - x_i(t)\bigr|
\geq \gamma_{K_i} (1-\vartheta) - \varrho_n \bigr\} \bigr)
\geq \frac{\vartheta}{4}\min(r_n, \delta) \biggr\}
\\[-2pt]
&&\qquad \subseteq \biggl\{ \psi_{p,x_i}(x) \geq \bigl(1 + \bigl(
\gamma_{K_i} (1-\vartheta) - \varrho_n \bigr)^2
\bigr)^{1/2} \biggl( \frac{\vartheta}{4}\min(r_n, \delta)
\biggr) ^{1/p} \biggr\}
\\[-2pt]
&&\qquad \subseteq \bigl\{\bar g_{i,n}(x) = 0 \bigr\}
\end{eqnarray*}
with $\bar g_{i,n}(x) = \varphi_{(1 + (\gamma_{K_i} (1-\vartheta
)-\varrho_n)^2)^{1/2} (\vartheta\min(r_n, \delta)/4 )^{1/p}- \eta,
\eta
}(\psi_{p,x_i}(x))$.
Hence,
\begin{eqnarray*}
\mathbf{P} \biggl(X_n \in\bigcap_{i \in I}
B_{\gamma_i}(x_i) \biggr)
\geq \mathbf{E} \biggl[\prod
_{i \in I} \bar g_{i,n}(X_n)
\mathbf{1}_{\{Y_n \in\mathcal{C}_{r_n}[0,1]\}} \biggr]
&\geq& \mathbf{E} \biggl[\prod_{i \in I} \bar
g_{i,n}(X_n) \biggr] - \varepsilon
\end{eqnarray*}
for $n \geq n_0$. The upper bound of the error term $\bar a_n^{-1}
\zeta
_s(X_n,X)$ is a function of $p$ and $\eta$ so it is uniform in
$\varrho
_n, \vartheta, \delta$.
Following the same lines as in the proof of Theorem~\ref{teozet} gives\vspace*{-2pt}
\begin{eqnarray*}
\liminf_{n \rightarrow\infty} \mathbf{P} \biggl(X_n \in\bigcap
_{i\in I} B_{\gamma_i}(x_i) \biggr) &\geq&\mathbf{P} \biggl(X \in\bigcap_{i \in I}
B_{\gamma_{K_i}}(x_i) \biggr) - \varepsilon
\\[-3pt]
& \geq& \mathbf{P} \biggl(X \in\bigcap_{i \in I}
B_{\gamma_i}(x_i) \biggr) - 2 \varepsilon.
\end{eqnarray*}
Since $\varepsilon> 0$ was arbitrary, the result follows. 
\end{pf*}

\begin{pf*}{Proof of Corollary~\ref{Cor1}}
In the setting of the proof of Theorem~\ref{teozet}, (\ref{rechnung1})~rewrites as
\begin{eqnarray*}
&& \mathbf{P} \biggl(X_n \in\bigcap_{i \in I}
B_{\gamma_i}(x_i) \biggr)
\\[-2pt]
&&\qquad \leq\mathbf{E} \biggl[\prod_{i \in I}
g_{i,n}(X_n) \biggr] \leq\mathbf{E} \biggl[\prod
_{i \in I} g_{i,n}(Y_n) \biggr] +
a_n^{-1} \zeta_s(X_n,Y_n)
\\[-2pt]
&&\qquad = \mathbf{E} \biggl[\prod_{i \in I}
g_{i,n}(Y_n) \biggr] - \mathbf{E} \biggl[\prod
_{i \in I} g_{i,n}(X) \biggr] + \mathbf{E} \biggl[\prod
_{i \in I} g_{i,n}(X) \biggr] +
a_n^{-1} \zeta_s(X_n,Y_n).
\end{eqnarray*}
We may choose $Y_n \rightarrow X$ almost surely. On the event $\{X \in
B_{\gamma_i}(x_i)\}$, we have $\lim_n g_{i,n}(Y_n) = \lim_n g_{i,n}(X)
= 1$ and on $\{X \notin\overline{B_{\gamma_i}(x_i)} \}$ we have
$\lim_n g_{i,n}(Y_n) = \lim_n g_{i,n}(X) = 0$. Since $\mathbf{P} (X
\in\partial B_{\gamma_i}(x_i) ) = 0$, it follows
\[
\prod_{i \in I} g_{i,n}(Y_n) -
\prod_{i \in I} g_{i,n}(X) \rightarrow0
\]
for $n \rightarrow\infty$ almost surely and dominated convergence yields
\[
\limsup_{n \to\infty} \mathbf{P} \biggl(X_n \in\bigcap
_{i \in I} B_{\gamma_i}(x_i) \biggr)
\leq\mathbf{P} \biggl(X \in\bigcap_{i
\in I}
B_{\gamma_i}(x_i) \biggr),
\]
just like in the proof of Theorem~\ref{teozet}. The lower bound
follows similarly.
\end{pf*}

We now head over to the case of c\`{a}dl\`{a}g functions. We only
discuss the approach in the proof of Theorem~\ref{teozet}.
Following exactly the same arguments as in the continuous case and
using the additional statements of Lemmas~\ref{lemdiff}~and~\ref{ana1}(a), it is easy to see that we also obtain~(\ref{ggnn}) if the
balls $B_{\gamma_i}(x_i)$ are defined with the uniform metric in
$\mathcal{D}[0,1]$.
Remember that we still have $x_i \in\mathcal{C}[0,1]$. Thus, Lemma
\ref{lemhelp}
yields the assertion.

The proof of Theorem~\ref{teosup} is close to the one of Lemma 5.3 in
\cite{DrJaNe08}. The $L_p$ approximation of the supremum norm
complicates the argument slightly. We only give all details in the
continuous case.

\begin{pf*}{Proof of  Theorem~\ref{teosup}}
Suppose $0 \leq s \leq3$ and that the first assumption of Theorem~\ref{teosup} is satisfied.
Let $\kappa\dvtx  \mathbb{R}^+_0 \to\mathbb{R}^+_0$ be a smooth,
monotonic function with
$\kappa(u) = 0$ for
$u \leq\frac{1} 2$ and $\kappa(u) = u^s$ for $u \geq1$. We may as well
assume that the interpolation for $\frac{1} 2\le u\le1$ is done
smoothly such that we have $\kappa(u) \le u^s$ for $\frac{1} 2\le u\le
1$, thus $\kappa(u) \le u^s$ for all $u\in\mathbb{R}^+_0$. Let
$f,f^{(p)}\dvtx \mathcal{C}[0,1]
\to\mathbb{R}$ be given by
\begin{eqnarray*}
f(x) &=& \kappa\bigl( \|x \|_\infty\bigr),
\\
f^{(p)}(x)& =& \kappa \bigl(L_p(x) \bigr).
\end{eqnarray*}
By Lemma~\ref{lemdiff}, the restrictions of $L_p$ and $f^{(p)}$ to
$\mathcal{C}[0,1]\setminus\{\mathbf{0}\}$ are smooth. Furthermore, all
derivatives of $f^{(p)}$ vanish for $\| x \|_\infty< 1/2$ which implies
that $f^{(p)}$ is smooth on $\mathcal{C}[0,1]$. Again, by Lemma~\ref{lemdiff} it is
easy to check that for any $k \in\{1, \ldots, m+1\}$,
\[
\bigl\|D^k f^{(p)}(x)\bigr\| = O \bigl(p^{k-1}
L_p^{s-k}(x) \bigr),
\]
uniformly in $p$ and $x \in\mathcal{C}[0,1]$. Let $x,y \in\mathcal
{C}[0,1]$ with $L_p(x),
L_p(y) \leq2 \|x-y\|_\infty$. Then
\begin{eqnarray*}
\bigl\|D^m f^{(p)}(x) - D^m f^{(p)}(y) \bigr\|
&\leq&\bigl\|D^m f^{(p)}(x)\bigr\| +\bigl\| D^m
f^{(p)}(y) \bigr\|
\\
&= &O \bigl(p^{m-1} \|x-y\|_\infty^\alpha
\bigr).
\end{eqnarray*}
Conversely let $2 \|x-y\|_\infty\leq L_p(x)$ [the case $2 \|x-y\|
_\infty\leq L_p(y)$ being analogous]. Then, by the mean value theorem,
there exists $z \in[x,y]:=\{\lambda x +(1-\lambda)y | \lambda\in
[0,1]\}$, such that
\begin{eqnarray*}
\bigl\|D^m f^{(p)}(x) - D^m f^{(p)}(y) \bigr\| &
=& \bigl\|D^{m+1} f^{(p)}(z)\bigr\| \cdot\| x-y \|_\infty
\\
& =& O \bigl(p^{m} L_p^{\alpha-1}(x) \bigr) \cdot\|x-y
\|_\infty
\\
&=&  O \bigl(p^m \| x-y\| _\infty^\alpha
\bigr).
\end{eqnarray*}
Hence, there is a constant $c > 0$ such that $c p^{-m}f^{(p)} \in
\mathcal{F}_s$ for all $p \geq4$. We define, for $r>0$,
\begin{eqnarray*}
f_r(x) &:=& c r^s f(x/r),
\\
f^{(p)}_r(x)&:=& c r^s f^{(p)} (x/r).
\end{eqnarray*}
Then $p^{-m}f_r^{(p)} \in\mathcal{F}_s$. Furthermore, $f_r(x)$ and
$f_r^{(p)}(x)$ are bounded by $c\|x\|^s$ for all $x\in\mathcal
{C}[0,1]$, uniformly
in $p$.
For any fixed $x$ we have $f_r(x) \rightarrow0$ and $\sup_{p \geq4}
f_r^{(p)}(x) \rightarrow0$ as $r \rightarrow\infty$. Hence, by
$\mathbf{E} [\| X\|^s ]<\infty$ and dominated
convergence this implies
%
%
\begin{equation}
\label{eq22} \mathbf{E} \Bigl[\sup_{p \geq4}f_r^{(p)}(X)
\Bigr] \rightarrow0 \qquad(r\to\infty).
\end{equation}
By the definition of $\zeta_s$, we have
\[
\mathbf{E} \bigl[f_r^{(p)}(X_n) \bigr] \leq
\mathbf{E} \bigl[f_r^{(p)}(X) \bigr] + p^m
\zeta_s(X_n,X).
\]
By the definition of $f_r$, for
$\|x \| > r $ we have $\|x\|^s = c^{-1} f_r(x)$. Hence,
%
%
\begin{eqnarray} \label{eq23}
&& \mathbf{E} \bigl[\|X_n\|_\infty^s
\mathbf{1}_{ \{ \|X_n\|_\infty
\geq2r \} } \bigr]\nonumber
\\
&&\qquad = c^{-1} \mathbf{E}
\bigl[f_r(X_n) \mathbf{1}_{ \{ \|X_n\|_\infty
\geq2r \} } \bigr]\nonumber
\\
&&\qquad \leq c^{-1} \mathbf{E} \bigl[f_r^{(p)}(X_n)
\bigr] + c^{-1} \bigl(\mathbf{E} \bigl[ \bigl(f_r(X_n)
- f_r^{(p)}(X_n) \bigr) \mathbf{1}_{ \{ \|X_n\|
_\infty\geq2r \} }
\bigr] \bigr)
\\
&&\qquad \leq c^{-1} \mathbf{E} \bigl[f_r^{(p)}(X)
\bigr] + c^{-1} p^m \zeta_s(X_n,X)
\nonumber
\\
&&\quad\qquad{} + c^{-1} \bigl(\mathbf{E} \bigl[ \bigl(f_r(X_n)
- f_r^{(p)}(X_n) \bigr) \mathbf{1}_{ \{ \|X_n\| _\infty\geq2r \} }
\bigr] \bigr).\nonumber
\end{eqnarray}
Now, let $\varepsilon> 0$ be arbitrary. By~(\ref{eq22}), fix $r > 0$
such that $\mathbf{E} [f_r^{(p)}(X) ] < \varepsilon$ for
all $p \geq4$.
Additionally, by the given assumptions there exists a sequence $p_n
\uparrow\infty$ such that
\[
\frac{\log r_n}{p_n} \rightarrow0, \qquad p_n^m
\zeta_s(X_n,X) \rightarrow0 \qquad(n \rightarrow\infty).
\]
Therefore, let $N_0$ be large enough such that $p_n^m \zeta_s(X_n,X) <
\varepsilon$ for all $n \geq N_0$. It remains to bound the third
summand in~(\ref{eq23}).
Using Lemma~\ref{ana1}(a), 
piecewise linearity of $X_n$ implies that for all $0 < \vartheta< 1$,
\[
L_p(X_n) \geq\|X_n\|_\infty(1-
\vartheta) \biggl( \frac{\vartheta
r_n}{2} \biggr)^{1/p_n}.
\]
In particular, we have $L_p(X_n) \geq\frac{\|X_n\|_\infty}{2}$ for all
$n$ sufficiently large.
For those $n$ and $\| X_n \| > 2r $ we also have $f_r^{(p)}(X_n) =
cL_p^s(X_n)$. This yields
%
%
\begin{eqnarray}
&& \mathbf{E} \bigl[ \bigl(f_r(X_n) -
f_r^{(p)}(X_n) \bigr) \mathbf{1}_{ \{ \|X_n\|
_\infty\geq2r \} }
\bigr]
\nonumber\\[-8pt]\label{eq27}\\[-8pt]
&&\qquad  = c \mathbf{E} \bigl[ \bigl( \|X_n\|_\infty^s
- L^s_{p}(X_n) \bigr) \mathbf{1}_{ \{ \|X_n\|_\infty\geq2r \} }
\bigr] \nonumber
\\
&&\qquad \leq c \bigl(1 - 2^{-s} \bigr) \mathbf{E} \bigl[\|X_n
\|_\infty^s \mathbf{1}_{ \{ \|X_n\|_\infty\geq2r \} } \bigr] \label{eq24}
\end{eqnarray}
for all $n$ sufficiently large. Increasing $N_0$ if necessary,
inserting~(\ref{eq24}) into~(\ref{eq23}) and rearranging terms implies
\[
\mathbf{E} \bigl[\|X_n\|_\infty^s
\mathbf{1}_{ \{ \|X_n\|_\infty
\geq2r \} } \bigr] \leq2^{1+s} c^{-1}
\varepsilon
\]
for all $n \geq N_0$. Since $\varepsilon$ was arbitrary, the assertion
follows.

Now, suppose the second assumption is satisfied. Then we
have to modify the
last part of the proof. In~(\ref{eq27}), we can decompose
\[
L_p^s(X_n) = L_p^s(X_n)
\mathbf{1}_{\{ Y_n \in\mathcal{C}_{r_n}[0,1]\}
} + L_p^s(X_n)
\mathbf{1}_{\{ Y_n \notin\mathcal{C}_{r_n}[0,1]\}}.
\]
Using $L_p^s(X_n) \leq\|X_n\|_\infty^s$, the assumptions guarantee the
expectation of the second term to be small in the limit $n \to\infty$.
For the first
one, using similar arguments as above, given $\{ Y_n \in\mathcal
{C}_{r_n}[0,1]\}$, we find
\[
L_p(X_n) \geq\frac{\|X_n\|_\infty}{2} - 2
\varrho_n
\]
with $\varrho_n = \|h_n - h\|_\infty$ for all $n$ sufficiently large.
Proceeding as in the first part, we obtain the result.
Given the third
assumption, it only remains to bound $\mathbf{E} [f_r^{(p)}(Y_n)
]$ which
appears instead of $\mathbf{E} [f_r^{(p)}(X) ]$ by
$\mathbf{E} [f_r^{(p)}(Z) ]$ in~(\ref{eq23}).
\end{pf*}


\section{The contraction method} \label{nnnCM}
In this section, the contraction method is developed first for a
general separable Banach space $B$.
Then the framework is specialized to the cases $(\mathcal{C}[0,1], \|
\cdot\|_\infty)$
and $(\mathcal{D}[0,1], d_{\mathrm{sk}})$. For this section, $B$ will always
denote a
separable Banach space or $(\mathcal{D}[0,1], d_{\mathrm{sk}})$.

We recall the recursive equation~(\ref{rec2}). We have
%
%
\begin{equation}
\label{rec} X_n \stackrel{d} {=} \sum
_{r=1}^K A_r^{(n)}
X_{I_r^{(n)}}^{(r)} + b^{(n)}, \qquad n \geq
n_0,
\end{equation}
where $A_1^{(n)}, \ldots, A_K^{(n)}$ are random continuous linear
operators, $b^{(n)}$ is a $B$-valued random variable,
$(X_n^{(1)})_{n\ge0},\ldots,(X_n^{(K)})_{n\ge0}$ are distributed like\break
$(X_n)_{n\ge0}$, and $I^{(n)}=(I_1^{(n)}, \ldots, I_K^{(n)})$ is a
vector of random integers in $\{0,\ldots,n\}$. Moreover, $(A_1^{(n)},
\ldots, A_K^{(n)}, b^{(n)}, I^{(n)})$, $(X_n^{(1)})_{n\ge0}, \ldots,
(X_n^{(K)})_{n\ge0}$ are independent and $n_0\in\mathbb N$.

Recall that in order to be a random continuous linear operator, $A$ has
to take values in the set of continuous endomorphisms on $\mathcal{C}[0,1]$,
respectively, the set of norm-continuous endomorphisms that are
continuous with respect to $d_{\mathrm{sk}}$ on $\mathcal{D}[0,1]$ such that
$A(x)(t)$ is a
real-valued random variable for all $x \in\mathcal{C}[0,1]$,
respectively, $x\in\mathcal{D}[0,1]$
and $t\in[0,1]$. In $\mathcal{D}[0,1]$, we additionally have to
guarantee $\|A\|
_{\mathrm{op}}$ to be a real-valued random variable; see Section~\ref
{subsecdsk}.

We make assumptions about the moments and the asymptotic behavior of
the coefficients $A_1^{(n)}, \ldots, A_K^{(n)}$, $b^{(n)}$. For a
random continuous linear operator $A$, we write
\[
\|A\|_s:= \mathbf{E} \bigl[\|A\|_\mathrm{op}^s
\bigr]^{1\wedge(1/s)}.
\]
We consider the following conditions with an $s > 0$:
\begin{longlist}[(C2)]
\item[(C1)] We have $\|X_0\|_s,\ldots,\|X_{n_0-1}\|_s$, $\|
A_r^{(n)}\|_s, \|b^{(n)}\|_s<\infty$ for all $r=\break 1,\ldots,K$ and $n\ge
0$ and there exist random continuous linear operators $A_1, \ldots,
A_K$ on $B$ and a $B$-valued random variable $b$ such that, as $n\to
\infty$,
%
%
\begin{equation}
\label{eqrate1} \gamma(n):= \bigl\|b^{(n)} - b\bigr\|_s+\sum
_{r=1}^K \bigl(\bigl\|A_r^{(n)} -
A_r\bigr\| _s + \bigl\llVert{\mathbf1}_{\{I^{(n)}_r \le n_0\}}
A^{(n)}_r \bigr\rrVert_s \bigr) \to0
\end{equation}
and for all $\ell\in\mathbb N$,
%
%
\begin{equation}
\label{inicond} \mathbf{E} \bigl[{\mathbf1}_{\{I^{(n)}_r \in\{0,\ldots,\ell
\}\cup\{
n\}\}} \bigl\|
A^{(n)}_r \bigr\| _\mathrm{op}^s \bigr]
\to0.
\end{equation}
\end{longlist}

\begin{longlist}[(C2)]
\item[(C2)] We have
\[
L:= \sum_{r=1}^K \mathbf{E} \bigl[
\|A_r\|_\mathrm{op}^s \bigr] < 1.
\]
\end{longlist}
The limits of the coefficients determine the limiting operator $T$ from
(\ref{limitmap}):
%
%
\begin{eqnarray} \label{deftm}
T\dvtx \mathcal{M}(B) &\to&\mathcal{M}(B),
\nonumber\\[-8pt]\\[-8pt]
\mu&\mapsto&\mathcal{L} \Biggl( \sum_{r=1}^K
A_r Z^{(r)} + b \Biggr),\nonumber
\end{eqnarray}
where $(A_1, \ldots, A_K,b)$, $Z^{(1)}, \ldots, Z^{(K)}$ are
independent and $Z^{(1)}, \ldots, Z^{(K)}$ have distribution $\mu$.
\begin{longlist}[(C3)]
\item[(C3)] The map $T$ has a fixed point $\eta\in{\mathcal
M}_s(B)$, such that ${\mathcal L}(X_n) \in{\mathcal M}_s(\eta)$ for
all $n\ge n_0$.
\end{longlist}
The existence of a fixed point is not in general implied by contraction
properties of~$T$ with respect to a Zolotarev metric due to the lack of
knowledge of completeness of the metric on a the space $B$. However, we
can argue that there is at most one fixed point of $T$ in ${\mathcal
M}_s(\eta)$:

%
\begin{lem} \label{lemuniqueness}
Assume the sequence $(X_n)_{n\ge0}$ satisfies~(\ref{rec}). Under
conditions \textup{(C1)--(C3)}, we have $T({\mathcal M}_s(\eta))
\subseteq{\mathcal M}_s(\eta)$ and
\[
\zeta_s \bigl(T(\mu), T(\lambda) \bigr) \leq L \zeta_s(
\mu, \lambda)\qquad\mbox{for all } \mu,\lambda\in{\mathcal M}_s(
\eta).
\]
In particular, the restriction of $T$ to ${\mathcal M}_s(\eta)$ is a
contraction and has the unique fixed-point $\eta$.
\end{lem}

\begin{pf}
Let $\mu\in{\mathcal M}_s(\eta)$. Recall that we have $s=m+\alpha$
with $m\in\mathbb N_0$ and $\alpha\in(0,1]$. We introduce an
accompanying sequence
%
%
\begin{equation}
\label{defacc} \qquad Q_n:= \sum_{r=1}^K
A_r^{(n)} \bigl({\mathbf1}_{ \{ I^{(n)}_r<
n_0 \}}
X^{(r)}_{I^{(n)}_r} + {\mathbf1}_{ \{ I^{(n)}_r\ge
n_0 \}} Z^{(r)}
\bigr) + b^{(n)}, \qquad n\ge n_0,
\end{equation}
where $(A^{(n)}_1, \ldots, A^{(n)}_K, b^{(n)})$, $Z^{(1)}, \ldots,
Z^{(K)}$ are independent and $Z^{(1)}, \ldots, Z^{(K)}$ have
distribution $\mu$.

We first show that $\mathcal{L}(Q_n) \in{\mathcal M}_s(\eta)$ for
all $n\ge n_0$.
Condition (C1), conditioning on the coefficients and Minkowski's
inequality, implies
$\mathbf{E} [\|Q_n\|_\infty^s ]<\infty$ for all $n$.
For $s\le1$, we already obtain $\mathcal{L}(Q_n) \in{\mathcal
M}_s(\eta)$.

For $s>1$, we choose arbitrary
$1 \leq k \leq m$ and multilinear and
bounded $f\dvtx  B^k \to\mathbb{R}$. We have
\begin{eqnarray*}
\mathbf{E} \bigl[f(Z, \ldots, Z) \bigr] & =& \mathbf{E} \bigl[f(X_n,
\ldots, X_n) \bigr]
\\
& =& \mathbf{E} \Biggl[f \Biggl(\sum_{r=1}^K
A_r^{(n)} X_{I_r^{(n)}}^{(r)} +
b^{(n)}, \ldots, \sum_{r=1}^K
A_r^{(n)} X_{I_r^{(n)}}^{(r)} +
b^{(n)} \Biggr) \Biggr].
\end{eqnarray*}
To show $\mathcal{L}(Q_n) \in{\mathcal M}_s(\eta)$, we need to
verify that
the latter display is equal to
$\mathbf{E} [f(Q_n,\ldots, Q_n) ]$.
Since $f$ is multilinear, both terms can be expanded as a sum and it
suffices to show that the corresponding summands are equal:
%
%
\begin{eqnarray}
\label{eq7} \mathbf{E} \bigl[f \bigl(C_{j_1}^{(n)},
\ldots, C_{j_k}^{(n)} \bigr) \bigr] &= & \mathbf{E} \bigl[f
\bigl(D_{j_1}^{(n)}, \ldots, D_{j_k}^{(n)}
\bigr) \bigr],
\end{eqnarray}
where $j_1,\ldots,j_k\in\{1,\ldots,K\}$ and for each $i\in\{
1,\ldots,k\}$ we either have
%
%
\begin{equation}
\label{eq7aa} \qquad C_{j_i}^{(n)}= A_{j_i}^{(n)}
X_{I_{j_i}^{(n)}}^{(j_i)}\quad\mbox{and}\quad D_{j_i}^{(n)}=
A_{j_i}^{(n)} \bigl({\mathbf1}_{ \{
I^{(n)}_{j_i}< n_0 \}}
X^{(j_i)}_{I^{(n)}_{j_i}} + {\mathbf1}_{
\{ I^{(n)}_{j_i}\ge n_0 \}} Z^{(j_i)}
\bigr)
\end{equation}
or
%
%
\begin{equation}
\label{eq7bb} C_{j_i}^{(n)}=b^{(n)} \quad\mbox{and}
\quad D_{j_i}^{(n)}=b^{(n)}.
\end{equation}
The equality in~(\ref{eq7}) is obvious for the case where we have
(\ref
{eq7bb}) for all $i=1,\ldots,k$. For the other cases, we have
(\ref{eq7aa}) for at least $1 \leq\ell\leq k$ arguments of $f$, say,
for simplicity of presentation, for the first $\ell$ with
$1 \leq\ell_1 < \cdots< \ell_d = \ell$
such that
$j_s = j_{\ell_i}$ for all $s = \ell_{i-1}+1, \ldots, \ell_{i}, i = 1,
\ldots, d$ and $j_{\ell_i}$ pairwise different for $i = 1, \ldots, d$
(by convention $\ell_0:=0$).
The claim in~(\ref{eq7}) reduces to
%
%
\begin{eqnarray}\label{eq71}
&& \mathbf{E} \bigl[f \bigl(C_{j_{\ell_1}}^{(n)},
\ldots, C_{j_{\ell
_1}}^{(n)}, C_{j_{\ell_2}}^{(n)},
\ldots, C_{j_{\ell_d}}^{(n)}, b^{(n)},\ldots,
b^{(n)} \bigr) \bigr]
\nonumber\\[-8pt]\\[-8pt]
&&\qquad = \mathbf{E} \bigl[f \bigl(D_{j_{\ell_1}}^{(n)}, \ldots,
D_{j_{\ell
_1}}^{(n)}, D_{j_{\ell_2}}^{(n)}, \ldots,
D_{j_{\ell
_d}}^{(n)},b^{(n)},\ldots, b^{(n)} \bigr)
\bigr].\nonumber
\end{eqnarray}
We will prove that, for each $p \in\{1, \ldots, d \}$,
%
%
\begin{eqnarray}
\label{eq72} \qquad && \mathbf{E} \bigl[f \bigl(C_{j_{\ell_1}}^{(n)},
\ldots, C_{j_{\ell
_{p-1}}}^{(n)}, C_{j_{\ell_p}}^{(n)},
\ldots, C_{j_{\ell_p}}^{(n)}, D_{j_{\ell_{p+1}}}^{(n)},
\ldots, D_{j_{\ell
_d}}^{(n)},b^{(n)},\ldots, b^{(n)}
\bigr) \bigr]
\\
&&\qquad = \mathbf{E} \bigl[f \bigl(C_{j_{\ell_1}}^{(n)}, \ldots,
C_{j_{\ell
_{p-1}}}^{(n)}, D_{j_{\ell_p}}^{(n)}, \ldots,
D_{j_{\ell_p}}^{(n)},
\nonumber\\[-8pt]\\[-8pt]
&&\hspace*{66pt} D_{j_{\ell_{p+1}}}^{(n)}, \ldots,D_{j_{\ell
_d}}^{(n)},b^{(n)},\ldots, b^{(n)} \bigr)\bigr],\nonumber
\end{eqnarray}
which in turn implies~(\ref{eq71}). Abbreviating $Y_i^{(r)} =
({\mathbf1}_{ \{ i< n_0 \}}
X^{(r)}_{i} + {\mathbf1}_{ \{ i \ge n_0 \}} Z^{(r)} )$ and
denoting by $\Upsilon$ the joint distribution of $ (A_{j_{\ell
_1}}^{(n)}, \ldots, A_{j_{\ell_d}}^{(n)}, I_{j_{\ell_1}}^{(n)},
\ldots,
I_{j_{\ell_d}}^{(n)}, b^{(n)} )$
we have
\begin{eqnarray*}
&& \mathbf{E} \bigl[f\bigl(C_{j_{\ell_1}}^{(n)}, \ldots,
C_{j_{\ell
_{i-1}}}^{(n)}, C_{j_{\ell_i}}^{(n)}, \ldots,
C_{j_{\ell_i}}^{(n)}, D_{j_{\ell_{i+1}}}^{(n)}, \ldots,
D_{j_{\ell
_d}}^{(n)},b^{(n)},\ldots, b^{(n)} \bigr)\bigr]
\\
&&\qquad = \int f (\alpha_1 x_1, \ldots, \alpha_{p-1}
x_{p-1}, \alpha_{p} x_p, \ldots,
\alpha_{p} x_p,
\\
&&\hspace*{94pt} \alpha_{p+1}x_{p+1},\ldots, \alpha_d x_d, b, \ldots, b )
\\
&&\hspace*{41pt}{}\times d \mathbb{P}_{X_{i_1}}(x_1) \cdots d \mathbb{P}_{X_{i_p}}(x_{p})
\,d \mathbb{P}_{Y_{i_{p+1}}}(x_{p+1}) \cdots d \mathbb{P}_{Y_{i_d}}(x_{d})
\\
&&\hspace*{41pt}{}\times
d\Upsilon(\alpha_1, \ldots, \alpha_d, i_1,
\ldots, i_d, b) \label{eq7dd}
\\
&&\qquad = \int\mathbf{E} \bigl[g(X_{i_p}, \ldots, X_{i_p}) \bigr]
\,d \mathbb{P}_{X_{i_1}} \cdots d \mathbb{P}_{X_{i_{p-1}}} \,d
\mathbb{P}_{Y_{i_{p+1}}} \cdots d \mathbb{P}_{Y_{i_d}} \,d\Upsilon,
\end{eqnarray*}
where, for all fixed $\alpha_1, \ldots, \alpha_d, i_1, \ldots, i_d, b,
x_1, \ldots, x_{p-1}, x_{p+1}, \ldots, x_d$, we use the bounded and
multilinear function ${g\dvtx  B^{\ell_p - \ell_{p-1}} \to\mathbb{R}}$,
\begin{eqnarray*}
&& g(y_1, \ldots, y_{\ell_p - \ell_{p-1}})
\\
&&\qquad:= f (\alpha_1 x_1, \ldots, \alpha_{p-1}
x_{p-1}, \alpha_p y_1, \ldots,
\alpha_{p} y_{\ell_p - \ell_{p-1}},
\\
&&\hspace*{111pt} \alpha_{p +1}
x_{p+1}, \ldots, \alpha_d x_d,b, \ldots, b).
\end{eqnarray*}
Since ${\mathcal L}(X_m), {\mathcal L}(Z) \in{\mathcal M}_s(\eta)$ for
all $m\ge n_0$ we can replace $X_{i_p}$ by $Y_{i_p}$.
This shows the equality~(\ref{eq72}), hence~(\ref{eq7}).
Altogether, we obtain ${\mathcal L}(Q_n) \in{\mathcal M}_s(\eta)$ for
all $n\ge n_0$.

Now, we show $T(\mu)\in{\mathcal M}_s(\eta)$. Let $W$ be a random
variable with distribution
$T(\mu)$.
By (C2), in particular, $\|A_r\|_s <\infty$ for
$r=1,\ldots,K$, by (C1) we have $\|b\|_s<\infty$. Thus, as for
$Q_n$, from Minkowski's inequality we obtain \mbox{$\mathbf{E} [\|W\|
_\infty^s ]<\infty
$}, hence $T(\mu)\in{\mathcal M}_s(\eta)$ for $s\le1$. For the case
$s>1$, we consider again arbitrary
$1 \leq k \leq m$ and multilinear and
bounded $f\dvtx  B^k \to\mathbb{R}$. It suffices to show $\mathbf{E}
[f(Q_n,\ldots,Q_n) ]=\mathbf{E} [f(W,\ldots,W)
]$ for some $n\ge n_0$. In fact, we will show that $\lim_{n\to\infty}
\mathbf{E} [f(Q_n,\ldots,Q_n) ]=\mathbf
{E} [f(W,\ldots,W) ]$.
For this, we expand
\[
\mathbf{E} \bigl[f(W,\ldots,W) \bigr]=\mathbf{E} \Biggl[f \Biggl( \sum
_{r=1}^K A_r Z^{(r)} + b,
\ldots, \sum_{r=1}^K A_r
Z^{(r)} + b \Biggr) \Biggr]
\]
into summands corresponding to~(\ref{eq7}) and have to show that
%
%
\begin{equation}
\label{neq8} \lim_{n\to\infty} \mathbf{E} \bigl[f
\bigl(D_{j_1}^{(n)}, \ldots, D_{j_k}^{(n)}
\bigr) \bigr]= \mathbf{E} \bigl[f(E_{j_1}, \ldots, E_{j_k})
\bigr],
\end{equation}
where $j_1,\ldots,j_k\in\{1,\ldots,K\}$. For each $i\in\{1,\ldots,k\}
$, we have in case~(\ref{eq7aa}) that $E_{j_i}= A_{j_i} Z^{(j_i)}$, in
case~(\ref{eq7bb}) that $E_{j_i}=b$.
We obtain, introducing a telescoping sum and using H\"older's inequality,
\begin{eqnarray*}
&& \bigl\llvert\mathbf{E} \bigl[f \bigl(D_{j_1}^{(n)},
\ldots, D_{j_k}^{(n)} \bigr) \bigr]- \mathbf{E}
\bigl[f(E_{j_1}, \ldots, E_{j_k}) \bigr] \bigr\rrvert
\\
&&\qquad= \Biggl\llvert\sum_{q=1}^k
\mathbf{E} \bigl[f \bigl(E_{j_1}, \ldots, E_{j_{q-1}},
D_{j_q}^{(n)}, \ldots, D_{j_k}^{(n)} \bigr)
\\
&&\hspace*{61pt}{} - f \bigl(E_{j_1}, \ldots, E_{j_{q}}, D_{j_{q+1}}^{(n)},
\ldots, D_{j_k}^{(n)} \bigr) \bigr] \Biggr\rrvert
\\
&&\qquad\le\sum_{q=1}^k \bigl\llvert
\mathbf{E} \bigl[f \bigl(E_{j_1}, \ldots, E_{j_{q-1}},
D_{j_q}^{(n)}-E_{j_q}, D_{j_{q+1}}^{(n)},
\ldots, D_{j_k}^{(n)} \bigr) \bigr] \bigr\rrvert
\\
&&\qquad\le\sum_{q=1}^k \|f\| \bigl\|
D_{j_q}^{(n)}- E_{j_q}\bigr\|_k \prod
_{v=1}^{q-1} \|E_{j_v}
\|_k \prod_{v=q+1}^k
\bigl\|D_{j_v}^{(n)}\bigr\|_k.
\end{eqnarray*}
Note that the $\|E_{j_v}\|_k$ and $\|D_{j_v}^{(n)}\|_k$ are all
uniformly bounded by independence, (C1), and $\|X_0\|_s, \ldots,\|
X_{n_0-1}\|_s$, $\|Z\|_s <\infty$. Hence, it suffices to show that $\|
D_{j_v}^{(n)}- E_{j_v}\|_k \to0$ for all $j_v$. In case~(\ref{eq7bb}),
this is $\|b^{(n)}-b\|_k \to0$ by condition~(C1). In case (\ref
{eq7bb}), we have, abbreviating $r=j_i$,
\begin{eqnarray*}
&& \bigl\llVert A_{r}^{(n)} \bigl({\mathbf1}_{ \{ I^{(n)}_{r}< n_0 \}}
X^{(r)}_{I^{(n)}_{r}} + {\mathbf1}_{ \{ I^{(n)}_{r}\ge n_0 \}} Z^{(r)}
\bigr)- A_{r} Z^{(r)} \bigr\rrVert_k
\\
&&\qquad \le \bigl\llVert \bigl(A_{r}^{(n)}- A_{r}
\bigr) Z^{(r)} \bigr\rrVert_k + \bigl\llVert
A_{r}^{(n)} \bigl({\mathbf1}_{ \{ I^{(n)}_{r}< n_0 \}}
\bigl(X^{(r)}_{I^{(n)}_{r}}- Z^{(r)} \bigr) \bigr) \bigr
\rrVert_k.
\end{eqnarray*}
The first summand of the latter display tends to zero by independence,
\mbox{$\|Z\|_s<\infty$} and condition~(C1). The second summand tends to
zero applying H\"older's inequality, condition~(C1), which implies
that $\|A_{r}^{(n)}\|_s$ in uniformly bounded, $\|X_0\|_s,\ldots,\|
X_{n_0-1}\|_s,\|Z\|_s<\infty$ and conditions (C1) and (C3).
Altogether we obtain $T(\mu) \in{\mathcal M}_s(\eta)$.

Let $\mu, \lambda\in{\mathcal M}_s(\eta)$.
Conditioning on the coefficients, using Lemma~\ref{lemtrans} and
(\ref
{ungldo}), it follows that
\[
\zeta_s \bigl(T(\mu), T(\lambda) \bigr) \leq \Biggl(\sum
_{r=1}^K \mathbf{E} \bigl[\|A_r\|
_\mathrm{op}^s \bigr] \Biggr) \zeta_s(\mu,
\lambda).
\]
Thus, by condition~(C2), the restriction of $T$ to ${\mathcal
M}_s(\eta)$ is a contraction with respect to $\zeta_s$.

Assume, $\mu$ was a fixed point of $T$ as well. Then the contraction
property implies
\[
\zeta_s(\mu, \eta)= \zeta_s \bigl(T(\mu), T(\eta) \bigr)
\le L \zeta_s(\mu, \eta),
\]
hence $\zeta_s(\mu, \eta)=0$. Since the $\zeta_s$-distance is a metric
on ${\mathcal M}_s(\eta)$ it follows $\mu= \eta$.
\end{pf}

We now turn to the problem of convergence of the sequence $(X_n)_{n\ge
0}$ to the fixed-point
$\eta$.

Aiming to proof $X_n \rightarrow X$ condition~(C1) is natural in
the context of contraction method. Condition (C2) is necessary if
working with $\zeta_s$ metrics. We will discuss this in detail for the
cases $\mathcal{C}[0,1]$ and $\mathcal{D}[0,1]$ below. The existence
of a solution of the
fixed-point equation in condition~(C3) is required since we miss
knowledge about completeness of the $\zeta_s$ metrics. If $\mu\in
\mathcal{M}_s(B)
$, then $(T^n(\mu))_{n \ge0}$ is a Cauchy sequence with respect to
$\zeta_s$, the proof being similar to the one of the previous lemma.
Then, for $B=\mathcal{C}[0,1]$ or $B=\mathcal{D}[0,1]$, by
Proposition~\ref{propcomp}, all
finite-dimensional\vspace*{1pt} marginals of $T^n(\mu)$ converge to the
corresponding marginals of some measure $\nu$ on $\mathbb
{R}^{[0,1]}$, the
natural candidate for a fixed-point of~(\ref{deftm}). In the
application discussed in Section~\ref{secpm}, the solution of the
fixed-point equation~(\ref{sulzbachlimit}) is constructed via a
sequence $(Z_n)_{n \ge0}$ of random variables that satisfy $\mathcal{L}(Z_n)
= T^n(\mu)$ and converge uniformly almost surely (cf. \cite{BrNeSu11}
for details). The starting point is the Dirac measure $\mu= \delta_f$
with a specific function $f \in\mathcal{C}[0,1]$.

The following proposition uses the ideas developed so far to infer
convergence of $X_n$ to $X$ in the $\zeta_s$ distance. The proof
extends a similar proof for the case $B=\mathbb{R}^d$; see \cite{NeRu04}, Theorem 4.1. We draw further implications from this proof; see
Corollary~\ref{cont2}.

%
\begin{prop} \label{cont}
Let $(X_n)_{n\ge0}$ satisfy recurrence~(\ref{rec}) with conditions
\textup{\mbox{(C1)--(C3)}}. Then for the fixed-point $\eta={\mathcal L}(X)$ of $
T$ in~(\ref{deftm}) we have, as $n\to\infty$,
\[
\zeta_s(X_n, X) \rightarrow0.
\]
\end{prop}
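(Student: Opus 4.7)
The plan is to compare $X_n$ and $X$ through the accompanying sequence $Q_n$ from (\ref{def:acc}), where, for i.i.d.\ copies $Z^{(1)},\ldots,Z^{(K)}$ of $X$ independent of the coefficients, we replace each $X^{(r)}_{I^{(n)}_r}$ by $Z^{(r)}$ whenever $I^{(n)}_r\ge n_0$. Lemma \ref{lem:uniqueness} already gives $\Law(Q_n)\in \M_s(\eta)$, so $\zeta_s$ is finite on the relevant pairs. The triangle inequality yields
\begin{equation*}
\Delta(n):=\zeta_s(X_n,X)\le \zeta_s(X_n,Q_n)+\zeta_s(Q_n,X).
\end{equation*}
First I would bound $\zeta_s(X_n,Q_n)$: conditionally on $(A^{(n)},b^{(n)},I^{(n)})$ the random variables $X_n$ and $Q_n$ agree on the summands with $I^{(n)}_r<n_0$, so the ideal property (Lemma \ref{lem:ideal}) applied to the remaining terms, followed by Lemma \ref{lem:trans} to pull out each factor $\|A^{(n)}_r\|^s_{\mathrm{op}}$, and then taking expectations over $(A^{(n)},I^{(n)})$, gives
\begin{equation*}
\zeta_s(X_n,Q_n)\le \sum_{r=1}^K \E{{\bf 1}_{\{I^{(n)}_r\ge n_0\}}\|A^{(n)}_r\|^s_{\mathrm{op}}\,\Delta(I^{(n)}_r)}.
\end{equation*}

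Next I would show $R(n):=\zeta_s(Q_n,X)\to 0$ using C1. The fixed-point property gives $X\eqdist \sum_r A_r Z^{(r)}+b$, so by (\ref{ungl:do}) it suffices to bound each $\zeta_s(A^{(n)}_r\widetilde X^{(n)}_r,A_r Z^{(r)})$ and $\zeta_s(b^{(n)},b)$ (with $\widetilde X^{(n)}_r$ the summand appearing in $Q_n$). Lemma \ref{lem:ls} lets me switch to $\ell_s$, which is bounded via the obvious coupling by $\|(A^{(n)}_r-A_r)Z^{(r)}\|_s+\|{\bf 1}_{\{I^{(n)}_r<n_0\}}A^{(n)}_r\|_s(\|X^{(r)}_{I^{(n)}_r}\|_s+\|Z^{(r)}\|_s)$ and $\|b^{(n)}-b\|_s$; all of these vanish by (\ref{eq:rate1}) since $\|Z^{(r)}\|_s=\|X\|_s<\infty$ and $\max_{0\le k< n_0}\|X_k\|_s<\infty$.

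Combining these yields the recursive inequality
\begin{equation*}
\Delta(n)\le \sum_{r=1}^K \E{{\bf 1}_{\{I^{(n)}_r\ge n_0\}}\|A^{(n)}_r\|^s_{\mathrm{op}}\,\Delta(I^{(n)}_r)}+R(n),\qquad R(n)\to 0.
\end{equation*}
To close the argument I would first verify $M:=\sup_n \Delta(n)<\infty$ by combining Lemma \ref{lem:ls} with an inductive moment bound $\sup_n\|X_n\|_s<\infty$ obtained from the recurrence via Minkowski's inequality (using $L<1$ eventually and $\|b^{(n)}\|_s$ bounded). Then, fixing $\ell\ge n_0$ and splitting the event $\{I^{(n)}_r\ge n_0\}$ into $\{n_0\le I^{(n)}_r\le \ell\}$, $\{\ell<I^{(n)}_r<n\}$, and $\{I^{(n)}_r=n\}$, I would use condition (\ref{ini_cond}) to force the first and third contributions to vanish in the limit (bounding $\Delta(I^{(n)}_r)\le M$ there), and in the middle range bound $\Delta(I^{(n)}_r)\le \sup_{k>\ell}\Delta(k)$. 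The weighted measure of the middle event tends to $\E{\|A_r\|^s_{\mathrm{op}}}$ because $A^{(n)}_r\to A_r$ in $\|\cdot\|_s$ and (\ref{ini_cond}) peels off the endpoints. Taking $\limsup_n$ gives $\Delta^*\le L\sup_{k>\ell}\Delta(k)$; letting $\ell\to\infty$ yields $\Delta^*\le L\Delta^*$ with $L<1$, forcing $\Delta^*=0$.

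The main obstacle I anticipate is the endgame in the last paragraph: handling the two boundary phenomena simultaneously, namely the possibility that $I^{(n)}_r$ stays small or equals $n$ (both of which would prevent contraction). Both are controlled precisely by (\ref{ini_cond}), which is exactly why that assumption appears in C1; keeping the estimate uniform in $\ell$ while passing to the limit requires the a priori boundedness $M<\infty$ and the convergence $\|A^{(n)}_r\|_s\to \|A_r\|_s$ from (\ref{eq:rate1}) to match the contraction constant $L$ in C2.
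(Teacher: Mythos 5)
Your proposal is correct and follows essentially the same route as the paper: the accompanying sequence $Q_n$ from (\ref{def:acc}), the triangle inequality split, the $\ell_s$-bound for $\zeta_s(Q_n,X)$, the conditioning/ideal-property bound for $\zeta_s(X_n,Q_n)$, and the final contraction step (splitting $\{I_r^{(n)}\ge n_0\}$ into low, middle, and top ranges, using \eqref{ini_cond} to kill the endpoints, then sending $n\to\infty$ followed by $\ell\to\infty$) all mirror the paper's proof. The only cosmetic deviation is that you obtain a priori boundedness $\sup_n\zeta_s(X_n,X)<\infty$ via a moment bound $\sup_n\|X_n\|_s<\infty$ together with Lemma \ref{lem:ls}, whereas the paper reads boundedness directly off the recursive $\zeta_s$-inequality and the eventual contraction coefficient $<1$; both work.
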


\begin{pf}
We use the accompanying sequence defined in~(\ref{defacc}).
Throughout the proof, let $n \geq n_0$.
Again since the $\zeta_s$-distance is a metric, we have
%
%
\begin{equation}
\label{eqtriangle} \zeta_s(X_n, X) \leq\zeta_s(X_n,
Q_n) + \zeta_s(Q_n, X).
\end{equation}
First, we consider the second term. By (C1) and Minkowski's
inequality, absolute moments of order $s$ of the sequence $(Q_n)_{n
\geq n_0}$ are
bounded, hence using Lemma~\ref{lemls} it suffices to show
\[
\ell_s(Q_n, X) \rightarrow0.
\]
Using the same set of independent random variables $X^{(1)}, \ldots,
X^{(K)}$ for $Q_n$ and in the
recurrence of $X$, we obtain
\begin{eqnarray*}
\ell_s(Q_n, X) &\leq& \Biggl\llVert\sum
_{r=1}^K \bigl(A_r - {\mathbf
1}_{
\{ I^{(n)}_{r} \geq n_0 \}} A_r^{(n)} \bigr) X^{(r)}
\Biggr\rrVert_s + \Biggl\llVert\sum_{r=1}^K
{\mathbf1}_{ \{ I^{(n)}_{r} < n_0 \}} A_r^{(n)} X^{(r)}_{I_r^{(n)}}
\Biggr\rrVert_s
\\
&&{} +\bigl\|b^{(n)} - b\bigr\|_s
\\
& \leq& \sum_{r=1}^K \bigl( \bigl\llVert
A_r - A_r^{(n)} \bigr\rrVert_s +
\bigl\llVert{\mathbf1}_{ \{ I^{(n)}_{r} < n_0 \}} \bigl\|A_r^{(n)}
\bigr\|_{\mathrm
{op}} \bigr\rrVert_s \bigr)\llVert X \rrVert
_s +\bigl\|b^{(n)} - b\bigr\|_s
\\
&&{}+ \Biggl\llVert\sum_{r=1}^K {
\mathbf1}_{ \{ I^{(n)}_{r} < n_0 \}} A_r^{(n)} X^{(r)}_{I_r^{(n)}}
\Biggr\rrVert_s.
\end{eqnarray*}
By (C1) the first two summands tend to zero. Also, the third one
converges to zero using (C1) and
\[
\bigl\llVert{\mathbf1}_{ \{ I^{(n)}_{r} < n_0 \}} \bigl\|A_r^{(n)}\bigr\|
_{\mathrm{op}} X^{(r)}_{I_r^{(n)}} \bigr\rrVert_s \leq
\bigl\llVert{\mathbf1}_{
\{ I^{(n)}_{r} < n_0 \}} \bigl\|A_r^{(n)}
\bigr\|_{\mathrm{op}} \bigr\rrVert_s \Bigl\llVert\sup
_{j < n_0} \|X_j\| \Bigr\rrVert_s.
\]

Furthermore, conditioning on the coefficients and using that $\zeta_s$
is $(s,+)$ ideal and Lemma~\ref{lemtrans}, it is easy to see that
%
%
\begin{eqnarray}
\zeta_s(Q_n, X_n) & \leq& p_n
\zeta_s(X_n,X) + \mathbf{E} \Biggl[\sum
_{r=1}^K {\mathbf1}_{ \{ n_0 \leq
I^{(n)}_{r} \leq n-1 \} }
\bigl\|A_r^{(n)}\bigr\|_{\mathrm{op}}^s \zeta
_s (X_{I_r^{(n)}},X ) \Biggr]\hspace*{-35pt} \label{eqqx1}
\\
&\leq& p_n \zeta_s(X_n,X) + \Biggl( \sum
_{r=1}^K \mathbf{E} \bigl[
\bigl\|A_r^{(n)}\bigr\| _{\mathrm{op}} ^s \bigr]
\Biggr)\sup_{n_0 \leq i
\leq n-1} \zeta_s(X_i, X),
\label{eqqx2}
\end{eqnarray}
where
\[
p_n = \mathbf{E} \Biggl[\sum_{r=1}^K
{\mathbf1}_{ \{ I^{(n)}_{r} =
n \}} \bigl\| A_r^{(n)}
\bigr\|_{\mathrm{op}}^s \Biggr] \rightarrow0, \qquad n \rightarrow\infty.
\]
Combining~(\ref{eqtriangle}) and~(\ref{eqqx2}) implies
\[
\zeta_s(X_n, X) \leq\frac{1}{1-p_n} \Biggl[\sum
_{r=1}^K \mathbf{E} \bigl[\bigl\|
A_r^{(n)}\bigr\|_{\mathrm{op}}^s \bigr] \sup
_{n_0 \leq i \leq
n-1} \zeta_s(X_i, X) + o(1)
\Biggr].
\]
From this, it follows that $\zeta_s(X_n,X)$ is bounded. Let
\[
\bar\eta:= \sup_{n \geq n_0} \zeta_s(X_n,X),
\qquad\eta:= \limsup_{n
\rightarrow\infty} \zeta_s(X_n,X)
\]
and
$\varepsilon> 0$ arbitrary. Then there exists $\ell> 0$ with $\zeta
_s(X_n,X) \leq\eta+ \varepsilon$ for all $n \geq\ell$. Using
(\ref{eqtriangle}),~(\ref{eqqx1}) and
splitting $\{n_0 \leq I_r^{(n)} \leq n-1\}$ into $\{n_0 \leq I_r^{(n)}
\leq\ell\}$ and $\{\ell< I_r^{(n)} \leq n-1\}$, we obtain
\begin{eqnarray*}
\zeta_s(X_n,X) &\leq&\frac{\bar\eta}{1- p_n} \mathbf{E}
\Biggl[\sum_{r=1}^K {\mathbf
1}_{ \{ n_0 \leq I^{(n)}_{r} \leq\ell
\}} \bigl\|A_r^{(n)}\bigr\| _{\mathrm{op}}^s
\Biggr]
\\
&&{}  + \frac{\eta+ \varepsilon}{1-p_n} \mathbf{E} \Biggl[\sum_{r=1}^K
\bigl\| A_r^{(n)}\bigr\| _{\mathrm{op}}^s \Biggr] +
o(1),
\end{eqnarray*}
which, by (C1), finally implies
\[
\eta\leq\mathbf{E} \Biggl[\sum_{r=1}^K
\|A_r\|_{\mathrm{op}}^s \Biggr] (\eta+ \varepsilon).
\]
Since $\varepsilon>0$ is arbitrary and by condition~(C2), we
obtain $\eta= 0$.
\end{pf}

%
\begin{rem}
As pointed out in \cite{EiRu07} for a related convergence result, the
statements of Lemma~\ref{lemuniqueness} and Proposition~\ref{cont}
remain true if condition~(C1) is weakened by replacing
\[
\sum_{r=1}^K \bigl\llVert
A_r^{(n)} - A_r \bigr\rrVert_s
\rightarrow0
\]
by
\[
\sum_{r=1}^K \bigl\llVert
\bigl(A_r^{(n)} - A_r \bigr) f \bigr\rrVert
_s \rightarrow0, \qquad\bigl\|A_r^{(n)}
\bigr\|_s \to\|A_r\|_s
\]
for all $f \in\mathcal{C}[0,1]$ and uniform boundedness of $\|
A_r^{(n)}\|_s$
for all $n \geq0$ and all $r = 1, \ldots, K$. This follows from the
given independence structure and the dominated convergence theorem.
\end{rem}

To be able to apply the results of the previous section
to deduce weak convergence from convergence in $\zeta_s$ for the
special cases $\mathcal{C}[0,1]$ and $\mathcal{D}[0,1]$,
rates of convergence for $\zeta_s$ are required. We impose a further
assumption on the convergence rate of the coefficients to establish a
rate of convergence for the process that strengthens condition~(C2). We use the Bachmann--Landau big-$O$ notation for
sequences of numbers.
\begin{longlist}[(C4)]
\item[(C4)] The sequence $(\gamma(n))_{n\ge n_0}$ from condition~(C1) satisfies $\gamma(n)=\break O(R(n))$ as $n\to\infty$
for some positive sequence $R(n) \downarrow0$ such that
\[
L^{*} = \limsup_{n \rightarrow\infty} \mathbf{E} \Biggl[\sum
_{r=1}^K \bigl\|A_r^{(n)}
\bigr\|_{\mathrm{op}}^s \frac
{R(I_r^{(n)})}{R(n)} \Biggr] < 1.
\]
\end{longlist}

%
%
\begin{cor} \label{cont2}
Let $(X_n)_{n\ge0}$ satisfy recurrence~(\ref{rec}) with conditions
\textup{(C1)}, \textup{(C3)} and \textup{(C4)}. Then for the fixed-point $\eta
={\mathcal L}(X)$ of $ T$ in~(\ref{deftm}) we have, as $n\to\infty$,
\[
\zeta_s(X_n, X) =O \bigl(R(n) \bigr).
\]
\end{cor}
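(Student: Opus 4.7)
The plan is to parallel the proof of Proposition~\ref{cont} while tracking the rate $R(n)$. Starting from
\[
\zeta_s(X_n, X) \le \zeta_s(X_n, Q_n) + \zeta_s(Q_n, X)
\]
with the accompanying sequence $(Q_n)$ from (\ref{def:acc}), I would first upgrade the estimate on the second term from $o(1)$ to $O(R(n))$. Each of the summands in the $\ell_s$-bound on $\ell_s(Q_n, X)$ in the proof of Proposition~\ref{cont} is either $\|A_r^{(n)} - A_r\|_s$, $\|b^{(n)} - b\|_s$, or a term of the form $\|\mathbf{1}_{\{I_r^{(n)} \le n_0\}} A_r^{(n)}\|_s$ multiplied by a uniformly bounded moment factor, and hence is bounded by a constant times $\gamma(n) = O(R(n))$ by (\ref{eq:rate1}) and {\bf C4}. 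Lemma~\ref{lem:ls} together with the uniform boundedness of $\|Q_n\|_s$ (from Minkowski, {\bf C1} and {\bf C3}) then yields $\zeta_s(Q_n, X) = O(R(n))$.

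The second step is the conditioning argument of Proposition~\ref{cont}: Lemmas~\ref{lem:ideal} and~\ref{lem:trans} give
\[
\zeta_s(X_n, Q_n) \le \mathbf{E}\Bigl[\sum_{r=1}^K \mathbf{1}_{\{I_r^{(n)} \ge n_0\}} \|A_r^{(n)}\|_{\mathrm{op}}^s\, \zeta_s(X_{I_r^{(n)}}, X)\Bigr],
\]
and separating the self-referential contribution $I_r^{(n)} = n$ to the left side via $p_n := \mathbf{E}[\sum_{r} \mathbf{1}_{\{I_r^{(n)} = n\}} \|A_r^{(n)}\|_{\mathrm{op}}^s] \to 0$ (by (\ref{ini_cond})) produces
\[
(1 - p_n)\zeta_s(X_n, X) \le O(R(n)) + \mathbf{E}\Bigl[\sum_{r=1}^K \mathbf{1}_{\{n_0 \le I_r^{(n)} < n\}} \|A_r^{(n)}\|_{\mathrm{op}}^s\, \zeta_s(X_{I_r^{(n)}}, X)\Bigr].
\]
Setting $h(k) := \zeta_s(X_k, X)/R(k)$ for $k \ge n_0$ and $\bar h(n) := \max_{n_0 \le k \le n} h(k)$, bounding $\zeta_s(X_{I_r^{(n)}}, X) \le R(I_r^{(n)})\,\bar h(n-1)$ on $\{n_0 \le I_r^{(n)} < n\}$, and dividing by $R(n)$ give, for $n$ large,
\[
(1 - p_n)\, h(n) \le M_0 + \bar h(n-1)\, \mathbf{E}\Bigl[\sum_{r=1}^K \|A_r^{(n)}\|_{\mathrm{op}}^s\, \frac{R(I_r^{(n)})}{R(n)}\Bigr].
\]

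Finally, by {\bf C4} the expectation has limsup $L^* < 1$, so I would pick $q \in (L^*, 1)$ and $N$ such that $(1-p_n)^{-1}$ times the expectation is at most $q$ for every $n \ge N$. The one-step recursion $h(n) \le M_0' + q\, \bar h(n-1)$ for $n \ge N$ then propagates: since $\bar h(N-1) < \infty$ (each $\zeta_s(X_k, X)$, $k < N$, being finite by {\bf C3} together with Lemma~\ref{lem:ls} and the moment bounds), a short induction shows $\bar h(n) \le \max\{\bar h(N-1),\, M_0'/(1-q)\}$ for all $n$, giving $\zeta_s(X_n, X) = O(R(n))$. The main obstacle is the self-referential character of the bound: $h(n)$ and $\bar h(n)$ carry no a priori rate, and Proposition~\ref{cont} alone only delivers $h(n) R(n) = o(1)$, which is too weak. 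The bootstrapping is precisely where the strengthened contraction condition {\bf C4}, whose limsup $L^*$ already absorbs the factor $R(I_r^{(n)})/R(n) \ge 1$, converts pointwise finiteness of the $h(k)$ into the desired uniform rate bound.
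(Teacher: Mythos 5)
Your proposal is correct and follows essentially the same route as the paper: bound $\zeta_s(Q_n,X)$ by $O(R(n))$ using {\bf C4} on the terms in the $\ell_s$ estimate, then bootstrap the resulting contraction-type inequality from the accompanying sequence. The only cosmetic difference is that you normalize by $R(n)$ and track the running maximum $\bar h(n)$, whereas the paper directly proves $\zeta_s(X_n,X)\le KR(n)$ by choosing $K$ large enough for the initial segment and inducting; the two formulations are equivalent.
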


\begin{pf}
We consider the quantities introduced in the proof of Proposition~\ref{cont} again. By condition~(C4), we have $\zeta_s(Q_n, X) \leq C
R(n)$ for some $C > 0$ and all $n$.
Furthermore, we can choose $\gamma> 0$ and $n_1 > 0$ such that
\[
\mathbf{E} \Biggl[\sum_{r=1}^K
\bigl\|A_r^{(n)}\bigr\|_{\mathrm{op}}^s
\frac
{R(I_r^{(n)})}{R(n)} \Biggr] \leq1 - \gamma, \qquad p_n \leq
\frac
{\gamma}{2}
\]
for $n \geq n_1$. Obviously, for any $n_2 \geq n_1$, we can choose $K
\geq2C/\gamma$ such that $d(n):= \zeta_s(X_n,X) \leq K R(n)$ for all
$n < n_2$.
Using~(\ref{eqqx1}), this implies
\begin{eqnarray*}
d(n_2) &\leq&p_{n_2} d(n_2)+ \mathbf{E}
\Biggl[\sum_{r=1}^K {\mathbf
1}_{ \{ I^{(n_2)}_{r} \leq n_2-1 \}} \bigl\|A_r^{(n_2)}\bigr\| _{\mathrm{op}}^s
d \bigl(I_r^{(n_2)} \bigr) \Biggr] + C R(n_2)
\end{eqnarray*}
hence
\begin{eqnarray*}
d(n_2) &\leq& \frac{1}{1-p_{n_2}} \Biggl( \mathbf{E} \Biggl[\sum
_{r=1}^K \bigl\| A_r^{(n_2)}
\bigr\|_{\mathrm{op}}^s K R \bigl(I_r^{(n_2)}
\bigr) \Biggr] + C R(n_2) \Biggr)
\\
& = & \frac{1}{1-p_{n_2}} \Biggl(K R(n_2) \mathbf{E} \Biggl[\sum
_{r=1}^K \bigl\| A_r^{(n_2)}
\bigr\|_{\mathrm{op}}^s \frac
{R(I_r^{(n_2)})}{R(n_2)} \Biggr] + C
R(n_2) \Biggr)
\\
& \leq& \frac{1}{1-p_{n_2}} \bigl((1- \gamma)K + C \bigr) R(n_2) \leq
K R(n_2).
\end{eqnarray*}
Inductively, $d(n) \leq K R(n)$ for all $n$.
\end{pf}

We now consider the special cases $\mathcal{C}[0,1]$ and $\mathcal{D}[0,1]$.
Related to Corollary~\ref{Cor4}, we consider the following additional
assumption, where the notation ${\mathcal C}_r[0,1]$ defined in (\ref
{defcrn}) is used.
\begin{longlist}[(C5)]
\item[(C5)] \textit{Case} $(\mathcal{C}[0,1], \|\cdot\|_\infty)$: we
have $X_n = Y_n +
h_n$ for all $n\ge0$, where $\|h_n -h \|_\infty\to0$ with $h_n,h\in
\mathcal{C}[0,1]$, and there exists a positive sequence $(r_n)_{n\ge
0}$ such that
\[
\mathbf{P} \bigl(Y_n \notin\mathcal{C}_{r_n}[0,1] \bigr)
\to0.
\]

\textit{Case} $(\mathcal{D}[0,1], d_{\mathrm{sk}})$: we have $X_n = Y_n + h_n$ for all
$n\ge0$, where
$\|h_n -h \|_\infty\to0$ with $h_n \in\mathcal{D}[0,1], h\in
\mathcal{C}[0,1]$, and there
exists a positive sequence $(r_n)_{n\ge0}$ such that
\[
\mathbf{P} \bigl(Y_n \notin\mathcal{D}_{r_n}[0,1] \bigr)
\to0.
\]
\end{longlist}

We now state the main theorem of this section. It follows immediately
from Proposition~\ref{propfdd}, Corollary~\ref{Cor4}, Proposition
\ref{cont} and Corollary~\ref{cont2}.

%
\begin{teo} \label{teomain}
Let $(X_n)_{n\ge0}$ be a sequence of random variables in $(\mathcal
{C}[0,1],\break \|
\cdot\|_\infty)$ or $(\mathcal{D}[0,1], d_{\mathrm{sk}})$ satisfying
recurrence~(\ref{rec})
with conditions \textup{(C1)}, \textup{(C2)}, \textup{(C3)} being satisfied. Then, for
${\mathcal L}(X)=\eta$, we have for all $t \in[0,1]$
%
%
\begin{equation}
\label{folg1} X_n(t) \stackrel{d} {\longrightarrow} X(t), \qquad
\mathbf{E} \bigl[\bigl|X_n(t)\bigr|^s \bigr] \to\mathbf{E}
\bigl[\bigl|X(t)\bigr|^s \bigr].
\end{equation}
If $Z$ is distributed on $[0,1]$ and independent of $(X_n)$ and $X$ then
%
%
\begin{equation}
\label{folg2} X_n(Z) \stackrel{d} {\longrightarrow} X(Z), \qquad
\mathbf{E} \bigl[\bigl|X_n(Z)\bigr|^s \bigr] \to\mathbf{E}
\bigl[\bigl|X(Z)\bigr|^s \bigr].
\end{equation}
If moreover conditions \textup{(C4)} and \textup{(C5)} are satisfied, where
$R(n)$ in \textup{(C4)} and $r_n$ in~\textup{(C5)} can be chosen with
%
%
\begin{equation}
\label{condrate} R(n)=o \biggl(\frac{1}{\log^{m}(1/r_n)} \biggr), \qquad n\to\infty,
\end{equation}
then we have convergence in distribution:
\[
X_n \stackrel{d} {\longrightarrow} X.
\]
%
\end{teo}

Finally, we give sufficient criteria to verify condition~(C3) for
the cases $\mathcal{C}[0,1]$ and $\mathcal{D}[0,1]$. First, consider
the general case where $\mathcal{L}
(Y) = \nu$ is a probability distribution on a separable Banach space
$(B, \| \cdot\|)$ with $\mathbf{E} [\|Y\|^s ] < \infty
$. If $B$ is a Hilbert
space, it is easy to see (and already indicated in \cite{Zolo76} for
$m=2$) that for a probability measure ${\mathcal L}(X)=\mu$ on $B$ to
be in ${\mathcal M}_s(\nu)$ the defining properties~(\ref{eqmom1}) and
(\ref{eqmom2}) are equivalent to $\mathbf{E} [\|X\|^s ]
< \infty$ and
\[
\mathbf{E} \bigl[\varphi_1(X)\cdots\varphi_k(X) \bigr] =
\mathbf{E} \bigl[\varphi_1(Y)\cdots\varphi_k(Y) \bigr]
\]
for all $0 < k \leq m$ and continuous linear forms $\varphi_1, \ldots,
\varphi_n$ on $B$. A generalization of this equivalence to Banach
spaces does not hold in general, a counterexample is constructed in
Janson and Kaijser \cite{jankai12}. However, with deeper arguments from
functional analysis, Janson and Kaijser \cite{jankai12} proved that
this equivalence does hold for separable Banach spaces having the
approximation property, such as $\mathcal{C}[0,1]$. The case $\mathcal
{D}[0,1]$ is also treated
in \cite{jankai12}. Combining~(\ref{eqmom1}),~(\ref{eqmom2}) and
Theorems 1.3~and~16.13 in \cite{jankai12} implies the following lemma.


%
\begin{lem} \label{matchmoments}
Let ${\mathcal L}(Y)={\mathcal L}((Y_t)_{t\in[0,1]})=\nu$ and
${\mathcal L}(X)={\mathcal L}((X_t)_{t\in[0,1]})=\mu$ be probability
measures on $\mathcal{C}[0,1]$. For $0< s \le1$ we have $\mu\in
{\mathcal M}_s(\nu
)$ if
%
%
\begin{equation}
\label{zeil1} \mathbf{E} \bigl[\|X\|_\infty^s \bigr],
\mathbf{E} \bigl[\|Y\| _\infty^s \bigr]<\infty.
\end{equation}
For $1< s \le2$ we obtain $\mu\in{\mathcal M}_s(\nu)$ if we have
condition~(\ref{zeil1}) and
%
%
\begin{equation}
\label{zeil2} \mathbf{E} [X_t ]=\mathbf{E} [Y_t ]\qquad\mbox{for all } 0\le t\le1.
\end{equation}
For $2< s \le3$ we obtain $\mu\in{\mathcal M}_s(\nu)$ if we have
conditions~(\ref{zeil1}),~(\ref{zeil2}) and
%
%
\begin{equation}
\label{zeil3} \operatorname{Cov}(X_t,X_u)=
\operatorname{Cov}(Y_t,Y_u)\qquad\mbox{for all } 0\le t,u\le1.
\end{equation}
The assertions remain true if $\mathcal{C}[0,1]$ is replaced by
$\mathcal{D}[0,1]$.
\end{lem}

%
\begin{rem}
Interpreting $\mathbf{E} [X ]$ as a Bochner integral in
the continuous case,
condition~(\ref{zeil2}) is equivalent to $\mathbf{E} [X
] = \mathbf{E} [Y ]$.
This is due to the fact that $\mathbf{E} [X ]$ is a
continuous function with $\mathbf{E} [X ] (t) = \mathbf
{E} [X(t) ]$ and $\varphi(\mathbf{E} [X ])
= \mathbf{E} [\varphi(X) ]$ for all
continuous linear forms $\varphi$ on $\mathcal{C}[0,1]$. Also the
higher moments can
be interpreted similarly as expectations of corresponding tensor
products; see \cite{DrJaNe08} or, for an elaborate account \cite{jankai12}.
\end{rem}

%
\begin{rem}
Note that condition~(\ref{zeil3}) typically cannot be achieved for a
sequence $(X_n)_{n\ge0}$ that arises as in~(\ref{rec2}) by an affine
scaling from a sequence $(Y_n)_{n\ge0}$ as in~(\ref{rec1}). This
fundamental problem for developing a functional contraction method on
the basis of the Zolotarev metrics $\zeta_s$ with $2<s\le3$ was
already mentioned in \cite{DrJaNe08}, Remark 6.2.
We describe a way to circumvent this problem in our application to
Donsker's invariance principle by a perturbation argument; see
Section~\ref{secapp2}.
\end{rem}

\section{Applications}
As applications, we first give as a toy example a short proof of
Donsker's invariance principle in Section~\ref{secapp2}. In
Section~\ref{secpm}, we discuss further examples from the
probabilistic analysis of algorithms on partial match queries which
requires the full generality of our abstract setting. This allows to
settle various long standing open questions about asymptotics of the
complexity of such queries.

\subsection{Donsker's invariance principle} \label{secapp2}
Let $(V_n)_{n\in\mathbb N}$ be a sequence of independent, identically
distributed real valued random variables with $\mathbf{E} [V_1
] = 0$, $\operatorname{Var} (V_1 )
= 1$ (for simplicity) and $\mathbf{E} [|V_1|^{2+\varepsilon}
] < \infty$ for some
$\varepsilon> 0$. We consider the properly scaled and linearized random
walk $S^n=(S^n_t)_{t\in[0,1]}$, $n\ge1$, defined by
\[
S_t^n = \frac{1}{\sqrt{n}} \Biggl( \sum
_{k = 1}^{\lfloor nt\rfloor
}V_k + \bigl(nt - \lfloor nt
\rfloor \bigr) V_{\lfloor nt\rfloor+1} \Biggr), \qquad t\in[0,1].
\]
With $W=(W_t)_{t\in[0,1]}$, a standard Brownian motion Donsker's
function limit law states the following.

%
\begin{teo}[(Donsker \cite{do51})]\label{th1}
We have $S^n \stackrel{d}{\longrightarrow} W$
as $n \to\infty$ in $(\mathcal{C}[0,1],\break \| \cdot\|_\infty)$.
\end{teo}

\subsubsection{A contraction proof} \label{donskerproof}
In this section, we apply the general methodology of Sections~\ref{zolosec} and~\ref{nnnCM} to give a short proof of Theorem~\ref{th1}.
For a recursive decomposition of $S^n$ and $W$, we
define operators for $\beta> 1$,
\begin{eqnarray*}
\varphi_{\beta}\dvtx \mathcal{C}[0,1]&\to&\mathcal{C}[0,1], \qquad
\varphi_{\beta} (f) (t) = {\mathbf1}_{\{ t \leq1 / \beta\}} f(\beta t) + {
\mathbf1}_{\{ t > 1 / \beta\}
}f(1),
\\
\psi_{\beta}\dvtx \mathcal{C}[0,1]&\to&\mathcal{C}[0,1], \qquad\psi
_{\beta}(f) (t) ={\mathbf1}_{\{ t
\le1 / \beta\}}f(0)+{\mathbf1}_{\{ t > 1 / \beta\}}
f \biggl(\frac
{\beta t
-1}{\beta-1} \biggr).
\end{eqnarray*}
Note that both $\varphi_{\beta} $ and $\psi_{\beta}$ are linear,
continuous and $\|\varphi_{\beta} (f)\|_\infty= \|\psi_{\beta}(f)\|
_\infty= \|f\|_\infty$ for all $f \in\mathcal{C}[0,1]$, hence we
have $\|\varphi
_\beta\|_\mathrm{op}=\|\psi_\beta\|_\mathrm{op}=1$.
By construction, we have
%
%
\begin{equation}
\label{eq3b} \qquad S^n \stackrel{d} {=} \sqrt{\frac{\lceil n /2\rceil}{n}}
\varphi_{{n}/{\lceil n/2\rceil}} \bigl(S^{\lceil n/2 \rceil} \bigr) + \sqrt{ \frac
{\lfloor n /2\rfloor}{n}}
\psi_{{n}/{\lceil n/2\rceil}} \bigl(\widehat{S} {}^{\lfloor n/2 \rfloor} \bigr), \qquad n\ge2,
\end{equation}
where $(S^1,\ldots,S^n)$ and $(\widehat{S}{}^1,\ldots,\widehat{S}{}^n)$ are
independent and $S^j$ and $\widehat{S}{}^j$ are identically
distributed for all $j\ge1$. Therefore, $(S^n)_{n\ge1}$ satisfies
recurrence~(\ref{rec}) choosing
\begin{eqnarray*}
K&=&2, \qquad I_1^{(n)} = \lceil n/2 \rceil, \qquad
I_2^{(n)} = \lfloor n/2 \rfloor, \qquad n_0=2,
\\
A_1^{(n)} &=& \sqrt{\frac{\lceil n /2\rceil}{n}}
\varphi_{{n}/{\lceil n/2\rceil}}, \qquad A_2^{(n)} = \sqrt{
\frac{\lfloor n
/2\rfloor}{n}} \psi_{{n}/{\lceil n/2\rceil}}, \qquad b^{(n)} = 0.
\end{eqnarray*}
In the following, let $\widehat{W}= (\widehat{W}_t)_{t\in[0,1]}$ be a
standard Brownian motion, independent of $W$. Properties of Brownian
motion imply
%
%
\begin{equation}
\label{eq4} W \stackrel{d} {=} \sqrt{\frac{1}{\beta}}
\varphi_{\beta}(W)+ \sqrt{\frac
{\beta-1}{\beta}} \psi_{\beta}(
\widehat{W})
\end{equation}
for any $\beta> 1$. Hence, the Wiener measure ${\mathcal L}(W)$ is a
fixed point of the operator $T$ in~(\ref{deftm}) with
%
%
\begin{equation}
\label{eqfixdo} K=2, \qquad A_1 = \sqrt{\frac{1}{\beta}}
\varphi_{\beta}, \qquad A_2 = \sqrt{\frac{\beta-1}{\beta}}
\psi_{\beta}, \qquad b = 0.
\end{equation}
For $\beta= 2$, the coefficients in~(\ref{eq3b}) converge to the ones
in~(\ref{eq4}), that is, as $n\to\infty$,
\[
\sqrt{\frac{\lceil n /2\rceil}{n}}\to\frac{1}{\sqrt{2}}, \qquad\sqrt{ \frac{\lfloor n /2\rfloor}{n}}
\to\frac{1}{\sqrt{2}},
\]
but the coefficients $A_1^{(n)}, A_2^{(n)}$ only converge to $A_1, A_2$
in the operator norm for $n$ even. Nevertheless, from the point of view
of the contraction method, this suggests weak convergence of $S^n$ to $W$.

Note that the operator $T$ associated with the fixed-point equation
(\ref{eq4}), that is, with the coefficients in~(\ref{eqfixdo}),
satisfies condition~(C2) only with $s>2$. In view of condition~(C3) and Lemma~\ref{matchmoments}, we need to match the mean and
covariance structure.
We have $\mathbf{E} [S_t^n ] = 0$ for all $0\le t\le1$
and a direct computation yields
%
%
\begin{eqnarray}
\label{eq2} \operatorname{Cov} \bigl(S_s^n,S_t^n
\bigr) = \cases{ s, &\quad for $\lfloor ns\rfloor< \lfloor nt\rfloor$,
\vspace*{3pt}\cr
\displaystyle\frac{1}{n} \bigl(\lfloor ns\rfloor+ \bigl(ns - \lfloor ns\rfloor
\bigr) \bigl(nt -\lfloor nt\rfloor \bigr)\bigr), &\quad for $\lfloor ns\rfloor= \lfloor nt
\rfloor$.}\hspace*{-30pt}
\end{eqnarray}
Hence, we do not have finite $\zeta_{2 + \varepsilon}$-distance
between $S^n$ and $W$ since they do not share their covariance
functions. To surmount this problem, we consider a linearized version
of the Brownian motion $W$.
For fixed $n \in\mathbb N$, we divide the unit interval into pieces of length
$1/n$ and interpolate $W$ linearly between the points $0,1/n,2/n,\ldots,(n-1)/n,1$. The interpolated process $W^n=(W_t^n)_{t\in[0,1]}$ is
given by
\[
W^n_t:= W_{{\lfloor nt\rfloor}/{n}} + \bigl( nt - \lfloor nt
\rfloor \bigr) (W_{{(\lfloor nt\rfloor+1)}/{n}}-W_{{\lfloor nt\rfloor}/{n}} ), \qquad t\in[0,1].
\]
We have $\mathbf{E} [W^n_t ] = 0$ and $W^n$ and $S^n$
have the same covariance
function~(\ref{eq2}) for all $n \in\mathbb N$. Furthermore, $W^n$ has the
same distributional recursive decomposition~(\ref{eq3b}) as $S^n$.

Note that the linearized Brownian motion does not differ much from the
original one:

%
\begin{lem} \label{LemmaB}
We have $\|W^n -W\|_\infty\to0$ as $n\to\infty$ almost surely.
\end{lem}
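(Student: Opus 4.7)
The plan is to reduce the statement to the almost sure uniform continuity of Brownian motion on the compact interval $[0,1]$. Since $W$ has almost sure continuous sample paths and $[0,1]$ is compact, for almost every $\omega$ the path $t \mapsto W_t(\omega)$ is uniformly continuous; equivalently, the modulus of continuity
\begin{equation*}
\omega_W(\delta) := \sup_{\substack{s,t \in [0,1] \\ |s-t| \le \delta}} |W_s - W_t|
\end{equation*}
satisfies $\omega_W(\delta) \downarrow 0$ almost surely as $\delta \downarrow 0$.

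The key step is then a pointwise bound on $|W_t - W^n_t|$ on each subinterval of the dyadic-type grid $\{0, 1/n, 2/n, \ldots, 1\}$. For any $t \in [k/n, (k+1)/n]$, the value $W^n_t$ is a convex combination of $W_{k/n}$ and $W_{(k+1)/n}$, so by the triangle inequality
\begin{equation*}
|W_t - W^n_t| \le |W_t - W_{k/n}| + |W_{(k+1)/n} - W_{k/n}| \le 2\, \omega_W(1/n).
\end{equation*}
Taking the supremum over $t \in [0,1]$ gives $\|W^n - W\|_\infty \le 2\, \omega_W(1/n)$, and the right-hand side tends to $0$ almost surely by uniform continuity.

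There is no real obstacle here; the statement is essentially a restatement of uniform continuity of Brownian motion paths on $[0,1]$, and the only thing to be slightly careful about is the endpoint $t=1$ (which falls on the grid, so $W^n_1 = W_1$ and contributes nothing). If one wishes a quantitative rate, one could invoke L\'evy's modulus of continuity to get $\omega_W(1/n) = O(\sqrt{\log n / n})$ almost surely, but for the present qualitative statement the continuity of sample paths is sufficient.
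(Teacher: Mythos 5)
Your proof is correct but takes a genuinely different route from the paper. You argue pathwise: after fixing an $\omega$ for which the Brownian path is (uniformly) continuous on $[0,1]$, the bound $\|W^n - W\|_\infty \le 2\,\omega_W(1/n)$ follows from a deterministic triangle-inequality estimate on each cell $[k/n,(k+1)/n]$, since $W^n_t$ is a convex combination of the endpoint values. This reduces the lemma to the almost sure continuity of sample paths, a soft qualitative fact, and no probabilistic estimate is needed at all. The paper instead argues in distribution: it applies a union bound over the $n$ subintervals, bounds $\p{\sup_{t\in[0,1/n]}|W_t| > \varepsilon/2}$ via the reflection principle, observes that this decays exponentially in $n$, and concludes by Borel--Cantelli. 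The paper's route is more quantitative and self-contained within basic Brownian estimates (it yields explicit summable tail bounds without invoking the modulus of continuity), whereas your route is shorter and more conceptual once a.s.\ uniform continuity is taken as known. Both are valid; your optional remark about L\'evy's modulus would recover a rate comparable in spirit to what the paper's exponential bound provides.
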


\begin{pf}
This directly follows from the uniform continuity of $W$. For
$\varepsilon> 0$, there exists a random $\delta> 0$ such that
$|W(t)-W(s)| < \varepsilon$ for any
$s,t \in[0,1]$ with $|t-s| < \delta$. The triangle inequality implies
$\| W^n - W \|_\infty< 2 \varepsilon$ for any $n > 1/\delta$.
\end{pf}

In view of Corollary~\ref{Cor1}, it suffices to prove that $S^n$ and
$W^n$ are close with respect to $\zeta_{2 + \varepsilon}$. The proof of
this runs along the same lines as the one for Proposition~\ref{cont},
respectively, Corollary~\ref{cont2}; in fact, it is much shorter due to
the simple form of the recurrence:

%
\begin{prop} \label{prop2}
For any $\delta< \varepsilon/2$ we have $\zeta_{2+\varepsilon}(S^n,
W^n) = O(n^{-\delta})$ as \mbox{$n \rightarrow\infty$}.
\end{prop}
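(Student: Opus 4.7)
The plan is to exploit the fact that the linearized Brownian motion $W^n$ satisfies exactly the same distributional recursion (\ref{eq3b}) as $S^n$, with the same (deterministic) linear operators $A_1^{(n)}, A_2^{(n)}$. This will reduce the problem to a Roesler-type recursion for $d(n) := \zeta_{2+\varepsilon}(S^n, W^n)$ that contracts at rate $2^{-\varepsilon/2}$.

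First I would verify finiteness of $d(n)$ for every $n$. By Lemma \ref{match_moments}, since $s = 2 + \varepsilon \in (2,3]$ (it suffices to take $\varepsilon \le 1$, the general case being implied by monotonicity in $s$ after truncating), this amounts to checking (a) $\E{\|S^n\|_\infty^{2+\varepsilon}}, \E{\|W^n\|_\infty^{2+\varepsilon}} < \infty$ and (b) $S^n$ and $W^n$ have matching mean and covariance functions. For (a), $\|W^n\|_\infty \le \|W\|_\infty$ by construction, and uniform boundedness of $\E{\|S^n\|_\infty^{2+\varepsilon}}$ follows from Doob's maximal inequality combined with Rosenthal's inequality, using the assumption $\E{|V_1|^{2+\varepsilon}} < \infty$. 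Point (b) is explicit: both processes have zero mean and the covariance (\ref{eq2}).

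Next, I would write down the contraction. Since $\|\varphi_\beta\|_{\mathrm{op}} = \|\psi_\beta\|_{\mathrm{op}} = 1$, the operator norms are $\|A_1^{(n)}\|_{\mathrm{op}} = \sqrt{\lceil n/2\rceil/n}$ and $\|A_2^{(n)}\|_{\mathrm{op}} = \sqrt{\lfloor n/2\rfloor/n}$. Coupling $S^n$ and $W^n$ through the common recursive decomposition and applying the $(s,+)$-ideal inequality (\ref{ungl:do}) together with Lemma \ref{lem:trans} yields
\begin{equation*}
d(n) \le \left(\tfrac{\lceil n/2\rceil}{n}\right)^{(2+\varepsilon)/2} d(\lceil n/2\rceil) + \left(\tfrac{\lfloor n/2\rfloor}{n}\right)^{(2+\varepsilon)/2} d(\lfloor n/2\rfloor), \qquad n \ge 2.
\end{equation*}

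Finally, I would prove the rate $d(n) = O(n^{-\delta})$ by strong induction. Fix $\delta < \varepsilon/2$ and suppose inductively that $d(m) \le K m^{-\delta}$ for all $m < n$ with $K$ to be chosen. Substituting into the recursion gives
\begin{equation*}
d(n) \le K \cdot 2^{\delta - (2+\varepsilon)/2}\, n^{-\delta} \left[(1 + 1/n)^{(2+\varepsilon)/2 - \delta} + (1 - 1/n)^{(2+\varepsilon)/2 - \delta}\right],
\end{equation*}
and the bracketed sum tends to $2$ as $n\to\infty$, so the prefactor approaches $2^{\delta - \varepsilon/2} < 1$. Consequently, for $n$ beyond some $n_0$ the induction step multiplies by a factor strictly less than one, and a large enough $K$ absorbs both the finite base cases $n < n_0$ and the $o(1)$ correction. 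The main technical point is to track the asymmetry $\lceil n/2\rceil$ vs.\ $\lfloor n/2\rfloor$ cleanly for odd $n$; aside from this bookkeeping the argument is entirely parallel to (and simpler than) the proof of Corollary \ref{cont2}.
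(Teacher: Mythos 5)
Your proof is correct and follows essentially the same route as the paper's: couple $S^n$ and $W^n$ through the shared recursion (\ref{eq3b}), use the $(s,+)$-ideality (\ref{ungl:do}) together with Lemma \ref{lem:trans} and $\|\varphi_\beta\|_{\mathrm{op}}=\|\psi_\beta\|_{\mathrm{op}}=1$ to obtain
$d(n)\le (\lceil n/2\rceil/n)^{1+\varepsilon/2}d(\lceil n/2\rceil)+(\lfloor n/2\rfloor/n)^{1+\varepsilon/2}d(\lfloor n/2\rfloor)$,
and close by induction using $a_n+b_n<2^{-\varepsilon/2}$. The only material difference is that you spell out the finiteness of $\zeta_{2+\varepsilon}(S^n,W^n)$ via Lemma \ref{match_moments} and moment bounds, which the paper leaves implicit after establishing the matching mean/covariance in (\ref{eq2}); this is a minor but worthwhile addition.
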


\begin{pf} We have
\begin{eqnarray*}
\zeta_{2+\varepsilon} \bigl(S^n, W^n \bigr) &= &
\zeta_{2+\varepsilon} \biggl( \sqrt{\frac{\lceil n /2\rceil}{n}} \varphi _{{n}/{\lceil n/2\rceil}}
\bigl(S^{\lceil n/2 \rceil
} \bigr) + \sqrt{\frac{\lfloor n /2\rfloor}{n}} \psi_{{n}/{\lceil
n/2\rceil
}}
\bigl(\overline{S} {}^{\lfloor n/2 \rfloor} \bigr),
\\
&& \hphantom{\zeta_{2+\varepsilon} \biggl(} \sqrt{\frac
{\lceil n
/2\rceil}{n}}
\varphi_{{n}/{\lceil n/2\rceil}} \bigl(W^{\lceil n/2
\rceil} \bigr) + \sqrt{\frac{\lfloor n /2\rfloor}{n}}
\psi_{{n}/{\lceil n/2\rceil}} \bigl(\overline{W} {}^{\lfloor n/2 \rfloor
} \bigr) \biggr)
\\
& \leq& \biggl( \frac{\lceil n /2\rceil}{n} \biggr)^{1+ \varepsilon/2} \zeta_{2+\varepsilon}
\bigl(S^{\lceil n/2 \rceil}, W^{\lceil n/2
\rceil} \bigr)
\\
&&{}+ \biggl( \frac{\lfloor n /2\rfloor}{n} \biggr)^{1+ \varepsilon/2} \zeta_{2+\varepsilon}
\bigl(S^{\lfloor n/2 \rfloor}, W^{\lfloor n/2
\rfloor} \bigr).
\end{eqnarray*}
We abbreviate
\[
d_n:= \zeta_{2+\varepsilon} \bigl(S^n, W^n
\bigr), \qquad a_n:= \biggl( \frac
{\lceil n /2\rceil}{n} \biggr)^{1+ \varepsilon/2},
\qquad b_n:= \biggl( \frac{\lfloor n /2 \rfloor}{n} \biggr)^{1+
\varepsilon/2}
\]
and note that we have $a_n + b_n \leq2^{-\varepsilon/2} +C'/ n$ for
some constant $C' > 0$ and all $n \in\mathbb N$. For arbitrary $\delta<
\varepsilon/2$, we prove the assertion by induction: fix $\delta<
\delta' < \varepsilon/2$ and choose $m_0 \in\mathbb N$ such that
$ \lfloor n/2 \rfloor^{-\delta} \leq(n / 2) ^{-\delta}
2^{\varepsilon
/2 - \delta'}$ and $1+ 2^{\varepsilon/2} C'/n \leq2^{\delta'-\delta}$
for all $n \geq m_0$. Furthermore, let $C > 0$ be large enough such
that $d_n \leq C n^{-\delta}$ for all $1\le n \leq m_0$. Then, for $n >
m_0$, assuming the claim to be verified for all smaller indices,
\begin{eqnarray*}
d_n &\leq&a_n d_{\lceil n/2 \rceil} + b_n
d_{\lfloor n/2 \rfloor}
\\
& \leq& C \bigl( a_n (n/2)^{-\delta} + b_n
(n/2)^{-\delta
}2^{\varepsilon/2 - \delta'} \bigr)
\\
& \leq& C n^{-\delta} 2^{\delta} 2^{\varepsilon/2 - \delta'} (a_n +
b_n)
\\
& \leq& C n^{-\delta}.
\end{eqnarray*}
The assertion follows.
\end{pf}

Now Donsker's theorem (Theorem~\ref{th1}) follows from Proposition
\ref{prop2},\break Lemma~\ref{LemmaB} and Corollary~\ref{Cor1}.

Note that our approach requires the assumption $\mathbf{E}
[|V_1|^{2 + \varepsilon} ] < \infty$ for some $\varepsilon
>0$, which in Donsker's
theorem can be weakened to $\mathbf{E} [V_1^2 ] < \infty
$.

By Theorem~\ref{teosup}, we directly obtain convergence of moments of
the supremum.

%
\begin{cor} \label{donmom}
Suppose $\mathbf{E} [|V_1|^{2+ \alpha} ] < \infty$ with
$0 < \alpha\leq1$.
Then $\|S^{n}\|_\infty^{2+\alpha}$ is uniformly integrable. Thus,
$\mathbf{E} [\| S^{n}\|_\infty^{\kappa} ]$ converges to
$\mathbf{E} [\|W\|_{\infty}^\kappa ]$ for any $0 <
\kappa\leq2 + \alpha$.
\end{cor}

%
\begin{rem}
Based on the recursion~(\ref{eq3b}), it is easy to show that\break $\mathbf
{E} [\| S^n\| _\infty^k ]$ is bounded uniformly in $n$
for integer valued $k \geq3$
if the increment $V_1$ has finite
absolute moment of order $k$. In this case, we have $\mathbf{E}
[\| S^n \| _\infty^\kappa ] \rightarrow\mathbf{E} [\|
W \|_\infty^{\kappa} ] $
for any real $0 < \kappa< k$.
\end{rem}

\subsubsection{Characterizing the Wiener measure by a fixed-point property} \label{subsecchar}
We reconsider the map $T$ corresponding to the fixed-point equation
(\ref{eq4}) for the case $\beta=2$:
%
%
\begin{equation}\label{tspec}
 T \dvtx \mathcal{M} \bigl(\mathcal{C}[0,1] \bigr) \to\mathcal{M} \bigl(
\mathcal{C}[0,1] \bigr), \qquad
T(\mu) = \mathcal{L} \biggl( \frac{1}{\sqrt{2}} \varphi_{2}(Z)+
\frac
{1}{\sqrt{2}} \psi_{2}({\overline{Z}}) \biggr),\hspace*{-35pt}
\end{equation}
where $Z$, $\overline{Z}$ are independent with distribution ${\mathcal
L}(Z)={\mathcal L}(\overline{Z})=\mu$. Our discussion above implies
that the Wiener measure $\mathcal{L}(W)$ is the unique fixed point of
$T$ restricted to $\mathcal{M}_{2 + \varepsilon}(\mathcal{L}(W))$
for any $\varepsilon
>0$. Note that $\mathcal{M}_{2 + \varepsilon}(\mathcal{L}(W))$ is
the space of the
distributions of all continuous stochastic processes $V=(V_t)_{t\in
[0,1]}$ with $\mathbf{E} [\|V\|_\infty^{2+\varepsilon}
]<\infty$, $\mathbf{E} [V_t ]=0$ and
$\operatorname{Cov}(V_t,V_u)=t \wedge u$ for all $0\le t,u\le1$. Note that
one easily verifies that $T(\mathcal{M}_{2 + \varepsilon}(\mathcal
{L}(W))) \subset\mathcal{M}
_{2 + \varepsilon}(\mathcal{L}(W))$ and the last part of the proof of
Lemma~\ref{lemuniqueness} implies that $T$ restricted to $\mathcal
{M}_{2 +
\varepsilon}(\mathcal{L}(W))$ is Lipschitz-continuous with Lipschitz constant
at most $L=2^{-\varepsilon/2}<1$, hence $\mathcal{L}(W)$ is the
unique fixed
point of $T$ in $\mathcal{M}_{2 + \varepsilon}(\mathcal{L}(W))$.

We now show that a more general statement is true, the Wiener measure
is also, up to multiplicative scaling, the unique fixed point of $T$ in
the larger space of probability measures ${\mathcal L}(V)\in\mathcal
{M}(\mathcal{C}[0,1])$
with $V_0=0$. For a related statement, see also Aldous \cite{al94}, page 528.
The subsequent proof is based on the fact that the centered normal
distributions are the only solutions of the fixed-point equation
%
%
\begin{equation}
\label{fixnormal} X \stackrel{d} {=} \frac{X + \overline{X}}{\sqrt{2}},
\end{equation}
where $X, \overline{X}$ are independent, identically distributed
real-valued random variables; see
Theorem 7.2.1 in \cite{Lukacs75}.

%
\begin{teo} \label{bmchar}
Let $X = (X_t)_{t \in[0,1]}$ be a continuous process with $X_0 =0$.
Then ${\mathcal L}(X)$ is a fixed-point of~(\ref{tspec}) if and only
if either $X = \mathbf{0}$ a.s. or there exists a constant $\sigma>
0$, such that $(\sigma^{-1} X_t)_{t \in[0,1]}$ is a standard Brownian motion.
\end{teo}

\begin{pf} Let ${\mathcal L}(X)$ be a fixed point of~(\ref{tspec}) and
$\overline{X}= (\overline{X}_t)_{t \in[0,1]}$
be independent of $X$ with the same distribution. The fixed point
property implies
\[
X_1 \stackrel{d} {=} \frac{X_1 + \overline{X}_1}{\sqrt{2}},
\]
hence $\mathcal{L}(X_1) = \mathcal{N} (0, \sigma^2)$ for some
$\sigma^2 \geq
0$, where $\mathcal{N}(0, \sigma^2)$ denotes the centered normal
distribution with variance $\sigma^2$. This implies
\[
X_{1/2} \stackrel{d} {=} \frac{X_1}{\sqrt{2}},
\]
hence $\mathcal{L}(X_{1/2})= \mathcal{N}(0, \sigma^2/2)$. Let
$\mathscr{D}= \{
m2^{-n}\dvtx  m,n \in\mathbb N_0, m \leq2^n\}$ by the set of dyadic
numbers in $[0,1]$.
By induction, we obtain $\mathcal{L}(X_t)= \mathcal{N}(0, \sigma^2
t)$ for all
\mbox{$t \in\mathscr{D}$}. For the distribution of the increments, we first obtain
\[
X_1 - X_{1/2} \stackrel{d} {=} \frac{X_1}{\sqrt{2}},
\]
hence $\mathcal{L}(X_1 - X_{1/2})= \mathcal{N}(0, \sigma^2/2)$. Again
inductively, we obtain
$\mathcal{L}(X_1 - X_t)= \mathcal{N}(0,(1-t)\sigma^2)$ for all $t
\in\mathscr{D}$.
Also by induction, it follows
$\mathcal{L}(X_t - X_s)= \mathcal{N}(0,(t-s)\sigma^2)$ for all $s,t
\in\mathscr{D}$
with $s < t$. Finally, continuity of $X$ implies the same property for
all $s,t \in[0,1]$.
It remains to prove independence of increments. Denoting by $X^{(1)},
X^{(2)}, \ldots$ independent distributional copies of $X$, we obtain
from iterating the fixed-point property
\begin{eqnarray*}
&& (X_t)_{t \in[0,1]} 
\\
&&\qquad \stackrel{d} {=} \Biggl( 2^{-n/2}
\sum_{m=1}^{2^n} {\mathbf1}_{\{(m-1)2^{-n} < t \leq m2^{-n} \}}
X_{2^n t - m +1}^{(m)} + {\mathbf1}_{\{ m 2^{-n} < t \}}
X_1^{(m)} \Biggr)_{t \in[0,1]}
\end{eqnarray*}
for all $n \in\mathbb N$. Hence, for any dyadic points $0 \leq t_1 <
t_2 <
\cdots< t_k \leq1$, choosing $n$ large enough, each
$X_{t_{i+1}}-X_{t_i}$ can be
expressed as a function of a subset of $X^{(1)}, \ldots, X^{
(2^n )}$ these subsets being pairwise disjoint for $i=0, \ldots,
n-1$. Since, $\mathscr{D}$ is
dense in $[0,1]$, this shows that $X$ has independent increments. For
$\sigma= 0$, we have $X = \mathbf{0}$ a.s., otherwise
$\sigma^{-1} X$ is a standard Brownian motion.\looseness=-1

The converse direction of the theorem is trivial.
\end{pf}

%
\begin{rem}
Note that we cannot cancel the assumption on continuity of $X$ without
replacement, for example, the process
\[
Y_t = \cases{ W_t, &\quad$t \notin\mathscr{D}$,
\cr
0, &\quad$t \in\mathscr{D}$}
\]
also solves~(\ref{eq4}) and is not a multiple of Brownian motion.
However, it would be sufficient to require c\`{a}dl\`{a}g paths, so
$\mathcal{C}[0,1]
$ could be replaced by $\mathcal{D}[0,1]$ in our statement.
\end{rem}

%
\begin{rem}
Our decomposition of Brownian motion in~(\ref{eq4}) is in\break time. However,
equation~(\ref{fixnormal}) suggests to also investigate a
decomposition in space
%
%
\begin{equation}
\label{raeuml} ( X_t )_{t \in[0,1]}\stackrel{d} {=} \biggl(
\frac{X_t +
\overline{X}_t}{\sqrt{2}} \biggr)_{t \in[0,1]},
\end{equation}
where $(X_t)_{t \in[0,1]}$ and $(\overline{X}_t)_{t \in[0,1]}$ are
independent and identically distributed. Again, equation~(\ref{raeuml})
induces a map on $\mathcal{M}(\mathcal{C}[0,1])$ that is a
contraction in $\zeta_{2+
\varepsilon}$ on the subspace $\mathcal{M}_{2 + \varepsilon}(
\mathcal{L}(W))$, so the
Wiener measure is the only solution in $\mathcal{M}_{2 + \varepsilon
}( \mathcal{L}
(W))$. In this case, we cannot remove the moment assumption as in
Theorem~\ref{bmchar} since
any centered, continuous Gaussian process solves equation (\ref
{raeuml}). Using~(\ref{fixnormal}), it is not hard
to see that there are no further solutions of~(\ref{raeuml}).
\end{rem}

\subsection{Partial match queries in quad trees} \label{secpm}\label{apppm}
In this section, we outline recurrences coming up in the probabilistic
analysis of the performance of data structures and discuss in detail
the use and verification of our conditions (C1)--(C5) and
Theorem~\ref{teomain}. In this example, the full generality of our
setup is needed.

For preprocessing and supporting search queries in multidimensional
data various types of search trees are in use, most prominently quad
trees and $k$-d trees. Among various other fundamental search operations
in multivariate data so-called partial match queries are of particular
importance. For a partial match query, one specifies some of the
components of the data and asks to report all data in the given set
that match the specified components and are arbitrary in the remaining
components. We will subsequently not need to introduce these data
structures and the partial match queries since there is a geometric
reformulation that is discussed and used below. For details about the
computer science background and precise definition of the structures
and queries, see \cite{BrNeSu11}.

Consider a sequence $(U_i,V_i)_{i\ge1}$ of independent and identically
distributed random vectors all with the uniform distribution on the
unit square $[0,1]^2$. We iteratively construct a decomposition of
$[0,1]^2$ as follows. The first point $(U_1,V_1)$ \mbox{decomposes} the square
into four rectangles by drawing the two lines through $(U_1,V_1)$ in
$[0,1]^2$ that are perpendicular to its sides. We call these line
segments the horizontal and vertical lines. The second point
$(U_2,V_2)$ almost surely falls into the interior of one of the four
rectangles. We recursively draw the horizontal and vertical lines
through $(U_2,V_2)$ within the rectangle. Hence, we then have a
decomposition of the original square $[0,1]^2$ into
seven rectangles. Now we iterate this process. After $n-1$ steps, we
have $3(n-1)+1$ rectangles and the $n$th point is used to decompose the
rectangle it falls in into four new rectangle by the horizontal and
vertical lines through it; see Figure~\ref{figconst}. We identify this
decomposition of the unit square with all the line segments drawn and
call it the \emph{decomposition after $n$ steps}.

%
%
\begin{figure}[t]

\includegraphics{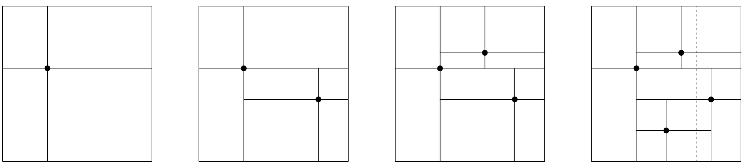}

\caption{The construction of a quad tree at times $n=1, 2, 3, 4$. The
dashed line in the right most square indicates the query line
$x_1=t$.}\label{figconst}
\end{figure}

Now fix $t\in[0,1]$ and
denote the number of horizontal lines in the decomposition after $n$
steps that are cut by the vertical line $x_1=t$ by $C_n(t)$; see
Figures~\ref{figconst}~and~\ref{figct}.
%
%
\begin{figure}

\includegraphics{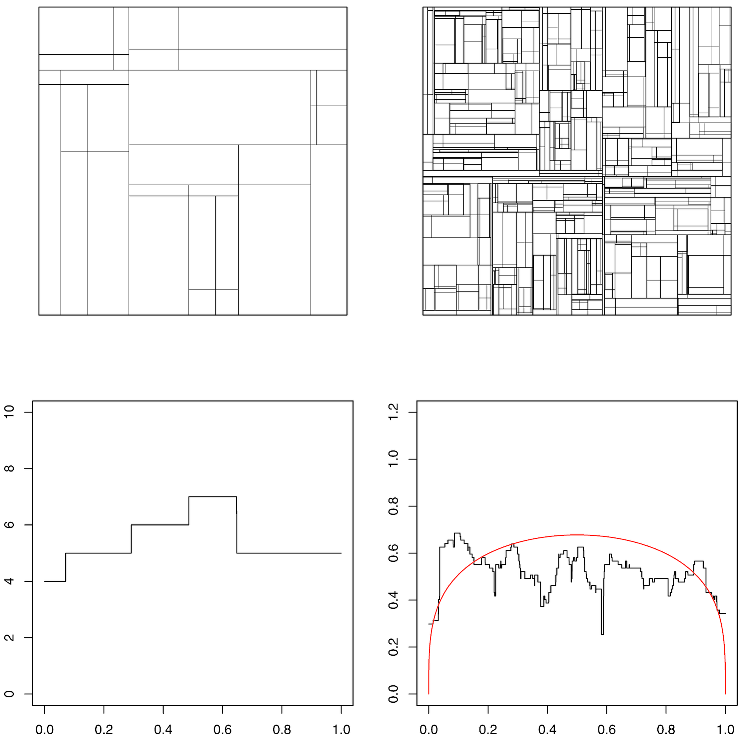}

\caption{Left column: A realization of the decomposition of the unit
square for a quad tree of size $n=10$ and its process $(C_n(t))_{t\in
[0,1]}$. Right column: A realization of the decomposition of the unit
square for a quad tree of size $n=300$ and the process $(X_n(t))_{t\in
[0,1]}$. The smooth curve indicates the function $t \mapsto
(t(1-t))^{\beta/2}$.}
\label{figct}
\end{figure}

In the computer science setting, this is the measure for the complexity
of a partial match query in a random (point) quad tree where the first
component is specified as $t$, the second component is arbitrary and
$n$ data are inserted in the uniform model; see \cite{BrNeSu11}. We
have $C_0(t)=0$ and $C_1(t)=1$ for all $t\in[0,1]$. We consider the
process $(C_n(t))_{t\in[0,1]}$ as a process in $({\mathcal D}[0,1],d_{\mathrm{sk}})$.

For a recursive decomposition of this process, we denote the numbers of
points among the first $n$ points which fall into each
of the\vspace*{1pt} four rectangles generated by the first point $(U_1,V_1)=:(U,V)$
by $I^{(n)}=(I^{(n)}_1,I^{(n)}_2,I^{(n)}_3,I^{(n)}_4)$.
Hence, \mbox{conditionally} on $(U,V)$, the vector $I^{(n)}$ has the
multinomial distribution $M(n-1;UV,U(1-V),(1-U)V,(1-U)(1-V))$,
where a numbering of the four quadrants is used. Moreover,
conditionally on $(U,V)$ and $I^{(n)}$ we have that each point set
within a rectangle is a set of independent and identically distributed
points each with the uniform distribution on the particular rectangle
and that the four
point sets are also independent. Hence, for processes
$(C^{(r)}_j(t))_{t\in[0,1]}$ which are independent and independent of
$(U,V,I^{(n)})$, and $(C^{(r)}_j(t))_{t\in[0,1]}$ distributed as
$(C_j(t))_{t\in[0,1]}$ for $r=1,\ldots,4$ and $j\in\mathbb N_0$ we
obtain the
recurrence
%
%
\begin{eqnarray}
\label{Sulzbachrec1} \bigl(C_n(t) \bigr)_{t\in[0,1]} &\stackrel{d}
{=}& \biggl( 1 + \mathbf{1}_{ \{ t<U
\}} \biggl[C^{(1)}_{I_1^{(n)}}
\biggl(\frac{t} U \biggr) + C^{(2)}_{I_2^{(n)}} \biggl(
\frac{t} U \biggr) \biggr]
\nonumber\\[-9pt]\\[-9pt]
&&\hspace*{5pt}{} + \mathbf{1}_{ \{ t \geq U \} } \biggl[C^{(3)}_{I_3^{(n)}}
\biggl(\frac{t-U}{1-U} \biggr) + C^{(4)}_{I_4^{(n)}} \biggl(
\frac{t-U}{1-U} \biggr) \biggr] \biggr)_{t\in[0,1]}.\nonumber\vadjust{\goodbreak}
\end{eqnarray}
The arguments $t/U$ and $(1-t)/(1-U)$ adjust that a vertical line
$x_1=t$ within the whole square $[0,1]^2$, after scaling, corresponds
to the line $x_1=t/U$ in the left rectangles (if $t<U$) and to the line
$x_1=(1-t)/(1-U)$ in the right rectangles (if $t\ge U$). Note that
equation~(\ref{Sulzbachrec1}) has exactly the form~(\ref{rec1}), where
the indicators and rescalings in time in~(\ref{Sulzbachrec1}) give the
random linear maps $A_r(n)$ for $r=1,\ldots,4$, and we have $b(n)=1$.

The first asymptotic analysis of this process was done by Flajolet et
al. \cite{SulzbachFGPR1993}, where the one-dimensional averaged
complexity $C_n(\xi)$ was considered with $\xi$ uniformly distributed
on $[0,1]$ and independent of the sequence $(U_i,V_i)_{i\in\mathbb
N}$. In
\cite{SulzbachFGPR1993}, is shown that, as $n\to\infty$,
\[
\mathbb{E} \bigl[C_n(\xi) \bigr]\sim\kappa n^{\beta}\qquad
\mbox{with } \kappa= \frac{\Gamma(2\beta+2)}{2(\Gamma(\beta+1))^3},  \beta=\frac{\sqrt{17}-3}2,
\]
where $\Gamma$ denotes the gamma function; see also Chern and Hwang
\cite{SulzbachChHw2003} for more refined analysis of this expectation.
Recently, Curien and Joseph \cite{SulzbachCuJo2010} showed
%
%
\begin{equation}
\label{SulzbachconstCJ} \qquad\mathbb{E} \bigl[C_n(t) \bigr]\sim\chi \bigl(t(1-t)
\bigr)^{\beta/2} n^{\beta}\qquad\mbox{with } \chi= \frac{\kappa}{B ( ({\beta}/{2}) +1, ({\beta}/{2}) +1 )},
\end{equation}
where $B( \cdot, \cdot)$ denotes the beta function (Euler
integral). The analysis beyond expectations, in particular of variances
and limit laws either for the
process $(C_n(t))_{t\in[0,1]}$ itself or its marginals or the averaged
complexity $C_n(\xi)$ or the worst case complexity $\sup_{t\in[0,1]}
C_n(t)$ remained open.

We now discuss how our general framework from Section~\ref{nnnCM} can
be applied to a proper normalization of $(C_n(t))_{t\in[0,1]}$ and
highlight the use and verification of conditions (C1)--(C5),
which can be shown to hold with the choice $s=2$. The details are
worked out in \cite{BrNeSu11}. The resulting functional limit law
allows to settle the open questions raised in the previous paragraph.

Let us first use the normalization $X_0(t):=0$ and
%
%
\begin{equation}
\label{13norm} X_n(t):= \frac{C_n(t)}{\chi n^\beta}, \qquad n\ge1, t
\in[0,1]
\end{equation}
and write $X_n:=(X_n(t))_{t\in[0,1]}$. See Figure~\ref{figct} for a
simulation of $X_n$.
For $X_n$, we obtain the recurrence
\begin{eqnarray*}
X_n &\stackrel{d} {=}& \biggl( \frac{1}{\chi n^\beta} +
\mathbf{1}_{
\{ t<U
\} } \biggl[ \biggl(\frac{I_1^{(n)}}{n} \biggr)^\beta
X^{(1)}_{I_1^{(n)}} \biggl(\frac{t} U \biggr) + \biggl(
\frac
{I_2^{(n)}}{n} \biggr)^\beta X^{(2)}_{I_2^{(n)}}
\biggl(\frac{t} U \biggr) \biggr]
\\[-1pt]
&&\hspace*{5pt}{} + \mathbf{1}_{ \{ t \geq U \} } \biggl[ \biggl(\frac
{I_3^{(n)}}{n}
\biggr)^\beta X^{(3)}_{I_3^{(n)}} \biggl(\frac{t-U}{1-U}
\biggr) + \biggl(\frac{I_4^{(n)}}{n} \biggr)^\beta X^{(4)}_{I_4^{(n)}}
\biggl(\frac{t-U}{1-U} \biggr) \biggr] \biggr)_{t\in[0,1]}
\end{eqnarray*}
with assumptions on independence and identical distributions as in
(\ref{Sulzbachrec1}). This suggests that a
limit process $X=(X(t))_{t \in[0,1]}$ satisfies
%
%
\begin{eqnarray}
\label{sulzbachlimit} X &\stackrel{d} {=} & \biggl( \mathbf{1}_{ \{ t<U
\} }
\biggl[(UV)^\beta X^{(1)} \biggl(\frac{t} U \biggr)+
\bigl(U(1-V) \bigr)^\beta X^{(2)} \biggl(\frac{t} U
\biggr) \biggr]\nonumber
\\
&&\hspace*{5pt}{}+ \mathbf{1}_{ \{ t \geq U \} } \biggl[ \bigl((1-U)V \bigr)^\beta
X^{(3)} \biggl(\frac{t-U}{1-U} \biggr)
\\
&&\hspace*{72pt}{} + \bigl((1-U) (1-V)
\bigr)^\beta X^{(4)} \biggl(\frac
{t-U}{1-U} \biggr) \biggr]
\biggr)_{t \in[0,1]},\nonumber
\end{eqnarray}
where $U$ and $V$ are independent $[0,1]$-uniform random variables and\break 
$(X^{(r)}(t))_{t\in[0,1]}$, for $r=1,\ldots, 4$, are independent copies
of the process $X$, also
independent of $(U,V)$. Note that~(\ref{sulzbachlimit}) is a
fixed-point equation of type~(\ref{fix2}).

This heuristic derivation of equation~(\ref{sulzbachlimit}) can be
turned into a rigorous approach as follows. First, note that the
operators $A^{(n)}_1$ and $A_1$ on ${\mathcal D}[0,1]$ are given as
follows: for $f\in{\mathcal D}[0,1]$, the
random functions $A^{(n)}_1(f)$ and $A_1(f)$ are
%
%
\begin{equation}
t \mapsto\mathbf{1}_{ \{ t<U \} } \biggl(\frac{I_1^{(n)}}{n}
\biggr)^\beta f \biggl(\frac{t} U \biggr)\quad\mbox{and}\quad t
\mapsto\mathbf{1}_{ \{ t<U \} } (UV)^\beta f \biggl(\frac{t} U
\biggr)
\end{equation}
and direct integration shows that condition~(C2) is satisfied for
the choice $s=2$.

For condition~(C3) first an appropriate process $X=(X(t))_{t \in
[0,1]}$ which solves~(\ref{sulzbachlimit}) has to be constructed. Since
we do not know the completeness of $\zeta_2$ on an appropriate subspace
of ${\mathcal M}_2({\mathcal D}[0,1])$ and also are not able to guess
$X$ as a well-known process (as in the example in Section~\ref{donskerproof}) such a process $X$ has to be constructed individually.
In view of~(\ref{SulzbachconstCJ}), the normalization~(\ref{13norm}),
the choice $s=2$ and Lemma~\ref{matchmoments} we additionally need to
have $\mathbf{E}[\|X\|_\infty^2]<\infty$ and $\mathbf{E}[X(t)]=
(t(1-t))^{\beta/2}$ for $t\in[0,1]$.
In \cite{BrNeSu11}, a sequence of random
continuous functions is constructed from a discrete recurrence
approximating~(\ref{sulzbachlimit}) which converges uniformly. The
construction uses concentration inequalities and tail bounds for the
saturation level of random quad trees. Its limit $X$ is the stochastic
process as needed. Moreover, it can also be shown that it has
continuous paths almost surely.

Our normalization does not imply that $\mathcal{L} ( X_n )
\in
\mathcal{M}_2(\mathcal{L}(X))$ for all $n \geq1$, since the
normalization in
(\ref{13norm}) does violate condition~(\ref{zeil2}). Thus, the
processes $X_n$ cannot be compared with $X$ using the $\zeta_2$
distance. To overcome this technical issue, one can instead consider
the normalization
%
%
\begin{equation}
\label{030413} \frac{C_n(t)- \mathbf{E} [C_n(t) ]}{\chi n^\beta}, \qquad t\in[0,1],   n\ge1
\end{equation}
and the shifted limit $(X(t) - (t(1-t))^{\beta/2})_{t\in[0,1]}$. Then
condition~(C3) is satisfied. This also shows the necessity to allow
the perturbation $h_n$ in Corollary~\ref{Cor4} and condition~(C5)
in our general setup.
The centering of the sequence $X_n$ and the solution $X$ only affects
the additive term $b^{(n)}$ and the toll term $b$. In particular,
condition~(C2) remains valid in the centered setting and we have
$\| A^{(n)}_1 - A_1 \|_s \to0$ for any $s>0$. Similarly, $\| A^{(n)}_r
- A_r \|_s \to0$ for $r=2,3,4$. Convergence of the additive term
$b^{(n)}$ is equivalent to uniformity of the expansion in (\ref
{SulzbachconstCJ}). This is shown in \cite{BrNeSu11}. It is also easily
seen that~(\ref{inicond}) holds, hence condition~(C1) is true.

For condition~(C4), an appropriated rate of convergence of the
coefficients in~(\ref{eqrate1}) is needed. Note that such a rate can
only be derived if a rate in the asymptotic expansion of the means in
(\ref{SulzbachconstCJ}) is available. Hence, as a technical step in
\cite{BrNeSu11} a polynomial additive error term of the order $O(n^{\beta-\varepsilon})$ for some $\varepsilon>0$ is shown to hold
valid uniformly in $t\in[0,1]$. This implies that the convergence rates
$\gamma(n)$ in~(\ref{eqrate1}) satisfy
$\gamma(n)=O(n^{-\varepsilon})$ as $n\to\infty$. Hence, for
the sequence $(R(n))_{n\ge1}$ in condition~(C4) we can
choose $R(n)=n^{-\varepsilon'}$ with $0<\varepsilon'\le\varepsilon$
sufficiently small such that we obtain $L^\ast<1$ in (C4).

Finally, note that the jumps of your piecewise constant processes
$X_n$ occur at the random times $U_1,\ldots,U_n$ so that interval
lengths between consecutive jumps may become arbitrarily small.
Condition (C5) allows to cover such instances of processes if the
probability for close jumps can be controlled. In our example, it is
easy to see that the smallest interval between jumps is of length at
least $n^{-3}$ with probability of order $O(1/n)$. Hence,
condition~(C5) is satisfied with the choice $r_n=n^{-3}$ there.
Moreover, the sequences $(r_n)_{n\in\mathbb N}$ and $(R(n))_{n\in
\mathbb N}$ are
chosen such that condition~(\ref{condrate}) is fulfilled.
Hence,
our main result Theorem~\ref{teomain} applies and we first obtain
distributional convergence of the centered normalized sequence in
(\ref{030413}) which also implies
\[
X_n \stackrel{d} {\longrightarrow} X
\]
in $({\mathcal D}[0,1], d_{\mathrm{sk}})$. Here, we may also apply
Theorem~\ref{teosup} to infer convergence of moments of $\|X_n \|$ toward the
moments of $\|X\|$.

The use of some other search trees to support partial match queries
leads to distributional recurrences related to~(\ref{Sulzbachrec1}),
for example, the $2$-d-trees. For the application of our framework in
this case, see \cite{BrNeSu11}.

\section*{Acknowledgements}
We thank Svante Janson for his invaluable support during various stages
of our research on a functional version of the contraction method, in
particular, for bringing the paper \cite{Barbour90} to our attention,
for his help clarifying the conditions in Lemma~\ref{matchmoments} and
many detailed comments on a first version of this paper. We also thank
Alfredas Ra{\v{c}}kauskas for commenting on the counterexamples
discussed in \cite{BeRa84} and three anonymous referees for
constructive comments.




%

\printaddresses

\end{document}